\font\dixmath=cmsy10
\def\emptyset{\varnothing}
\def\leq{\leqslant}
\def\geq{\geqslant}
\newcommand{\ssstate}[3]{\node[draw,circle,inner sep=0,minimum size=24,line width=0.8] (#1) at #2 {$\scriptstyle#3$};}
\newcommand{\sstate}[3]{\node[draw,circle,inner sep=0,minimum size=18,line width=0.8] (#1) at #2 {$#3$};}
\newcommand{\state}[4]{\sstate{#1}{#2}{\substack{#3\\#4}}}
\newcommand{\myloop}[4]{\path[->,>=latex,line width=0.8] (#1) edge [out=#3+25,in=#3-25,distance=20] node [label={[label distance=#4]#3:$\scriptstyle#2$}] {} (#1);}
\newcommand{\myloops}[5]{\path[->,>=latex,line width=0.8] (#1) edge [out=#3+25,in=#3-25,distance=20] (#1); \draw[-] (#4,#5) node{$\scriptstyle #2$};}
\newcommand{\myloopg}[4]{\path[color=gray,->,>=latex,line width=0.8] (#1) edge [out=#3+25,in=#3-25,distance=20] node [label={[label distance=#4]#3:$\scriptstyle#2$}] {} (#1);}
\newcommand{\initstate}[2]{\draw[<-,>=latex,line width=0.8] (#1.center) ++ (#2:9pt) --++ (#2:20pt);}
\newcommand{\initstatea}[3]{\draw[<-,>=latex,line width=0.8] (#1.center) ++ (#2:9pt) --++ (#2:20pt) node[above]{$\scriptstyle#3$};} 
\newcommand{\initstateb}[3]{\draw[<-,>=latex,line width=0.8] (#1.center) ++ (#2:9pt) --++ (#2:20pt) node[below]{$\scriptstyle#3$};} 
\newcommand{\initstatel}[3]{\draw[<-,>=latex,line width=0.8] (#1.center) ++ (#2:9pt) --++ (#2:20pt) node[left]{$\scriptstyle#3$};} 
\newcommand{\initstateld}[3]{\draw[<-,>=latex,line width=0.8,dashed] (#1.center) ++ (#2:9pt) --++ (#2:20pt) node[left]{$\scriptstyle#3$};} 
\newcommand{\initstatelg}[3]{\draw[<-,>=latex,line width=0.8,color=gray] (#1.center) ++ (#2:9pt) --++ (#2:20pt) node[left]{$\scriptstyle#3$};}
\renewcommand{\aa}[2]{a_{#1,#2}}
\newcommand{\aaa}[2]{a_{#1\hspace{-0.8pt},\hspace{-0.4pt}#2}}
\newcommand{\BB}[1]{B_{#1}}
\newcommand{\bidual}{^{\hspace{-0.05em}\raise0.6pt\hbox{$\scriptscriptstyle+$}\hspace{-0.05em}*}}
\newcommand{\bidualr}{^{*\hspace{-0.05em}\raise0.6pt\hbox{$\scriptscriptstyle+$}\hspace{-0.05em}}}
\newcommand{\BKL}[1]{B_{#1}\bidual}
\newcommand{\SPBKL}[1]{A_{#1}}
\newcommand{\SBKL}[1]{A_{#1}^\pm}
\newcommand{\BP}[1]{B_{#1}\plusexp}
\renewcommand{\bar}[1]{\text{bar}(#1)}
\newcommand{\br}{\beta}
\newcommand{\brbr}{\gamma}
\newcommand{\brr}{\beta'}
\newcommand{\equal}\equiv
\newcommand{\ff}[1]{\phi_{\hspace{-0.5pt}\raise-1pt\hbox{$\scriptstyle #1$}}}
\newcommand{\fl}[1]{\Phi_{\hspace{-1pt}\raise-0.5pt\hbox{$\scriptstyle #1$}}}
\renewcommand{\ge}{\geqslant}
\newcommand{\ie}{\emph{i.e.}}
\newcommand{\inv}{^{\minus\hspace{-0.1em}1}}
\newcommand{\last}[1]{{#1}^{\scriptscriptstyle\mathtt{\#}}}
\newcommand{\lcmL}{\vee_{\hspace{-0.2em}\raise1pt\hbox{\(\scriptscriptstyle L\)}}}
\newcommand{\lcmR}{\vee_{\hspace{-0.2em}\raise1pt\hbox{\(\scriptscriptstyle R\)}}}
\newcommand{\Ldots}{...\,} 
\renewcommand{\le}{\leqslant}
\renewcommand{\mod}{\ \text{mod}\ }
\newcommand{\minus}{\mathchoice{-}{-}{\raise0.7pt\hbox{$\scriptscriptstyle-$}\scriptstyle}{-}}
\newcommand{\NN}{\mathbb{N}}
\newcommand{\newintegeri}[2]{
  \expandafter\def\csname #1\endcsname{#2}
  \expandafter\def\csname #1o\endcsname{{#2\minus1}}
  \expandafter\def\csname #1t\endcsname{{#2\minus2}}
  \expandafter\def\csname #1p\endcsname{{#2\plus1}}
  \expandafter\def\csname #1pp\endcsname{{#2\plus2}}}
\newcommand{\newintegerii}[1]{\newintegeri{#1#1}{#1}}
\newcommand{\plus}{\mathchoice{+}{+}{\raise0.7pt\hbox{$\scriptscriptstyle+$}\scriptstyle}{+}}
\newcommand{\plusminus}{\mathchoice{\pm}{\pm}{\raise0.7pt\hbox{$\scriptscriptstyle\pm$}\scriptstyle}{\pm}}
\newcommand{\plusexp}{^{\raise0.8pt\hbox{$\hspace{-0.05em}\scriptscriptstyle+$}}}
\newcommand{\rev}{\curvearrowright}
\newcommand{\rnf}{r}
\newcommand{\sh}[1]{#1^{\hspace{-0.2pt}\raise0.6pt\hbox{$\scriptscriptstyle+$}}}
\newcommand{\sig}[1]{\sigma_{\!#1}} 
\newcommand{\siginv}[1]{\sigma_{\!#1}^{\hspace{-0.05em}\raise0.8pt\hbox{$\scriptscriptstyle-$}\hspace{-0.1em}1}}
\newcommand{\sigpm}[1]{\sigma_{\!#1}^{\hspace{-0.05em}\raise0.8pt\hbox{$\scriptscriptstyle\pm$}\hspace{-0.1em}1}}
\newcommand{\uu}{u}
\newcommand{\vv}{v}
\newcommand{\ww}{w}
\newcommand{\www}{\ww'}
\newcommand{\xx}{x}
\newcommand{\yy}{y}
\newcommand{\RNF}[1]{R_{#1}}
\title{The rotating normal form is regular}
\author{Jean Fromentin}
\theoremstyle{definition}
\newtheorem{defi}{Definition}[section]
\newtheorem{exam}[defi]{Example}
\theoremstyle{plain}
\newtheorem{lemm}[defi]{Lemma}
\newtheorem{prop}[defi]{Proposition}
\newtheorem{thrm}[defi]{Theorem}
\newtheorem{coro}[defi]{Corollary}
\subjclass[2010]{20F36, 20M35, 20F10}
\keywords{dual braid monoid, rotating normal form, regular language, automata}
\begin{document}
\maketitle

\begin{abstract}
  Defined on  Birman--Ko--Lee monoids, the rotating normal form has strong 
  connections with the Dehornoy's braid ordering.
  It can be seen as a process for selecting between all the 
  representative words of a Birman--Ko--Lee braid a particular one, called \emph{rotating} word.
  In this paper we construct, for all $n\geq 2$, a finite state automaton which recognizes
  the rotating words on $n$ strands.
  As a consequence the language of rotating words on $n$ strands is proved 
  to be regular for any $n\geq 2$.
\end{abstract}

\section{Introduction}
Originally, the group~$\BB\nn$ of $\nn$-strand braids was defined as the group 
of isotopy classes of $\nn$-strand geometric braids.
An algebraic presentation of~$\BB\nn$ was given by E.~Artin in \cite{Artin1925}
\begin{equation}
  \label{E:BnPresentation}
  \left<\sig1,\Ldots,\sig\nno \left| 
      \begin{array}{cl}
        \sig\ii\sig\jj\,=\,\sig\jj\sig\ii & \text{for $|\ii\minus\jj|\ge2$}\\
        \sig\ii\sig\jj\sig\ii\,=\,\sig\jj\sig\ii\sig\jj & \text{for $|\ii\minus\jj|=1$} 
      \end{array}\right.\right>.
\end{equation}
An $\nn$-strand braid is an equivalence class consisting of (infinitely many)
words in the letters $\sig\ii^{\pm1}$. 
The standard correspondence between elements of the presented group $\BB\nn$ and 
geometric braids consists in using $\sig\ii$ as a code for the geometric braid where only
the $\ii$th and the $(\iip)$st strands cross, with the strands originally at position
$(\iip)$ in front of the other.

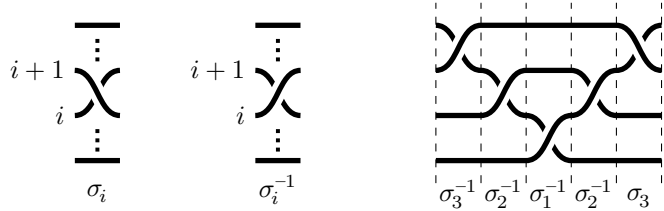
\begin{figure}[ht!]
  \begin{center}
    \begin{tikzpicture}[x=0.06cm,y=0.06cm]
      \draw[line width=2](0,0) -- (10,0);
      \draw[line width=2](0,10) .. controls (5,10) and (5,20) .. (10,20);
      \draw[line width=2](0,20) .. controls (5,20) and (5,10) .. (10,10);
   \draw[line width=6,color=white](0,20) .. controls (5,20) and (5,10) .. (10,10);
         \draw[line width=2](0,20) .. controls (5,20) and (5,10) .. (10,10);
      \draw[line width=2](0,30) -- (10,30);
      \draw[line width=1.5,dotted](5,2) -- (5,8); 
      \draw[line width=1.5,dotted](5,22) -- (5,28); 
      \draw(0,10) node[left]{\small$\ii$};
      \draw(0,20) node[left]{\small$\iip$};
      \draw(5,-3) node[below]{\small$\sig\ii$};
      \begin{scope}[shift={(40,0)}]
        \draw[line width=2](0,0) -- (10,0);
        \draw[line width=2](0,10) .. controls (5,10) and (5,20) .. (10,20);
        \draw[line width=2](0,20) .. controls (5,20) and (5,10) .. (10,10);
        \draw[line width=6,color=white](0,10) .. controls (5,10) and (5,20) .. (10,20);
        \draw[line width=2](0,10) .. controls (5,10) and (5,20) .. (10,20);
        \draw[line width=2](0,30) -- (10,30);
        \draw[line width=1.5,dotted](5,2) -- (5,8); 
        \draw[line width=1.5,dotted](5,22) -- (5,28); 
        \draw(0,10) node[left]{\small$\ii$};
        \draw(0,20) node[left]{\small$\iip$};
        \draw(5,-1) node[below]{\small$\siginv\ii$};
      \end{scope}
      \begin{scope}[shift={(80,0)}]
        \draw[line width=2](0,30) .. controls (5,30) and (5,20) .. (10,20) .. controls (15,20) and (15,10) .. (20,10) .. controls (25,10) and (25,0) .. (30,0) -- (50,0);
        \draw[line width=6,color=white](0,10) -- (10,10) .. controls (15,10) and (15,20) .. (20,20) -- (30,20) .. controls (35,20) and (35,10) .. (40,10) -- (50,10);
        \draw[line width=2](0,10) -- (10,10) .. controls (15,10) and (15,20) .. (20,20) -- (30,20) .. controls (35,20) and (35,10) .. (40,10) -- (50,10);
        \draw[line width=6,color=white] (0,0) -- (20,0) .. controls (25,0) and (25,10) .. (30,10) .. controls (35,10) and (35,20) .. (40,20) .. controls (45,20) and (45,30) .. (50,30);
        \draw[line width=2] (0,0) -- (20,0) .. controls (25,0) and (25,10) .. (30,10) .. controls (35,10) and (35,20) .. (40,20) .. controls (45,20) and (45,30) .. (50,30);
        \draw[line width=6,color=white] (0,20) .. controls (5,20) and (5,30) .. (10,30) -- (40,30) .. controls (45,30) and (45,20) .. (50,20);
        \draw[line width=2] (0,20) .. controls (5,20) and (5,30) .. (10,30) -- (40,30) .. controls (45,30) and (45,20) .. (50,20);
        \draw[dashed](0,-5) -- (0,35);
        \draw[dashed](10,-5) -- (10,35);
        \draw[dashed](20,-5) -- (20,35);
        \draw[dashed](30,-5) -- (30,35);
        \draw[dashed](40,-5) -- (40,35);
        \draw[dashed](50,-5) -- (50,35);
        \draw(5,-2) node[below]{\small$\siginv3$};
        \draw(15,-2) node[below]{\small$\siginv2$};
        \draw(25,-2) node[below]{\small$\siginv1$};
        \draw(35,-2) node[below]{\small$\siginv2$};
        \draw(45,-4) node[below]{\small$\sig3$};
      \end{scope}
    \end{tikzpicture}
    
  \end{center}
  \caption{Interpretation of a word in the letters $\sigpm\ii$ as a geometric braid diagram.}
\end{figure}

In 1998, J.S. Birman, K.H. Ko, and S.J. Lee~\cite{Birman1998} introduced and 
investigated for each~$\nn$ a submonoid~$\BKL\nn$ of~$\BB\nn$, which is known as the \emph{Birman--Ko--Lee} monoid. 
The name~``\emph{dual braid monoid}'' was subsequently proposed because
several numerical parameters obtain symmetric values when they are evaluated on the 
positive braid monoid~$\BP\nn$ and on~$\BKL\nn$, a correspondence that was extended to the more general context of
Artin--Tits groups by D. Bessis~\cite{Bessis2003} in 2003.
The dual braid monoid~$\BKL\nn$ is the submonoid of~$\BB\nn$ generated by 
the braids~$\aa\ii\jj$ with $1\le\ii<\jj\le\nn$, where $\aa\ii\jj$ is defined by 
$\aa\ii\jj=\sig\ii\,...\,\sig\jjo\ \sig\jj\ \siginv\jjo\,...\,\siginv\ii$. 
In geometrical terms, the braid~$\aa\ii\jj$ corresponds to a crossing of the
$\ii$th and $\jj$th strands, both passing behind the (possible) intermediate strands. 

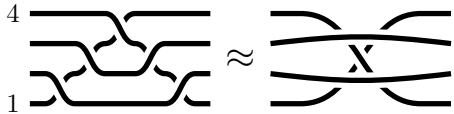
\begin{figure}[ht!]
  \begin{center}
    \begin{tikzpicture}[x=0.04cm,y=0.04cm]
      \draw[line width=2] (-5,0) -- (0,0) .. controls (5,0) and (5,10) .. (10,10) .. controls (15,10) and (15,20) .. (20,20) .. controls (25,20) and (25,30) .. (30,30) -- (55,30); 
      \draw[line width=6,color=white] (-5,30) -- (20,30) .. controls (25,30) and (25,20) .. (30,20) .. controls (35,20) and (35,10) .. (40,10) .. controls (45,10) and (45,0) .. (50,0) -- (55,0); 
      \draw[line width=2] (-5,30) -- (20,30) .. controls (25,30) and (25,20) .. (30,20) .. controls (35,20) and (35,10) .. (40,10) .. controls (45,10) and (45,0) .. (50,0) -- (55,0); 
      \draw[line width=6,color=white] (-5,10) -- (0,10) .. controls (5,10) and (5,0) .. (10,0) -- (40,0) .. controls (45,0) and (45,10) .. (50,10) -- (55,10);
      \draw[line width=2] (-5,10) -- (0,10) .. controls (5,10) and (5,0) .. (10,0) -- (40,0) .. controls (45,0) and (45,10) .. (50,10) -- (55,10);
      \draw[line width=6,color=white] (-5,20) -- (10,20) .. controls (15,20) and (15,10) .. (20,10) -- (30,10) .. controls (35,10) and (35,20) .. (40,20) -- (55,20);
      \draw[line width=2] (-5,20) -- (10,20) .. controls (15,20) and (15,10) .. (20,10) -- (30,10) .. controls (35,10) and (35,20) .. (40,20) -- (55,20);
      \draw(-5,0) node[left]{\small$1$};
      \draw(-5,30) node[left]{\small$4$};
      \draw(65,15) node{\Large$\approx$};
      \begin{scope}[shift={(75,0)}]
        \draw[line width=2](0,0) -- (10,0) .. controls (30,0) and (30,30) .. (50,30) -- (60,30); 
        \draw[line width=6,color=white](0,30) -- (10,30) .. controls (30,30) and (30,0) .. (50,0) -- (60,0); 
        \draw[line width=2](0,30) -- (10,30) .. controls (30,30) and (30,0) .. (50,0) -- (60,0); 
        \draw[line width=6,color=white](0,10) .. controls (25,7) and (35,7) .. (60,10);
        \draw[line width=2](0,10) .. controls (25,7) and (35,7) .. (60,10);
        \draw[line width=6,color=white](0,20) .. controls (25,23) and (35,23) ..  (60,20);
        \draw[line width=2](0,20)  .. controls (25,23) and (35,23) ..  (60,20);
      \end{scope}
    \end{tikzpicture}
  \end{center}
  \caption{In the geometric braid \(\aa14\), the strands  \(1\) and \(4\) cross under the strands  \(2\) and \(3\).}
\end{figure}

By definition, $\sig\ii$ equals $\aa\ii\iip$ and, therefore, the positive braid 
monoid~$\BP\nn$ is included in the monoid~$\BKL\nn$, a proper inclusion 
for~$\nn \ge 3$ since the braid $\aa13$ does not belong to the monoid~$\BP3$.

We denote by $\SPBKL\nn$  the set $\{\aa\indi\indii\,|\,1\leq\indi<\indii\leq\nn\}$.
The following presentation of the monoid $\BKL\nn$ is given in \cite{Birman1998}.

\begin{prop}
  \label{P:PresBKL}
 The monoid~$\BKL\nn$ is presented by generators $\SPBKL\nn$ and relations
  \begin{align}
    \label{E:DualCommutativeRelation}
    \aa\indi\indii\aa\indiii\indiv&= \aa\indiii\indiv\aa\indi\indii \text{\quad for $[\indi, \indii]$ and $[\indiii, \indiv]$ disjoint or nested},\\
    \label{E:DualNonCommutativeRelation}
    \aa\indi\indii\aa\indii\indiii&= \aa\indii\indiii\aa\indi\indiii =\aa\indi\indiii\aa\indi\indii \text{\quad for $1 \le \indi<\indii<\indiii\le\nn$}.
  \end{align}
\end{prop}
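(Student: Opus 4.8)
Let $M$ denote the monoid \emph{abstractly} presented by the generating set $\SPBKL\nn$ and the relations \eqref{E:DualCommutativeRelation} and \eqref{E:DualNonCommutativeRelation}; the task is to show that $M$ is isomorphic to $\BKL\nn$. First, the braids $\aa\ii\jj\in\BB\nn$ do satisfy \eqref{E:DualCommutativeRelation} and \eqref{E:DualNonCommutativeRelation}: expanding the definition $\aa\ii\jj=\sig\ii\,\cdots\,\sig\jjo\ \sig\jj\ \siginv\jjo\,\cdots\,\siginv\ii$, each identity becomes a finite consequence of the Artin relations~\eqref{E:BnPresentation}, and I would organise this verification according to whether the two intervals at play are disjoint, nested, or share exactly one endpoint. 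Hence there is a surjective monoid homomorphism $\pi\colon M\twoheadrightarrow\BKL\nn$ carrying each generator to the braid of the same name, and everything reduces to the injectivity of $\pi$.

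For injectivity, the plan is to prove that $M$ is a Garside monoid with group of fractions $\BB\nn$. The presentation \eqref{E:DualCommutativeRelation}--\eqref{E:DualNonCommutativeRelation} is homogeneous (all relations preserve length), so $M$ is atomic and noetherian, and it is \emph{complemented}: for any two distinct generators $a$ and $b$ there is at most one defining relation of the form $a\,u=b\,v$, which defines a right-complement $f(a,b)=v$. Running Dehornoy's subword-reversing process with $f$ then gives that $M$ is left-cancellative and that any two elements of $M$ with a common right-multiple have a right-lcm, provided $f$ satisfies the cube condition on every triple of generators; the symmetric argument on the other side gives right-cancellativity and conditional left-lcms. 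The braid $\delta_\nn=\aa12\,\aa23\cdots\aa\nno\nn$ is a left- and right-multiple of every generator and has only finitely many left- and right-divisors in $M$, hence is a Garside element, so $M$ is a Garside monoid and in particular embeds in its group of fractions $G(M)$. A chain of Tietze transformations---substituting $\sig\ii\,\cdots\,\sig\jjo\ \sig\jj\ \siginv\jjo\,\cdots\,\siginv\ii$ for each band generator $\aa\ii\jj$ and $\aa\kk\kkp$ for each $\sig\kk$---turns the group presented by $\SPBKL\nn$ and relations \eqref{E:DualCommutativeRelation}--\eqref{E:DualNonCommutativeRelation} into Artin's presentation~\eqref{E:BnPresentation}, so $G(M)\cong\BB\nn$ compatibly with the generators. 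Under this identification $\pi$ is the restriction to $M$ of the embedding $M\hookrightarrow G(M)=\BB\nn$; therefore $\pi$ is injective and $M\cong\BKL\nn$.

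The step I expect to be the genuine obstacle is the cube condition for $f$. Concretely, for each ordered triple of generators one must check that the two admissible sequences of right-reversings produce the same positive word---morally, that the partial right-lcm operation encoded by \eqref{E:DualCommutativeRelation}--\eqref{E:DualNonCommutativeRelation} is associative where it is defined. This is a finite case analysis, but a delicate one: the qualitatively distinct configurations of three intervals of $[1,\nn]$ (three pairwise disjoint; one nested in another with the third variously placed; two sharing an endpoint; a chain $\indi<\indii<\indiii$; and so on) are numerous, and essentially every nontrivial case is driven by the triangle relations $\aa\indi\indii\aa\indii\indiii=\aa\indii\indiii\aa\indi\indiii=\aa\indi\indiii\aa\indi\indii$. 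I expect all cases to close, the conceptual reason being the planarity of the circular noncrossing-partition model in which $\aa\indi\indii$ is the chord joining $\indi$ to $\indii$, but the bookkeeping is where the work lies. A shorter route, if one prefers to avoid this computation, is to invoke Birman--Ko--Lee directly: the band-generator presentation of $\BB\nn$ given in \cite{Birman1998} consists, as a group presentation, of exactly the relations \eqref{E:DualCommutativeRelation}--\eqref{E:DualNonCommutativeRelation}, while Bessis's general treatment of dual braid monoids~\cite{Bessis2003} supplies the cancellativity and Garside structure of $M$; only the compatibility of the isomorphism $G(M)\cong\BB\nn$ with $\pi$ then remains to be checked.
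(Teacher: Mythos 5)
The paper offers no proof of this proposition: it is quoted verbatim from \cite{Birman1998} (``The following presentation of the monoid $\BKL\nn$ is given in \cite{Birman1998}''), so the shortcut you mention at the end --- invoking Birman--Ko--Lee for the presentation and Bessis for the Garside structure --- is exactly the paper's treatment. Your self-contained sketch follows the standard architecture (surjection $\pi\colon M\twoheadrightarrow\BKL\nn$, Garside structure on $M$, embedding into the group of fractions, identification of that group with $\BB\nn$ by Tietze moves), and that architecture is the right one.

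There is, however, a concrete gap in the step you single out as the main technical work. You propose to run subword reversing with the complement $f$ read off directly from relations \eqref{E:DualCommutativeRelation}--\eqref{E:DualNonCommutativeRelation} and you locate the difficulty in the cube condition; but the prior difficulty is that for $\nn\geq4$ this $f$ is not total. For a crossing pair such as $(\aa13,\aa24)$ in $\BKL4$, no defining relation has $\aa13$ and $\aa24$ as the two distinguished first (or last) letters of its two sides, so $f(\aa13,\aa24)$ is undefined and the reversing of the corresponding mixed word halts immediately. Since $\aa13$ and $\aa24$ must admit common multiples if $M$ is to be $\BKL4$, reversing with this partial complement cannot be complete, and in fact the cube condition cannot hold for all triples of generators of this presentation --- otherwise completeness would force $\aa13$ and $\aa24$ to have no common right-multiple in $M$. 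So carried out honestly, your case analysis would not ``close''; it would fail. The paper makes precisely this observation at the start of Section~3 and repairs it by adjoining derived relations such as $\aa23\aa14\,\aa13\equiv\aa34\aa12\,\aa24$ to obtain a total complement (Lemma~\ref{L:Comp}). Your argument needs the same repair on the relevant side: (a) enlarge the relation set to a total complement for every pair of generators, (b) check that the added relations are consequences of the original ones, so the presented monoid is unchanged, and only then (c) verify the cube condition for the enlarged presentation. With that correction the Garside argument goes through and the rest of your proof (Ore embedding, $G(M)\cong\BB\nn$, injectivity of $\pi$) is sound.
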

The integral interval $[p,q]$ is said to be \emph{nested} in $[r,s]$ if the relation $r<p<q<s$ holds.

Since~\cite{Bessis2003} and \cite{Birman1998} it is known that the dual braid monoid~$\BKL\nn$ admits a 
Garside structure whose simple elements are in bijection with the non-crossing 
partitions of $\nn$.
In particular, there exists a normal form associated with this Garside
structure, the so-called greedy normal form.

The rotating normal form is another  normal form on $\BKL\nn$, it was
introduced in \cite{Fromentin2008,Fromentin2008a}.
Roughly speaking, for every braid $\br\in\BKL\nn$ the rotating normal form picks up a 
unique representative word on the letters~$\SPBKL\nn$ among all of these representing~$\br$. 
It can be see as a map $r_n$ from the dual braid monoid $\BKL\nn$ 
to the set of words~$\SPBKL\nn^\ast$.
The language of all $n$-rotating words, denoted by~$R_n$ is then 
the image of~$\BKL\nn$ under the map $r_n$.

The aim of this paper is to construct for all $n\geq 2$ an explicit finite state 
automaton which recognizes the language~$\RNF\nn$.
As a consequence we obtain that the language~$\RNF\nn$ of $n$-rotating words is regular.

The paper is divided as follow.
In section 2 we recall briefly the construction of the rotating normal form
and its useful already known properties. 
In third section we describe the left reversing process on dual braid monoids.
In section~$4$ we give a syntactical characterization of $n$-rotating normal words.
In fifth section we construct, for each $n\geq 2$, a finite state automaton which
recognizes the language $\RNF\nn$ of $n$-rotating normal words.

\section{The rotating normal form}

The main ingredient to define the rotating normal form is the Garside automorphism 
$\ff\nn$ of $\BKL\nn$ defined by $\ff\nn(\br)=\delta_\nn\,\br\,\delta_\nn\inv$ where
$\delta_\nn=\aa12\,\aa23\,...\aa\nno\nn$ is the Garside braid of $\BKL\nn$.
In terms of Birman--Ko--Lee generators, the map $\ff\nn$ can be defined by 
\begin{equation}
  \label{E:Phi}
  \ff\nn(\aa\indi\indii)=\begin{cases}
    \aa\indip\indiip&\text{for $\indii\le\nno$,}\\
    \aa1\indip&\text{for $\indii=\nn$}.
  \end{cases} 
\end{equation}
Geometrically, $\ff\nn$ should be viewed as a rotation, which makes sense provided 
braid diagrams are drawn on a cylinder rather than on a plane rectangle.

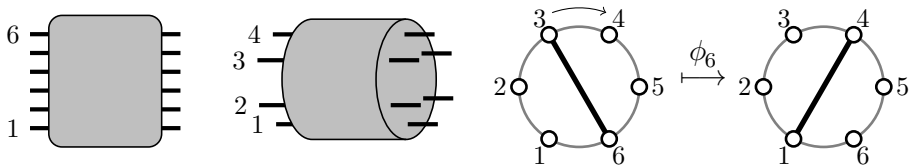
\begin{figure}[h!]
  \begin{center}
  \end{center}
  \begin{tikzpicture}[x=0.05cm,y=0.05cm]
    \foreach \v in {1,2,3,4,5,6}{\draw[line width=1.6](-5,5*\v) -- (35,5*\v);}
    \draw(-5,5) node[left]{\small$1$};
    \draw(-5,30) node[left]{\small$6$};
    \draw[rounded corners=5,fill=lightgray,line width=0.6] (0,0) -- (30,0) -- (30,35) -- (0,35) -- cycle;
    \begin{scope}[shift={(70,2)}]
      \foreach \v in {1,2,3,4,5,6}{\draw[line width=1.6](-10,16) ++ (\v*60+35:5 and 12) -- ++ (8,0);}
      \draw[fill=lightgray,line width=0.6](0,0) -- (25,0)  arc(-90:90:8 and 16) -- ++ (-25,0) arc(90:270:8 and 16);
      \draw[line width=0.6](25,0) arc (270:90:8 and 16);
      \foreach \v in {1,2,3,4,5,6}{\draw[line width=1.6](25,16) ++ (-\v*60-25:5 and 12) -- ++ (8,0);}
      \foreach \v in {1,2,3,4}{\draw[line width=1.6](-10,16) ++ (-\v*60-25:5 and 12) node[left]{\small$\v$};}
    \end{scope}
    \begin{scope}[shift={(125,0)}]
      \draw[line width=1,color=gray](16,16) circle (16);
      \draw[line width=2] (16,16) + (120:16) -- +(300:16);
      \draw[line width=2] (16,16) + (120:16) -- +(180:16);
      \draw[line width=2] (16,16) + (180:16) -- +(300:16);
      \draw[line width=2] (16,16) + (60:16) -- +(0:16);
      \draw[<-](16,16) + (130:21) arc(130:170:21);
      \foreach \v in {1,2,3,4,5,6}{
        \draw[fill=white,line width=1](16,16) ++ (-\v*60-60:16) circle (2) ++ (-\v*60-60:5) node{\small$\v$};
      }
      \begin{scope}[shift={(65,0)}]
        \draw[line width=1,color=gray](16,16) circle (16);
        \draw[line width=2] (16,16) + (60:16) -- +(-120:16);
        \draw[line width=2] (16,16) + (60:16) -- +(120:16);
        \draw[line width=2] (16,16) + (120:16) -- +(-120:16);
        \draw[line width=2] (16,16) + (0:16) -- +(-60:16);
        \foreach \v in {1,2,3,4,5,6}{
          \draw[fill=white,line width=1](16,16) ++ (-\v*60-60:16) circle (2) ++ (-\v*60-60:5) node{\small$\v$};
        }
      \end{scope}
      \draw(49,18) node{$\mathbb{\longmapsto}$};
      \draw(49,25) node{$\mathbb{\ff6}$};  
    \end{scope}

  \end{tikzpicture}

  \caption{Rolling up the usual braid diagram helps us to visualize the symmetries of the braids $\aa\indi\indii$.
    On the resulting cylinder, $\aa\indi\indii$ naturally corresponds to the chord
    connecting vertices $\indi$ and $\indii$. With this representation, $\ff\nn$ acts
    as a  clockwise rotation of the marked circles by $2\pi/n$.
  }
  \label{F:CylinderA}
\end{figure}

For $\br$ and $\brbr$ in $\BKL\nn$, we say that $\brbr$ is a \emph{right-divisor} of $\br$, if there exists a dual braid~$\brr$ of $\BKL\nn$ satisfying $\br=\brr\,\brbr$.

\begin{defi}
 For $n\geq 3$ and $\br$ a braid of $\BKL\nn$, the maximal braid $\br_1$ lying in $\BKL\nno$ that 
 right divides the braid $\br$ is called the \emph{$\BKL\nno$-tail} of $\br$.
 \end{defi}

Using basic Garside properties of the monoid~$\BKL\nn$ we obtain the following result (Proposition 2.5 of \cite{Fromentin2008a}) which allow us to express each braid of $\BKL\nn$ as a unique finite sequence of braids lying in $\BKL\nno$.

\begin{prop}
\label{P:Splitting}
Assume $\nn\geq 3$. For each nontrivial braid $\br$ of $\BKL\nn$ 
there exists a unique sequence $(\br_\brdi,\Ldots,\br_1)$ of braids of $\BKL\nno$ satisfying $\br_\brdi\not=1$ and
\begin{gather}
 \br=\ff\nn^\brdio(\br_\brdi)\cdot...\cdot\ff\nn(\br_2)\cdot\br_1,\\
\label{E2:P:Split} \text{for each $\kk\geq 1$, the $\BKL\nno$-tail of $\ff\nn^{\brdi-\kk}(\br_\brdi)\cdot...\cdot\ff\nn(\br_\kkp)$ is trivial.}
\end{gather}
\end{prop}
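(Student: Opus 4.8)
The plan is to prove existence and uniqueness simultaneously by induction on the common length $\|\br\|$ of the words representing~$\br$; this quantity is well defined and additive, $\|\br\brbr\|=\|\br\|+\|\brbr\|$, because the presentation of Proposition~\ref{P:PresBKL} is homogeneous. I use the Garside structure of $\BKL\nn$ (two‑sided cancellativity, and the well‑definedness of the $\BKL\nno$‑tail coming from $\BKL\nno$ being a parabolic submonoid of $\BKL\nn$; a braid lies in $\BKL\nno$ exactly when it is its own $\BKL\nno$‑tail), together with one elementary fact, call it~$(\ast)$: \emph{if $\br=\widehat\br\cdot\br_1$ with $\br_1\in\BKL\nno$ and the $\BKL\nno$‑tail of~$\widehat\br$ trivial, then $\br_1$ is the $\BKL\nno$‑tail of~$\br$.} Indeed any $\BKL\nno$‑right‑divisor of~$\br$ right‑divides its $\BKL\nno$‑tail~$\tau$, so $\tau=\rho\br_1$ with $\rho\in\BKL\nno$; cancelling $\br_1$ in $\widehat\br\br_1=\br=(\cdots)\rho\br_1$ shows $\rho$ is a $\BKL\nno$‑right‑divisor of~$\widehat\br$, hence $\rho=1$ and $\tau=\br_1$. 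The construction to keep in mind is the evident recursion: let $\br_1$ be the $\BKL\nno$‑tail of~$\br$, write $\br=\widehat\br\cdot\br_1$, recursively split $\ff\nn\inv(\widehat\br)$, and append $\br_1$ as the new final entry. Fact~$(\ast)$ is exactly what makes the tail condition~\eqref{E2:P:Split} hold at every level, and reindexing the powers of~$\ff\nn$ when a block is appended (or when a splitting is shifted by~$\ff\nn$) is mechanical.

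The only genuine difficulty is termination: if the $\BKL\nno$‑tail of~$\br$ is trivial then $\widehat\br=\br$ and the recursion does not shorten~$\br$, so a bare length induction stalls. This is dealt with via the geometry of~$\ff\nn$. By~\eqref{E:Phi}, $\ff\nn$ acts on the $\nn$ marked vertices of Figure~\ref{F:CylinderA} as the cyclic rotation $\indi\mapsto\indi\plus1\pmod\nn$ (so $\ff\nn^\nn=\mathrm{id}$), hence $\ff\nn^\kk(\BKL\nno)$ is the submonoid generated by the atoms $\aa\indi\indii$ both of whose indices are $\not\equiv\kk\pmod\nn$. Since $\nn\ge3$, the last letter $\aa\indi\indii$ of any word representing a non‑trivial~$\br$ omits some vertex~$\vv$, so $\aa\indi\indii\in\ff\nn^\vv(\BKL\nno)$ and $\ff\nn^{-\vv}(\aa\indi\indii)\in\BKL\nno$ right‑divides $\ff\nn^{-\vv}(\br)$. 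This gives the key lemma: \emph{for $\nn\ge3$ and $\br\ne1$ there is $\kk\in\set{0,\dots,\nno}$ such that $\ff\nn^{-\kk}(\br)$ has a non‑trivial $\BKL\nno$‑tail}; and when the $\BKL\nno$‑tail of~$\br$ itself is trivial, one must take $\kk\ge1$.

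With the lemma the induction closes. Fix $\br\ne1$ and assume the statement for all strictly shorter non‑trivial braids. \textbf{Case 1: $\br$ has a non‑trivial $\BKL\nno$‑tail~$\br_1$.} Then $\widehat\br$ is strictly shorter than~$\br$. If $\widehat\br=1$ the splitting is $(\br_1)$, unique by~$(\ast)$. Otherwise $\ff\nn\inv(\widehat\br)$ is non‑trivial and strictly shorter, so by induction it has a unique splitting; appending~$\br_1$ and verifying~\eqref{E2:P:Split} gives a splitting of~$\br$, while conversely the $\kk=1$ instance of~\eqref{E2:P:Split} together with~$(\ast)$ forces the final entry of any splitting of~$\br$ to be~$\br_1$, whereupon dividing by~$\br_1$ and applying $\ff\nn\inv$ reduces uniqueness to the shorter case. \textbf{Case 2: the $\BKL\nno$‑tail of~$\br$ is trivial.} A direct check --- using triviality of that tail --- shows that appending or deleting a trailing~$1$ is a bijection between the splittings of $\ff\nn\inv(\br)$ and those of~$\br$. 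Let $\vv$ be the least integer in $\set{1,\dots,\nno}$ with $\ff\nn^{-\vv}(\br)$ having a non‑trivial $\BKL\nno$‑tail (it exists by the lemma); then $\ff\nn^{-j}(\br)$ has trivial $\BKL\nno$‑tail for $0\le j<\vv$, so iterating the bijection identifies the splittings of~$\br$ with those of $\ff\nn^{-\vv}(\br)$. That braid has the \emph{same} length as~$\br$ but a non‑trivial $\BKL\nno$‑tail, so Case~1 --- whose argument used the inductive hypothesis only for strictly shorter braids --- applies to it verbatim at length $\|\br\|$ and yields its unique splitting, hence that of~$\br$.

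I expect the main obstacle to be precisely this termination point, and the rotation lemma that overcomes it; note that the hypothesis $\nn\ge3$ is what makes the lemma work, since it guarantees that every atom $\aa\indi\indii$ omits a vertex. Once the lemma is in hand the rest is the routine but slightly fiddly bookkeeping of how the product in Proposition~\ref{P:Splitting} and the conditions~\eqref{E2:P:Split} transform under appending a block, deleting a trailing~$1$, and applying $\ff\nn^{\pm1}$.
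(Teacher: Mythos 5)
The paper offers no proof of Proposition~\ref{P:Splitting}; it is imported verbatim as Proposition~2.5 of \cite{Fromentin2008a}, so there is no in-paper argument to compare yours against. Your reconstruction is correct and follows the standard route of that reference: greedy extraction of the $\BKL\nno$-tail (with your fact $(\ast)$ and its companion --- that the quotient by the tail has trivial tail --- handling uniqueness and condition~\eqref{E2:P:Split} respectively), and termination secured by the rotation observation that any atom omits one of the $\nn\ge3$ vertices, so some $\ff\nn^{-\kk}(\br)$ with $0\le\kk\le\nno$ has a non-trivial $\BKL\nno$-tail.
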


Under the above hypotheses,\! the sequence\! $(\br_\brdi,\Ldots,\br_1)$ is called the 
\emph{$\ff\nn$-splitting} of the braid~$\br$.
It is shown in \cite{Fromentin2008a} that Condition~\eqref{E2:P:Split} can be replaced by
\begin{equation}
 \label{E2:P:Split:v2}
\text{for each $\kk\leq1$, $\br_\kk$ is the $\BKL\nno$-tail of $\ff\nn^{\brdi-\kk}(\br_\brdi)\cdot...\cdot\ff\nn(\br_\kko)\cdot\br_\kk$.}
\end{equation}

\begin{figure}[htb]
  \begin{center}
    \begin{tikzpicture}[x=0.05cm,y=0.05cm]
      \draw(0,0) node{\includegraphics{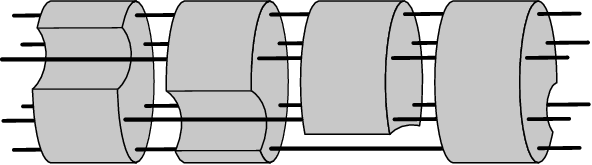}};
      \draw(100,-22) node{$\tiny 1$};
      \draw(96,-12) node{$\tiny 2$};
      \draw(96,8) node{$\tiny 3$};
      \draw(100,24) node{$\tiny 4$};
      \draw(103,14) node{$\tiny 5$};
      \draw(103,-7) node{$\tiny 6$};
      \draw(63,0) node{$\br_1$};
      \draw(16,5) node{$\ff6(\br_2)$};
      \draw(-29,10) node{$\ff6^2(\br_3)$};
      \draw(-73,-13) node{$\ff6^3(\br_4)$};
    \end{tikzpicture}
  \end{center}
  \caption{The \(\ff6\)-splitting of a braid of~\(\BKL6\).
    Starting from the right, we extract the maximal right-divisor that keeps the sixth strand unbraided, then extract the maximal right-divisor that keeps the first strand unbraided, etc.}
\end{figure}

\begin{exam}
\label{E:Split}
  Consider the braid $\br=\aa12\aa23\aa12\aa23$ of $\BKL3$.
  Using relations \eqref{E:DualNonCommutativeRelation} on the underlined factors we obtain
  \[
  \br=\aa12\aa23\underline{\aa12\aa23}=\aa12\underline{\aa23\aa13}\aa12=\aa12\aa13\aa12\aa12
  \]
We decompose $\br$ as $\ff3(\brbr_1)\cdot\br_1$ with $\brbr_1=\ff3\inv(\aa12\aa13)=\aa13\aa23$ and $\br_1=\aa12\aa12$.
The braid $\ff3(\brbr_1)=\aa12\aa23$ is exactly the one of \eqref{E2:P:Split} for $\nn=3$ and $\kk=1$.
As the word $\aa13\aa23$ is alone in its equivalence class the braid $\ff3(\brbr_1)$ is not right divisible by $\aa12$ and so its $\BKL2$-tail is trivial. 
Considering $\brbr_1$ instead of $\br$ we obtain $\brbr_1=\ff3(\brbr_2)\cdot \br_2$ with $\brbr_2=\ff3\inv(\aa13\aa23)=\aa23\aa12$ and $\br_2=1$.
The braid $\ff3(\brbr_2)=\aa13\aa23$  is the braid of \eqref{E2:P:Split} for $\nn=3$ and $\kk=2$ and it is always alone in its equivalence class, implying that its $\BKL2$-tail is trivial.
We express $\brbr_2$ as $\ff3(\brbr_3)\cdot\br_3$ with $\brbr_3=\ff3\inv(\aa23)=\aa12$ and $\br_3=\aa12$.
Since $\brbr_3$ equals $\aa12$ we obtain $\brbr_4=1$ and $\br_4=\aa12$.
We conclude that the $\ff3$-splitting of $\br$ is $(\aa12,\aa12,1,\aa12^2)$.
\end{exam}

Before giving the definition of the rotating normal we fix some definitions about words.

\begin{defi}
 A word on the alphabet $\SPBKL\nn$ is an \emph{$\SPBKL\nn$-word}.
 A word on the alphabet $\SBKL\nn=\SPBKL\nn\sqcup\SPBKL\nn\inv$ is an \emph{$\SBKL\nn$-word}.
The braid represented by the $\SBKL\nn$-word $\ww$ is denoted by $\overline{\ww}$.
For $\ww$, $\ww'$ two $\SBKL\nn$-word, we say that $\ww$ is equivalent to $\ww'$, 
denoted by $\ww\equiv\ww'$ if $\overline\ww=\overline{\ww'}$ holds. 
The empty word is denoted by $\varepsilon$.
\end{defi}

The \emph{$\nn$-rotating normal form} is an injective map $\rnf_\nn$ from $\BKL\nn$ to the set of $\SPBKL\nn$-words defined inductively using the $\ff\nn$-splitting.

\begin{defi}\label{D:Rotating}
For  $\br\in\BKL2$, we define $\rnf_2(\br)$ to be the unique word~$\aa12^k$ representing $\br$.
The rotating normal form of a braid $\br\in\BKL\nn$ with $n\geq 3$ is 
$$\rnf_\nn(\br)= \ff\nn^\brdio(\rnf_\nno(\br_\brdi))\cdot...\cdot\ff\nn(\rnf_\nno(\br_2))\cdot\rnf_\nno(\br_1),$$
 where $(\br_\brdi,\Ldots,\br_1)$ is the $\ff\nn$-splitting of $\br$. 
 A word $w$ is said to be \emph{$n$-rotating} if it is the $\nn$-rotating normal form of a braid of $\BKL\nn$.
\end{defi}

As the $\nn$-rotating normal form of a braid of $\BKL\nno$ is equal to its $(\nno)$-rotating normal form we can talk without ambiguities of the \emph{rotating normal form} about a dual braid.

\begin{exam}
We reconsider the braid $\br$ of Example~\ref{E:Split}.
We know that the $\ff3$-splitting of $\br$ is $(\aa12,\aa12,1,\aa12^2)$.
Since $\rnf_2(1)=\varepsilon$, $\rnf_2(\aa12)=\aa12$ and $\rnf_2(\aa12^2)=\aa12^2$ we obtain
$$\rnf_3(\br)=\ff3^3(\aa12)\cdot\ff3^2(\aa12)\cdot \ff3(\varepsilon)\cdot \aa12^2=\aa12\aa13\aa12\aa12.$$
\end{exam}

Some properties of the rotating normal form have been established in \cite{Fromentin2008a}.
Connections, established in  \cite{Fromentin2008} and \cite{Fromentin2008a}, between the rotating normal form and the braid's ordering introduced by P.~Dehornoy are based on these properties.

We finish this section with some already known or immediate properties about $\ff\nn$-splittings and $\nn$-rotating words.

\begin{defi}
 For every nonempty word $\ww$, the last letter of $\ww$ is denoted by~$\last\ww$.
 For each nontrivial braid $\br$ in $\BKL\nn$, we define the \emph{last letter} of $\br$,
 denoted $\last\br$, to be the last letter in the rotating normal form of $\br$.
\end{defi}

\begin{lemm}[Lemma 3.2 of \cite{Fromentin2008a}]
\label{L:LastLetter}
 Assume $\nn\geq3$ and  let $(\br_\brdi,...,\br_1)$ be a $\ff\nn$-splitting
 
 $(i)$ For $\kk\geq 2$, the letter $\last{\br}_\kk$ is of type $\aa{..}\nno$ unless $\br_\kk=1$.
  
 $(ii)$ For $\kk\geq3$ and $\kk=\brdi$, we have $\br_\kk\not=1$.
 
\end{lemm}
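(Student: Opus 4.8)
The plan is to argue directly from the two characterizations of the $\ff\nn$-splitting, namely Condition~\eqref{E2:P:Split} and its reformulation~\eqref{E2:P:Split:v2}, together with the definition of the $\BKL\nno$-tail. For part $(i)$, fix $\kk\geq 2$ and suppose $\br_\kk\neq 1$. By~\eqref{E2:P:Split:v2}, $\br_\kk$ is the $\BKL\nno$-tail of the braid $\ff\nn^{\brdi-\kk}(\br_\brdi)\cdots\ff\nn(\br_\kko)\cdot\br_\kk$, hence $\br_\kk$ is a non-trivial braid of $\BKL\nno$. I would then recurse on the dimension: $\br_\kk\in\BKL\nno$ is non-trivial, so it has its own $\ff\nno$-splitting $(\br'_{\brdi'},\dots,\br'_1)$ with $\br'_{\brdi'}\neq 1$, and $\last{\br}_\kk=\last{(\br'_1)}$ if $\br'_1\neq 1$, otherwise $\last{\br}_\kk$ is read from $\ff\nno(\br'_2)$, and so on. The key point is to show that the last letter of $\br_\kk$, viewed inside $\BKL\nno$, must involve the strand $\nno$ — equivalently that $\br_\kk$ is \emph{not} right-divisible by any braid of $\BKL\nnt$ alone, i.e.\ its $\BKL\nnt$-tail is trivial. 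This is forced by maximality: if $\br_\kk$ had a non-trivial $\BKL\nnt$-tail, one could peel it off and absorb it into the description, contradicting that $\br_\kk$ is the \emph{maximal} right-divisor of the relevant product lying in $\BKL\nno$ whose further tail is trivial — more precisely, Condition~\eqref{E2:P:Split} says the $\BKL\nno$-tail of $\ff\nn^{\brdi-\kk}(\br_\brdi)\cdots\ff\nn(\br_\kkp)$ is trivial, and if $\br_\kk$ were right-divisible by a generator $\aa\indi\indii$ with $\indii\leq\nnt$ then that generator (being fixed, up to index shift, by the relevant power of $\ff\nn$ restricted appropriately) could be pushed one step to the left, enlarging the $\BKL\nno$-tail of the preceding product and contradicting triviality. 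Hence every generator occurring as $\last{\br}_\kk$ is of the form $\aa{..}\nno$.

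For part $(ii)$, the statement $\br_\brdi\neq 1$ is immediate: it is part of the defining conditions of the $\ff\nn$-splitting in Proposition~\ref{P:Splitting}. For $\kk\geq 3$ with $\kk<\brdi$, I would suppose toward a contradiction that $\br_\kk=1$. Then the product $\ff\nn^{\brdi-\kk}(\br_\brdi)\cdots\ff\nn(\br_\kkp)\cdot\br_\kk$ equals $\ff\nn^{\brdi-\kk}(\br_\brdi)\cdots\ff\nn(\br_\kkp)$, and applying $\ff\nn$ once more the braid $\ff\nn^{\brdi-\kko}(\br_\brdi)\cdots\ff\nn^2(\br_\kkp)$ appears as a left-multiple inside the tail computation for index $\kko$. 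The idea is that a trivial $\br_\kk$ "transmits" a right-divisibility obstruction: since $\br_\kkp\neq 1$ has last letter of type $\aa{..}\nno$ by part $(i)$ (using $\kkp\geq 4\geq 3$, or handling $\kkp=\brdi$ via the first clause), the braid $\ff\nn(\br_\kkp)$ has last letter of type $\aa{..}\nn$ by the definition~\eqref{E:Phi} of $\ff\nn$, and such a letter makes the corresponding product right-divisible by a braid of $\BKL\nno$ that ought to have been captured earlier — contradicting either the non-triviality of $\br_\brdi$ at the top or the uniqueness/maximality in Proposition~\ref{P:Splitting}. The cleanest route is probably to cite the already-established structural lemmas of~\cite{Fromentin2008a} showing that consecutive entries of an $\ff\nn$-splitting cannot both vanish except possibly at the very bottom ($\kk=1,2$), which is exactly the content being claimed.

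The main obstacle will be part $(i)$: carefully justifying that the last letter of the non-trivial tail $\br_\kk$ genuinely involves strand $\nno$ rather than merely lying in $\BKL\nno$. This requires the interplay between the left-reversing / divisibility combinatorics in the dual monoid and the index-shift behaviour of $\ff\nn$, and is where one must be precise about how maximality of the $\BKL\nno$-tail propagates down one dimension to forbid a trivial $\BKL\nnt$-tail. Part $(ii)$ is then largely bookkeeping on top of part $(i)$ and the defining properties of the splitting.
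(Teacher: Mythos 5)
First, a point of reference: the paper does not prove this statement. It is imported verbatim as Lemma~3.2 of \cite{Fromentin2008a}, and the only comment offered is that $\br_\brdi\neq1$ follows from the definition of the splitting. So your proposal can only be judged on its own merits, and as a proof it has gaps in both parts. For part $(i)$, the central idea is sound: a right divisor $\aa\indi\indii$ of $\br_\kk$ with $\indii\leq\nnt$ yields, after applying $\ff\nn$, the right divisor $\aa\indip\indiip\in\SPBKL\nno$ of the relevant product, contradicting the triviality of its $\BKL\nno$-tail. But you point at the wrong instance of Condition~\eqref{E2:P:Split}: the product $\ff\nn^{\brdi-\kk}(\br_\brdi)\cdots\ff\nn(\br_\kkp)$ does not contain $\br_\kk$ at all, so a right divisor of $\br_\kk$ says nothing about it; you must use the instance at index $\kko$, whose product is $\ff\nn^{\brdi-\kk+1}(\br_\brdi)\cdots\ff\nn(\br_\kk)$. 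More seriously, your route from ``the $\BKL\nnt$-tail of $\br_\kk$ is trivial'' to ``$\last{\br}_\kk$ is of type $\aa{..}\nno$'' by recursing through the $\ff\nno$-splitting of $\br_\kk$ does not close: if the splitting of $\br_\kk$ had $\br'_1=\br'_2=1$ (which does occur for general dual braids, e.g.\ the splitting of $\aa13$ in $\BKL3$ is $(\aa12,1,1)$), the last letter would be $\ff\nno^2$ of a letter, hence of type $\aa1{..}$ rather than $\aa{..}\nno$. The correct bridge is Lemma~\ref{L:UniqueLastLetter} applied inside $\BKL\nno$: a non-trivial braid with trivial $\BKL\nnt$-tail has a \emph{unique} right-dividing letter, of type $\aa{..}\nno$, and the last letter of any representative word (in particular the rotating normal form) right-divides the braid and so equals it.

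Part $(ii)$ is where the real gap lies. For $3\leq\kk<\brdi$ your argument is: $\ff\nn(\br_\kkp)$ ends with a letter of type $\aa{..}\nn$, and ``such a letter makes the corresponding product right-divisible by a braid of $\BKL\nno$ that ought to have been captured earlier.'' This inference is unjustified --- a letter of type $\aa{..}\nn$ does \emph{not} lie in $\BKL\nno$, so it creates no $\BKL\nno$-tail, and nothing is ``captured earlier.'' Indeed, setting $\brbr_\jj=\ff\nn^{\brdi-\jj}(\br_\brdi)\cdots\br_\jj$, the hypothesis $\br_\kk=1$ is perfectly consistent with the tail conditions at indices $\kk$ and $\kko$: they only force $\brbr_\kkp$ to be right-divisible by the single letter $\aa\nnt\nno$ (so that both $\ff\nn(\brbr_\kkp)$ and $\ff\nn^2(\brbr_\kkp)$ have right divisors of type $\aa{..}\nn$). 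The contradiction only appears two levels further down, at index $\kkt$: there $\ff\nn^3(\brbr_\kkp)$ is right-divisible only by $\aa12$, and one must show that $\aa12\cdot\ff\nn(\br_\kko)$ (with $\ff\nn(\br_\kko)$ supported on letters $\aa\indi\indii$, $\indi\geq2$) is always right-divisible by some letter of $\SPBKL\nno$, violating \eqref{E2:P:Split} at index $\kkt$. That step is the actual content of the lemma and requires a computation with the relations \eqref{E:DualCommutativeRelation}--\eqref{E:DualNonCommutativeRelation} (or a reversing argument in the spirit of Lemma~\ref{L:Reversing}); declaring that ``the cleanest route is probably to cite the already-established structural lemmas of \cite{Fromentin2008a}'' reduces the proof to the citation the paper already makes. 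There is also a circularity to repair: you invoke $\br_\kkp\neq1$, which for $\kkp<\brdi$ is part $(ii)$ at a higher index, so the whole argument must be organized as a downward induction from $\kk=\brdi$ (or one must note that $\br_\kk=1$ directly forces $\br_\kkp\neq1$ by the unique-divisor analysis above).
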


The fact that $\br_\brdi$ is not trivial is a direct consequence of the definition of $\ff\nn$-splitting.
As, for $\kk\geq2$, the braid $\brr=\ff\nn(\last{\br}_\kkp)\br_\kk$ is a right-divisor of~$\ff\nn^{\brdi-\kk}(\br_\brdi)\cdot...\cdot\br_\kk$, it must satisfy some properties.
In particular, if $\last{\br}_\kkp=\aa\indio\nno$ holds then the $\BKL\nno$-tail of $\ff\nn(\aa\indi\nn\br_\kk)$ 
is trivial by \eqref{E2:P:Split}.

\begin{defi}
We say that a letter $\aa\indiii\indiv$ is an \emph{$\aa\indi\nn$-barrier} if $1\leq \indiii<\indi<\indiv\leq \nno$ holds.
\end{defi}

There exist no $\aa\indi\nn$-barrier with $\nn\leq 3$ and the only $\aa\indi4$-barrier is $\aa13$, which is an $\aa24$-barrier. 
By definition, if the letter $x$ is an $\aa\indi\nn$-barrier, then in the presentation of $\BKL\nn$ there exists no relation of the form $\aa{p}{n}\,\cdot\, x=y\,\cdot\, \aa{p}{n}$ allowing one to push the letter $\aa{p}{n}$ to the right through the letter~$x$: so, in some sense, $x$ acts as a barrier.

\begin{lemm}[Lemma 3.4 of \cite{Fromentin2008a}]
\label{L:Barrier}
 Assume that $\nn\geq 3$, $\br$ is a braid of $\BKL\nno$ and the $\BKL\nno$-tail of $\ff\nn(\aa\indi\nn\br)$ 
 is trivial for $2\leq\indi\leq\nnt$.
 Then the rotating normal form of $\br$ is not the empty word and it contains an $\aa\indi\nn$-barrier.
\end{lemm}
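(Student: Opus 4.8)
\emph{Proof sketch.} The plan is to argue by contraposition: assuming $r_{n-1}(\br)$ has no $\aa\indi\nn$-barrier, I will produce a non-trivial right divisor of $\aa\indi\nn\br$ in a suitable parabolic submonoid, which makes the tail in the statement non-trivial. First apply the automorphism $\ff\nn$: since $\ff\nn$ is rotation by $2\pi/n$, the $\BKL\nno$-tail of $\ff\nn(\aa\indi\nn\br)$ is trivial exactly when $\aa\indi\nn\br$ has no non-trivial right divisor in $M:=\ff\nn^{\minus1}(\BKL\nno)$, which by \eqref{E:Phi} is the parabolic submonoid of $\BKL\nn$ generated by the $\aa\jj\kk$ with $\jj,\kk\in\set{1,\dots,\nn}\setminus\set{\nno}$; note that $\indi\in[2,\nnt]$ puts $\aa\indi\nn$ itself in $M$. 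If $\br=1$ this is absurd, as then $\aa\indi\nn\br=\aa\indi\nn\in M\setminus\set1$; hence $\br\neq1$, which is the first assertion. So from now on $\br\neq1$ and $r_{n-1}(\br)$ contains no $\aa\indi\nn$-barrier, and it suffices to find a non-trivial right divisor of $\aa\indi\nn\br$ in $M$.

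Two cheap reductions: since $M\cap\BKL\nno=\BKL\nnt$, a non-trivial right divisor of $\br$ in $\BKL\nnt$ would already be one of $\aa\indi\nn\br$ in $M$; so the $\BKL\nnt$-tail of $\br$ is trivial, i.e.\ $\br_1=1$ in the $\ff\nno$-splitting $(\br_\brdi,\dots,\br_1)$ of $\br$, and $\brdi\ge2$. Likewise, writing $r_{n-1}(\br)=w_0\cdot x$ with $x$ its last letter, if $x$ has second index $\le\nnt$ then $x\in M\setminus\set1$ right-divides $\aa\indi\nn\br$ and we are done; so we may assume $x=\aa\ii\nno$ for some $\ii$, and, $x$ not being an $\aa\indi\nn$-barrier (and $\nno>\indi$), that $\ii\ge\indi$. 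By Definition~\ref{D:Rotating} and $\br_1=1$ this also pins down $x$: with $\kk_0=\min\set{\kk:\br_\kk\neq1}\ge2$ one has $x=\ff\nno^{\kk_0\minus1}(\last{\br}_{\kk_0})$, and $\last{\br}_{\kk_0}$ is of type $\aa{..}\nnt$ by Lemma~\ref{L:LastLetter}.

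The heart is a reversing computation: using the left-reversing process on $\BKL\nn$ (Section~3), track the band $\aa\indi\nn$ as it is moved rightwards through $w=r_{n-1}(\br)$. Against a letter $\aa\indiii\indiv$ a band $\aa\jj\nn$ either commutes past it — precisely when $\set{\indiii,\indiv}$ and $\set{\jj,\nn}$ are disjoint or nested, i.e.\ $\indiv<\jj$ or $\indiii>\jj$ — or, when $\indiii=\jj$, is transformed by the relation $\aa\jj\nn\,\aa\jj\indiv=\aa\jj\indiv\,\aa\indiv\nn$ coming from \eqref{E:DualNonCommutativeRelation} into the band $\aa\indiv\nn$. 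The two remaining configurations, $\indiii<\jj=\indiv$ (a generator through the current vertex $\jj$) and $\indiii<\jj<\indiv$ (an $\aa\jj\nn$-barrier), are exactly the ones in which the band is ``blocked''. The claim is that the no-$\aa\indi\nn$-barrier hypothesis, propagated through the reversing together with the structural information above, rules these out all along, and that the band's index never exceeds $\ii$; so the band reaches the last letter $x=\aa\ii\nno$ as some $\aa\jj\nn$ with $\jj\le\ii$. Then $\set{\jj,\nn}$ and $\set{\ii,\nno}$ are disjoint or nested (if $\jj<\ii$), or meet only at $\jj=\ii$, where \eqref{E:DualNonCommutativeRelation} gives $\aa\ii\nn\,\aa\ii\nno=\aa\nno\nn\,\aa\ii\nn$; either way $\aa\indi\nn\br$ is right-divisible by $\aa\jj\nn$ (resp.\ $\aa\ii\nn$), whose indices avoid $\nno$ because $\jj\le\ii\le\nnt$, hence lies in $M\setminus\set1$ — the desired contradiction.

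The main obstacle is precisely the claim used twice above, that the travelling band is never blocked and never overshoots $\ii$. A generator $\aa\indiii\indi$ through vertex $\indi$ is not itself an $\aa\indi\nn$-barrier, and once the band's index has moved past $\indi$ a barrier relative to the \emph{new} index can a priori appear; both must be excluded. I expect this to be handled by feeding the precise shape of $r_{n-1}(\br)$ into the reversing diagram — concretely, by an induction on $n$ whose step uses Lemma~\ref{L:LastLetter} and the $\ff\nno$-splitting of $\br$ to control the generators occurring before $x$, the base case $n=4$ (where the only barrier is $\aa13$) being checked by hand. Turning this book-keeping into a rigorous verification of the two claims is where the real work lies. \hfill$\square$
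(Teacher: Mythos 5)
Your set-up is fine as far as it goes: the reformulation of the hypothesis as ``$\aa\indi\nn\br$ has no non-trivial right divisor in the parabolic submonoid $M=\ff\nn^{-1}(\BKL\nno)$ on $\{1,\dots,\nn\}\setminus\{\nno\}$'' is correct, the deduction $\br\neq1$ is immediate from $\aa\indi\nn\in M$, and the two reductions (trivial $\BKL\nnt$-tail, last letter $\aa\ii\nno$ with $\ii\geq\indi$) are sound. (For calibration: the paper itself does not prove this statement; it imports it as Lemma~3.4 of \cite{Fromentin2008a}, so there is no in-paper argument to compare against.) The problem is that everything after these reductions is not a proof but a declared intention, and — more seriously — the migration rules you actually wrote down make your two key claims \emph{false}, not merely unproven. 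Take $\nn=4$, $\indi=2$, $\br=\overline{\aa23^{\,2}}$. Here $r_3(\br)=\aa23^{\,2}$ is rotating (Proposition~\ref{P:RNF3}), contains no $\aa13$ and hence no $\aa24$-barrier, has trivial $\BKL2$-tail, and ends with $x=\aa23$, so $\ii=2$; the lemma (contrapositive) demands that your band reach the right end inside $M$. But under your rule for the case ``first index of the letter equals the band index'', the band $\aa24$ meeting the first $\aa23$ becomes $\aa34$ — already overshooting $\ii=2$ and leaving $M$ — and $\aa34$ against the second $\aa23$ is your blocked configuration $\indiii<\jj=\indiv$. So the procedure halts, although the desired divisor exists: $\aa24\,\aa23^{\,2}=\aa34^{\,2}\,\aa24$, i.e.\ one must use the \emph{other} reading of relation \eqref{E:DualNonCommutativeRelation}, $\aa\jj\nn\aa\jj\indiv=\aa\indiv\nn\,\aa\jj\nn$, which keeps the band's index fixed at the cost of rewriting the letter it crosses into one involving $\nn$.

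This is not a cosmetic slip: with your stated rules the band's index is non-decreasing and strictly increases at every non-commuting step, so on any word with two consecutive letters sharing the band's index it dies, and the claim ``the band's index never exceeds $\ii$'' cannot be rescued by book-keeping — it has to be replaced by a different invariant attached to a different rewriting strategy. Even after fixing the local rules, the two global claims (no blockage, controlled index) are exactly where the content of the lemma lives, and you have given no induction, no invariant, and no treatment of the genuinely dangerous configuration $\indiii<\jj<\indiv$ once $\jj\neq\indi$ (the no-$\aa\indi\nn$-barrier hypothesis says nothing about $\aa\jj\nn$-barriers for $\jj>\indi$). As it stands the proposal establishes only the easy first assertion $\br\neq1$; the barrier statement remains unproved.
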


 \begin{lemm}[Lemma 3.5 of \cite{Fromentin2008a}]
 \label{L:Barrier2}
  Let $(\br_\brdi,...,\br_1)$ be a $\ff\nn$-splitting of some braid of $\BKL\nn$ with $n\geq3$.
  Then for each $\kk\in[2,\brdio]$ such that $\last{\br}_\kkp$ is not $\aa\nnt\nno$ (if any), the rotating normal
  form of $\br_\kk$ contains an $\ff\nn(\last{\br}_\kkp)$-barrier.
 \end{lemm}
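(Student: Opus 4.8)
The plan is to read off the shape of $\last{\br}_\kkp$ from Lemma~\ref{L:LastLetter}, feed it into Lemma~\ref{L:Barrier}, and translate the resulting barrier back through $\ff\nn$; no new construction is needed. First I would dispose of the case $\nn=3$: then the alphabet $\SPBKL{\nno}=\SPBKL2$ consists of the single letter $\aa12$, which equals $\aa\nnt\nno$, so every last letter that can occur equals $\aa\nnt\nno$, the requirement that $\last{\br}_\kkp$ differ from $\aa\nnt\nno$ is never met, and the statement holds vacuously. From now on I therefore assume $\nn\ge4$, which is also the hypothesis of Lemma~\ref{L:Barrier}.

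Next, fix $\kk\in[2,\brdio]$ with $\last{\br}_\kkp\ne\aa\nnt\nno$. Since $\kkp\ge3$, part $(ii)$ of Lemma~\ref{L:LastLetter} gives $\br_\kkp\ne1$, and then part $(i)$ gives that $\last{\br}_\kkp$ is of type $\aa{..}\nno$; write $\last{\br}_\kkp=\aa\indio\nno$. Its first subscript satisfies $1\le\indio\le\nnt$, so $\indi\in[2,\nno]$, and since $\last{\br}_\kkp\ne\aa\nnt\nno$ we get $\indio\ne\nnt$, hence $\indi\in[2,\nnt]$, which is exactly the interval required by Lemma~\ref{L:Barrier}. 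Moreover, by~\eqref{E:Phi}, as the second subscript $\nno$ does not exceed $\nno$, we have $\ff\nn(\last{\br}_\kkp)=\ff\nn(\aa\indio\nno)=\aa\indi\nn$, so an $\aa\indi\nn$-barrier and a $\ff\nn(\last{\br}_\kkp)$-barrier are the same thing; it thus suffices to exhibit an $\aa\indi\nn$-barrier inside the rotating normal form of $\br_\kk$.

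For that I would apply Lemma~\ref{L:Barrier} to the braid $\br=\br_\kk\in\BKL\nno$, with parameter $\indi\in[2,\nnt]$ and $\nn\ge4$; its hypothesis is that the $\BKL\nno$-tail of $\ff\nn(\aa\indi\nn\,\br_\kk)$ be trivial. Since $\last{\br}_\kkp=\aa\indio\nno$, this is precisely the conclusion recorded, for the index $\kk$, in the remark following Lemma~\ref{L:LastLetter}: there one observes that $\ff\nn(\last{\br}_\kkp)\,\br_\kk=\aa\indi\nn\,\br_\kk$ right-divides $\ff\nn^{\brdi-\kk}(\br_\brdi)\cdot...\cdot\br_\kk$, and Condition~\eqref{E2:P:Split} then forces that $\BKL\nno$-tail to be trivial. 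With the hypothesis verified, Lemma~\ref{L:Barrier} yields that the rotating normal form of $\br_\kk$ is not the empty word and contains an $\aa\indi\nn$-barrier, that is, a $\ff\nn(\last{\br}_\kkp)$-barrier, which is the conclusion sought.

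The one point that needs care, and hence the main obstacle, is the bookkeeping of indices around the excluded letter. One must notice that $\aa\nnt\nno$ is exactly the letter whose $\ff\nn$-image $\aa\nno\nn$ admits no barrier at all (a letter $\aa{r}{s}$ with $r,s\in[1,\nno]$ cannot satisfy $r<\nno<s$), so the hypothesis of Lemma~\ref{L:Barrier} genuinely cannot be dropped here, and excluding this single letter is precisely what confines $\indi$ to the interval $[2,\nnt]$ in which Lemma~\ref{L:Barrier} applies. Everything else is a faithful transcription of Lemmas~\ref{L:LastLetter} and~\ref{L:Barrier} and of the remark between them, with the indices aligned.
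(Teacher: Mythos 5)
Your proof is correct and follows exactly the route the paper indicates: the lemma is quoted from \cite{Fromentin2008a} without a written proof here, but the remark following Lemma~\ref{L:LastLetter} (triviality of the $\BKL\nno$-tail of $\ff\nn(\aa\indi\nn\br_\kk)$ via Condition~\eqref{E2:P:Split}) combined with Lemma~\ref{L:Barrier} is precisely the intended derivation, and your index bookkeeping ($\indi\in[2,\nnt]$, the vacuous case $\nn=3$) is accurate.
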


\section{Left reversing for dual braid monoid}

Left reversing process was introduced by P.~Dehornoy in \cite{Dehornoy1997a}.
It is a powerfull tool tfor the investigation of division properties in some monoids as stated by Proposition~\ref{P:Div}.

\begin{defi}
 
A monoid $M$ defined by a presentation $\left<S\, |\, R\right>^+$ is \emph{left complemented} if there exists 
a map $f:S\times S\to S^\ast$ satisfying
\[
 R=\left\{f(x,y)x=f(y,x)y\, | \, (x,y)\in S^2\right\}
\]
and if $f(\xx,\xx)=\varepsilon$ holds for all $\xx\in S$.
\end{defi}

As the relation $\xx=\xx$ is always true for $\xx\in S$ we say that $M$ is left complemented even if
$x=x$ does not occur in $R$ for $x\in S$.

The monoid $\BKL3$ with presentation of Proposition~\ref{P:PresBKL} is left complemented with respect to the map $f$ given by 
\begin{align*}
 f(\aa12,\aa23)=f(\aa12,\aa13)&=\aa13\\
 f(\aa23,\aa12)=f(\aa23,\aa13)&=\aa12\\
 f(\aa13,\aa12)=f(\aa13,\aa23)&=\aa23
\end{align*}
However the monoid $\BKL4$ with presentation of Proposition~\ref{P:PresBKL} is not left complemented.
Indeed there is no relation of the form $...\,\aa13=...\,\aa24$.
Hence the words $f(\aa13,\aa24)$ and $f(\aa24,\aa13)$ are not well defined.

In general for  $1\leq\indi<\indiii<\indii<\indiv\leq\nn$, the word 
$f(\aa\indi\indii,\aa\indiii\indiv)$ and $f(\aa\indiii\indiv,\aa\indi\indii)$ are not defined for
the presentation of $\BKL\nn$ given in Proposition~\ref{P:PresBKL}.
In order to obtain a left complemented presentation of $\BKL\nn$ we must exhibit some extra relations from these
given in Proposition~\ref{P:PresBKL}.

By example, the relation $\aa23\aa14\,\aa13\equiv\aa34\aa12\,\aa24$ holds and so we can consider 
$f(\aa13,\aa24)$ to be $\aa23\aa14$. 
However the relation $\aa14\aa23\,\aa13\equiv\aa34\aa12\,\aa24$ is also satisfied and so $f(\aa13,\aa24)=\aa23\aa14$
is an other valid choice.

\begin{lemm}
\label{L:Comp}
For $n\geq2$, the map $f_n:\SPBKL\nn\times\SPBKL\nn\to\SPBKL\nn^\ast$ defined by
\[
 f_n(\aa\indi\indii,\aa\indiii\indiv)=\begin{cases}
                                       \varepsilon&\text{for $\aa\indi\indii=\aa\indiii\indiv$,}\\
                                       \aa\indi\indiv&\text{for $\indii=\indiii$,}\\
                                       \aa\indiv\indii&\text{for $\indi=\indiii$ and $\indii>\indiv$,}\\
                                       \aa\indiii\indi&\text{for $\indii=\indiv$ and $\indi>\indiii$,}\\
                                       \aa\indiii\indii\aa\indi\indiv&\text{for $\indi<\indiii<\indii<\indiv$,}\\
                                       \aa\indiv\indii\aa\indiii\indi&\text{for $\indiii<\indi<\indiv<\indii$,}\\
                                       \aa\indiii\indiv&\text{otherwise.}
                                      \end{cases}
\]
provides a structure of left complemented monoid to $\BKL\nn$.
\end{lemm}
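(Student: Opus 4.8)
The plan is to verify directly that the map $f_n$ is a valid left-complement for the presentation of $\BKL\nn$ from Proposition~\ref{P:PresBKL}, in the sense of the definition of left-complemented monoid. This requires two things: first, that $f_n(\xx,\xx)=\ew$ for every generator $\xx$ (which is built into the first case of the definition), and second, that the set of relations $\{f_n(\xx,\yy)\,\xx = f_n(\yy,\xx)\,\yy \mid \xx,\yy\in\SPBKL\nn,\ \xx\neq\yy\}$ presents the same monoid as the relations \eqref{E:DualCommutativeRelation}--\eqref{E:DualNonCommutativeRelation}. I would split the verification according to the relative position of the two intervals $[\indi,\indii]$ and $[\indiii,\indiv]$, matching the case analysis in the definition of $f_n$, and in each case exhibit the pair of words $f_n(\aa\indi\indii,\aa\indiii\indiv)$ and $f_n(\aa\indiii\indiv,\aa\indi\indii)$ and check that (a) the equality $f_n(\aa\indi\indii,\aa\indiii\indiv)\,\aa\indi\indii \equiv f_n(\aa\indiii\indiv,\aa\indi\indii)\,\aa\indiii\indiv$ is a consequence of \eqref{E:DualCommutativeRelation}--\eqref{E:DualNonCommutativeRelation}, and (b) conversely, each defining relation \eqref{E:DualCommutativeRelation}--\eqref{E:DualNonCommutativeRelation} follows from these new relations.

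For (a), the bookkeeping is as follows. Without loss of generality assume $\indi\le\indiii$. When $\aa\indi\indii=\aa\indiii\indiv$ there is nothing to prove. When the intervals are disjoint or nested but unequal, the relevant instance of the "otherwise" clause gives $f_n(\aa\indi\indii,\aa\indiii\indiv)=\aa\indiii\indiv$ and $f_n(\aa\indiii\indiv,\aa\indi\indii)=\aa\indi\indii$, so the new relation is exactly \eqref{E:DualCommutativeRelation}. When the intervals share exactly one endpoint we are in one of the cases $\indii=\indiii$, $\indi=\indiii$ with $\indii>\indiv$, or $\indii=\indiv$ with $\indi>\indiii$; here $f_n$ produces single letters and the asserted equality is one of the three forms of the triangle relation \eqref{E:DualNonCommutativeRelation} (for instance $\indii=\indiii$ gives $\aa\indi\indiv\aa\indi\indii\equiv\aa\indiii\indiv\aa\indi\indiii$, i.e. $\aa\indi\indiv\aa\indi\indii\equiv\aa\indii\indiv\aa\indi\indii$... — more precisely one checks it is $\aa\indi\indii\aa\indii\indiv=\aa\indi\indiv\aa\indi\indii$, which is \eqref{E:DualNonCommutativeRelation} with the roles rearranged). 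The genuinely new content is the crossing case $\indi<\indiii<\indii<\indiv$ (and its mirror), where $f_n$ yields the length-two words $\aa\indiii\indii\aa\indi\indiv$ and $\aa\indiii\indiv\aa\indiii\indi$, hmm — I must recheck against the mirror case — and one must verify the identity $\aa\indiii\indii\aa\indi\indiv\aa\indi\indii\equiv\aa\indiii\indiv\cdot(\text{the other word})\cdot\aa\indiii\indiv$ using a short sequence of applications of \eqref{E:DualCommutativeRelation}--\eqref{E:DualNonCommutativeRelation}; the model computation is the worked identity $\aa23\aa14\aa13\equiv\aa34\aa12\aa24$ already displayed in the excerpt, which handles $n=4$, and the general case reduces to it because all indices outside $\{\indi,\indiii,\indii,\indiv\}$ are spectators (they commute past everything by \eqref{E:DualCommutativeRelation}, being disjoint from or nested relative to the four active positions).

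For (b) — deriving the old presentation from the new one — the disjoint/nested commutation relations \eqref{E:DualCommutativeRelation} are literally among the new relations as noted above, and each triangle relation \eqref{E:DualNonCommutativeRelation} appears (possibly after trivial rewriting) as the new relation attached to a pair of generators sharing an endpoint. Hence the two presentations are equivalent and $f_n$ endows $\BKL\nn$ with a left-complemented structure. The main obstacle is purely the crossing case: one has to be careful to pick, in the definition, the \emph{specific} word ($\aa\indiii\indii\aa\indi\indiv$ rather than the equally valid $\aa\indi\indii\aa\indiii\indiv$-type alternative mentioned in the text) so that the subsequent left-reversing arguments in Section 3 go through, and then check that with this choice the square relation indeed holds. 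Everything else is routine endpoint bookkeeping, and the presence of spectator strands never creates a difficulty because of \eqref{E:DualCommutativeRelation}.
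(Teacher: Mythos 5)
Your proposal is correct and follows essentially the same route as the paper, which simply states that direct computations using Proposition~\ref{P:PresBKL} establish $f_\nn(\xx,\yy)\cdot\xx\equiv f_\nn(\yy,\xx)\cdot\yy$ for all pairs of generators; your expanded case analysis (endpoint-sharing pairs giving the triangle relations, disjoint/nested pairs giving the commutations, and the crossing case reducing to the model identity $\aa23\aa14\aa13\equiv\aa34\aa12\aa24$) is just that computation written out, together with the easy converse that the original relations reappear among the new ones. The only blemish is the garbled display of the crossing-case relation, which should read $\aa\indiii\indii\,\aa\indi\indiv\,\aa\indi\indii\equiv\aa\indiv\indii\,\aa\indiii\indi\,\aa\indiii\indiv$; your cited model computation is the correct instance of it.
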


\begin{proof}
Direct computations using  Proposition~\ref{P:PresBKL} establish 
$f_\nn(\xx,\yy)\cdot\xx\equiv f_\nn(\yy,\xx)\cdot \yy$ for all $(x,y)\in \SPBKL\nn^2$.
\end{proof}

Our choice for $f_n(\aa\indi\indii,\aa\indiii\indiv)$ with $\indi<\indiii<\indii<\indiv$ is well suited for the sequel 
and some proof would be invalid if we made an other one.

\begin{defi}
 For $\ww$ and $\ww'$ two $\SBKL\nn$-words, we say that $\ww$ \emph{left reverses in one step} to~$\ww'$, denoted $\ww\rev^{1}\ww'$,
 if we can obtain $\ww'$ from $\ww$ substituting a factor $xy\inv$ (with $x,y \in \SPBKL\nn$) by $f_n(x,y)\inv f_n(y,x)$.
 We say that $\ww$ \emph{left reverses} to $\ww'$, denoted by~$\ww\rev\ww'$, if there exists a sequence 
 $\ww=\ww_1,...,\www_\ell=\ww'$ of $\SBKL\nn$-words such that $\ww_\kk\rev^{1}\ww_{\kkp}$ for $\kk\in[1,\ell-1]$.
\end{defi}

\begin{exam}
\label{E:Rev}
 The word $\uu=\aa12\aa23\aa12\aa13\inv$ left reverses to $\aa23\aa23$ as the following left reversing sequence shows (left reversed factor are underlined)
 $$
 \aa12\aa23\underline{\aa12\aa13\inv}\rev^{1}\aa12\underline{\aa23\aa13\inv}\aa23\rev^{1}\underline{\aa12\aa12\inv}\aa23\aa23\rev^1\aa23\aa23,
 $$
which is denoted by $\aa12\aa23\aa12\aa13\inv\rev{}\aa23\aa12$.
\end{exam}

\begin{defi}
  For $\ww$ an $\SBKL\nn$-word, we denote by $D(\ww)$ and $N(\ww)$ the unique $\SPBKL\nn$-word,  if there exist, such that $\ww\rev D(\ww)\inv\,N(\ww)$. 
  The word $N(\ww)$ is the \emph{left numerator} of $\ww$ while the word $D(\ww)$ is its \emph{left denominator}.
 \end{defi}

 Reconsidering Example~\ref{E:Rev}, we obtain that the left denominator of $\uu$ is $D(\uu)=\varepsilon$ and that is left  numerator its $N(\uu)=\aa23\aa23$.

A consequence of Example~8 and Proposition~3.5 of \cite{Dehornoy1999} based on \cite{Dehornoy1997a} and \cite{Birman1998} is 
that $N(\ww)$ and $D(\ww)$ exists for any $\SBKL\nn$-word $\ww$.
We obtain also the following result: 

\begin{prop}
\label{P:Div}
 Let $\ww$ be an $\SPBKL\nn$-word and $\aa\indi\indii$ be in $\SPBKL\nn$. 
 The braid $\overline{\ww}$ is right divisible by $\aa\indi\indii$ if and only if $D(\ww\cdot\aa\indi\indii\inv)$ is empty.
\end{prop}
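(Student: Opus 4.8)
The plan is to read off the claim from the general theory of left-reversing in left-complemented (indeed Garside) monoids, specialised to $\BKL\nn$ via Lemma~\ref{L:Comp}. The crucial fact, due to Dehornoy (Lemma~1.1 and Lemma~4.3 of \cite{Dehornoy1999}, and Lemma~1.1 of \cite{Dehornoy1997a}), is that for a left lcm selector the left-reversing of a positive--negative word $\uu\cdot\vv\inv$ (with $\uu,\vv$ $\SPBKL\nn$-words) terminates in a word of the form $\denL\inv\,\numL$ with $\denL,\numL$ positive, and that then $\overline{\uu\cdot\vv\inv}=\overline{\denL}\inv\,\overline{\numL}$, i.e. $\overline{\vv}\,\overline{\denL}=\overline{\uu}\,\overline{\numL}$ and in fact $\overline{\uu}\,\overline{\numL}$ is the left lcm of $\overline{\uu}$ and $\overline{\vv}$. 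Applying this with $\vv=\aa\indi\indii$ gives: after left-reversing $\ww\cdot\aa\indi\indii\inv$ to $D(\ww\cdot\aa\indi\indii\inv)\inv\,N(\ww\cdot\aa\indi\indii\inv)$, one has the relation $\aa\indi\indii\cdot\overline{D(\ww\cdot\aa\indi\indii\inv)}=\overline{\ww}\cdot\overline{N(\ww\cdot\aa\indi\indii\inv)}$, and the left-hand side realises the left lcm of $\aa\indi\indii$ and $\overline{\ww}$.

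From here the proof is a short two-way argument. Suppose first that $D(\ww\cdot\aa\indi\indii\inv)=\varepsilon$. Then the displayed relation reads $\aa\indi\indii=\overline{\ww}\cdot\overline{N(\ww\cdot\aa\indi\indii\inv)}$; wait---more carefully, $\aa\indi\indii$ is the left lcm of $\aa\indi\indii$ and $\overline{\ww}$, so $\overline{\ww}$ left-divides $\aa\indi\indii$. That is the wrong side, so instead I use that $\overline{\uu}\,\overline{\numL}$ is the left lcm, i.e. $\overline{\ww}\cdot\overline{N(\ww\cdot\aa\indi\indii\inv)}$ is the left lcm of $\overline{\ww}$ and $\aa\indi\indii$, and $\overline{D(\ww\cdot\aa\indi\indii\inv)}$ is the cofactor with $\aa\indi\indii\cdot\overline{D}=\text{lcm}$. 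When $D$ is trivial, $\aa\indi\indii$ itself equals the left lcm of $\overline{\ww}$ and $\aa\indi\indii$; since $\overline{\ww}$ divides its own lcm with $\aa\indi\indii$ on the left, we would only get a left-divisibility statement. To get the \emph{right}-divisibility claimed, I should instead invoke the mirror (right-reversing / right lcm) version, or---cleaner---note that $\BKL\nn$ being a Garside monoid, $\aa\indi\indii$ right-divides $\overline{\ww}$ iff the right lcm of $\overline{\ww}$ and $\aa\indi\indii$ equals $\overline{\ww}$, and the left-reversing computation of $\ww\cdot\aa\indi\indii\inv$ computes exactly this data: $\overline{\ww}\cdot\overline{N}=$ right lcm is false; rather $\ww\cdot\aa\indi\indii\inv\equiv D\inv N$ means $\overline{\ww}=\overline{D}\inv\,\overline{N}\,\overline{\aa\indi\indii}$... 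I will state it as: $\ww\cdot\aa\indi\indii\inv\rev D\inv N$ gives $\overline{\ww}\cdot\aa\indi\indii\inv=\overline{D}\inv\cdot\overline{N}$, hence $\overline{D}\cdot\overline{\ww}=\overline{N}\cdot\aa\indi\indii$. Thus $\overline{N}\cdot\aa\indi\indii$ is right-divisible by---no, this says $\aa\indi\indii$ right-divides $\overline{D}\cdot\overline{\ww}$; when $\overline{D}=1$ this is precisely ``$\aa\indi\indii$ right-divides $\overline{\ww}$.'' Conversely, if $\aa\indi\indii$ right-divides $\overline{\ww}$, say $\overline{\ww}=\overline{\www}\cdot\aa\indi\indii$, then $\ww\cdot\aa\indi\indii\inv\equiv\www$, a positive word, so the (unique!) reversed form $D\inv N$ of $\ww\cdot\aa\indi\indii\inv$ must have $\overline{D}=1$; since $D$ is a positive word equivalent to the trivial braid, $D=\varepsilon$ by the embedding of the monoid in its group (or by $\BKL\nn$ being cancellative and atomic).

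So the key steps, in order, are: (1) recall that $f_n$ is a left lcm selector for the Garside monoid $\BKL\nn$ (Lemma~\ref{L:Comp} together with Example~8 of \cite{Dehornoy1999}), so $D(\ww\cdot\aa\indi\indii\inv)$ and $N(\ww\cdot\aa\indi\indii\inv)$ exist and satisfy $\ww\cdot\aa\indi\indii\inv\rev D(\ww\cdot\aa\indi\indii\inv)\inv N(\ww\cdot\aa\indi\indii\inv)$; (2) translate this reversing into the braid-group identity $\overline{D}\cdot\overline{\ww}=\overline{N}\cdot\aa\indi\indii$ in $\BB\nn$; (3) for the ``if'' direction, assume $\overline{\ww}$ is right-divisible by $\aa\indi\indii$, write $\ww\cdot\aa\indi\indii\inv\equiv\www$ for a positive word $\www$, use uniqueness of the reversed form plus cancellativity of $\BKL\nn$ to conclude $D(\ww\cdot\aa\indi\indii\inv)=\varepsilon$; (4) for the ``only if'' direction, assume $D(\ww\cdot\aa\indi\indii\inv)=\varepsilon$ and read off $\overline{\ww}=\overline{N(\ww\cdot\aa\indi\indii\inv)}\cdot\aa\indi\indii$ directly from step (2). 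The main obstacle is purely bookkeeping: being careful about which side ``numerator'' and ``denominator'' sit on and making sure the reversing identity is transcribed with the correct inverse placement, since a sign error there swaps left- and right-divisibility. Once that is pinned down, both implications are one line each. I should also make sure, in step (3), to justify ``a positive word representing the trivial braid is empty'' --- this is immediate from atomicity of $\BKL\nn$ (every generator has length $1$ under the length homomorphism to $\NN$ induced by $\aa\indi\indii\mapsto 1$, which is well-defined because all relations in Proposition~\ref{P:PresBKL} are length-homogeneous).
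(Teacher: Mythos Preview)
The paper does not actually prove this proposition: it is stated immediately after the remark that $\BKL\nn$ is a Garside monoid with $f_n$ a left lcm selector (Example~8 of \cite{Dehornoy1999}) and that Lemma~4.3 of \cite{Dehornoy1999} guarantees existence of $D(\ww)$ and $N(\ww)$, and is presented simply as another consequence of that same package of results. Your proposal follows exactly this route---invoke the cited Garside/reversing machinery---but attempts to spell out the two implications explicitly. So there is no difference in approach to discuss; the question is only whether your spelled-out argument is sound.

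After the (rather meandering) first two paragraphs you arrive at the correct identity $\overline{D}\cdot\overline{\ww}=\overline{N}\cdot\aa\indi\indii$, and your ``only if'' direction (step~(4)) is then immediate and correct. Your ``if'' direction (step~(3)), however, has a gap. You argue: if $\aa\indi\indii$ right-divides $\overline{\ww}$ then $\ww\cdot\aa\indi\indii\inv$ is equivalent to some positive word $\www$, and ``by uniqueness of the reversed form'' this forces $\overline{D}=1$. But uniqueness of $D$ and $N$ is uniqueness \emph{for the given input word}, not invariance under $\equiv$; the mere existence of a positive representative $\www$ of the same braid does not by itself force the reversing of $\ww\cdot\aa\indi\indii\inv$ to terminate with empty denominator. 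What you need here is precisely the left-lcm property you stated and then abandoned: since $f_n$ is a left lcm selector, $\overline{D}\cdot\overline{\ww}$ is the left lcm of $\overline{\ww}$ and $\aa\indi\indii$; if $\aa\indi\indii$ already right-divides $\overline{\ww}$ then $\overline{\ww}$ is itself a common left multiple, so the lcm $\overline{D}\cdot\overline{\ww}$ right-divides $\overline{\ww}$, and cancellativity gives $\overline{D}=1$, whence $D=\varepsilon$ by your length-homogeneity remark. With that correction the argument is complete.
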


Since the denominator of $\aa12\aa23\aa12\aa13\inv$ is empty, the braid $\aa13$ right divides the braid $\aa12\aa23\aa12$.

\section{Characterization of rotating normal words}

The aim of this section is to give a syntactical characterization of $\nn$-rotating words among $\SPBKL\nn$-words.

\begin{defi}
We say that a braid $\br$ in $\BKL\nn$ contains an $\aa\indi\nn$-barrier if its rotating normal form does.
\end{defi}
 
 \begin{lemm}
 \label{L:UniqueLastLetter}
 Assume that $\nn\geq3$, $\br$ belongs to $\BKL\nno$ and that the $\BKL\nno$-tail of $\ff\nn(\br)$ is trivial.
 Then every $\SPBKL\nno$-word representing $\br$ ends with $\last\br$.
\end{lemm}

\begin{proof}
 Let $\uu$ be an $\SPBKL\nno$-word representing $\br$. 
 As the $\BKL\nno$-tail of $\ff\nn(\br)$ is trivial, the last letter $\last\uu$ of $\uu$ not belongs to $\SPBKL\nnt$ 
 and so $\last\uu$ is $\aa\indi\nno$ for some integer $\indi<\nno$.
 Assume now $\vv$ is an other $\SPBKL\nno$-word representing~$\br$. 
 For the  same reason as $\uu$, we have $\last\vv=\aa\indii\nno$ for some $\indii<\nno$.
 Since the two braids $\aa\indi\nno$ and $\aa\indii\nno$ are right divisors of $\br$, their left lcm is also
 a right divisor of $\br$.
 Assume for a  contradiction that $\indi$ and $\indii$ are different.
 The braid $\br$ is then right divisible by $\aa\indi\indii\aa\indii\nno$, which is 
 the left lcm of $\aa\indi\nno$ and $\aa\indii\nno$. 
 Since $\aa\indi\indii\aa\indii\nno$ is equivalent to $\aa\indi\nno\aa\indi\indii$, the braid $\aa\indi\indii$ is also a 
 right divisor of $\br$.
 In particular $\aa\indip\indiip$, with $\indiip<n$, is  a right divisor of $\ff\nn(\br)$,
 which is impossible since the $\BKL\nno$-tail of~$\ff\nn(\br)$ is supposed to be trivial.
 Therefore, every $\SPBKL\nno$-word representing $\br$ ends with the same letter, namely~
$\last\br$.
\end{proof}

We conclude that, under some hypotheses, the last letter of a word is a braid invariant.

 \begin{defi}
 \label{D:Ladder}
For $\nn\geq 3$ and $2\leq \indi \leq \nno$, we say that an $\nn$-rotating word $\ww$ is an \emph{$\aa\indi\nn$-ladder} is there exist a decomposition 
$$
w=v_0\,x_1\,v_1\,\ldots\,v_{h-1}\,x_h\,v_h,
$$
a sequence $p=j(0)<j(1)<...<j(h)=\nno$ and a sequence $i$ such that

$(i)$ for each $k\leq h$, the letter $x_k$ is $\aa{i(k)}{j(k)}$ with $i(\kk)<j(\kko)<j(\kk)$,

$(ii)$ for each $k<h$, the word $v_k$ contains no $\aa{j(k)}\nn$-barrier,
\end{defi}

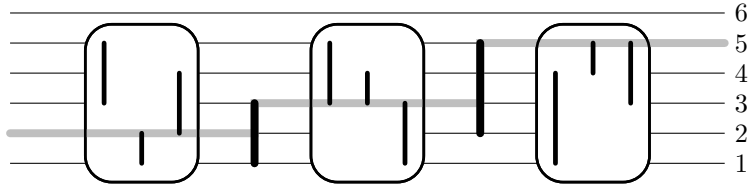
\begin{figure}[h!]
\label{F:Ladder}
  \begin{tikzpicture}[x=0.05cm,y=0.05cm]
    \foreach \v in {1,2,3,4,5,6} \draw(0,\v*8-8) -- (190,\v*8-8) node[right]{\small $\v$};
    \foreach \x in {0,1,2} \filldraw[rounded corners=10,fill=white,line width=1](60*\x+20,-5) rectangle ++ (30,42);
    \draw[line width=3,color=lightgray,line cap=round](0,8) -- (65,8) -- (65,16) -- (125,16) -- (125,32) -- (190,32);
  \foreach \x in {0,1,2} \draw[rounded corners=10,line width=1](60*\x+20,-5) rectangle ++ (30,42);
    \draw[line width=3,line cap=round](65,0) -- (65,16);
    \draw[line width=3,line cap=round](125,8) -- (125,32);
    \draw[line width=1.8,line cap=round] (25,16) -- (25,32);
    \draw[line width=1.8,line cap=round] (35,0) -- (35,8);
    \draw[line width=1.8,line cap=round] (45,8) -- (45,24);
    \draw[line width=1.8,line cap=round] (85,16) -- (85,32);
    \draw[line width=1.8,line cap=round] (105,0) -- (105,16);
    \draw[line width=1.8,line cap=round] (95,16) -- (95,24);
    \draw[line width=1.8,line cap=round] (145,0) -- (145,24);
    \draw[line width=1.8,line cap=round] (155,24) -- (155,32);
    \draw[line width=1.8,line cap=round] (165,16) -- (165,32);
  \end{tikzpicture}
  \caption{An $\aa26$-ladder. The gray line starts at position~$1$ and goes up to position $5$ using the bar of the ladder. The empty spaces between bars in the ladder are represented by a framed box. In such boxes the vertical line representing the letter $\aa\ii\jj$ does not cross the gray line. The bar of the ladder are represented by black thick vertical lines.}
\end{figure}

Condition $(ii)$ is equivalent to: for each $\kk\leq \hh$, the letter $\xx_\kk$ is an $\aa{j(\kko)}\nn$-barrier of type $\aa{..}{j(\kk)}$.

An immediate adaptation of Proposition~3.9 of \cite{Fromentin2008a} is :

\begin{lemm}
 \label{L:Barrier:Ladder}
 Assume that $\nn\geq 3$, $\br$ belongs to $\BKL\nno$, the $\BKL\nno$-tail of $\ff\nn(\br)$ is trivial and $\br$ contains an  $\aa\indi\nn$-barrier for some $2\leq \indi\leq \nnt$. Then the normal form of $\br$ is an $\aa\indi\nn$-ladder.
\end{lemm}

In order to obtain a syntactical characterization of $\nn$-rotating words we want a local version of condition~\eqref{E2:P:Split} characterizing a $\ff\nn$-splitting.
The following result is the first one in this way.

\begin{prop}
\label{P:Equiv}
  For $\br\in\BKL\nno$ and $\indi$ an integer satisfying $2\leq\indi\leq\nnt$ there is equivalence between

  $(i)$ the $\BKL\nno$-tail of $\ff\nn(\aa\indi\nn\br)$ is trivial,

  $(ii)$ the $\BKL\nno$-tail of $\ff\nn(\br)$ is trivial and $\br$ contains an $\aa\indi\nn$-barrier,

  $(iii)$ the only $\SPBKL\nn$-letter that right divides $\aa\indi\nn\br$ is $\last{\br}$, which is of type~$\aa{..}\nno$.
\end{prop}

Our proof of Proposition~\ref{P:Equiv} rests on the following Lemma.

\begin{lemm}
 \label{L:Reversing}
For $n\geq3$, $\uu$ an $\SPBKL\nno$-word and $\indi\in[1,\nno]$, the left denominator $D(\uu\aa\indi\nn\inv)$ is not empty. More precisely, $D(\uu\aa\indi\nn\inv)\inv$ begins with $\aa\indii\nn\inv$ satisfying $\indii\leq \indi$.
\end{lemm}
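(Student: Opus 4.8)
The claim is about what happens when we left-reverse the word $\uu\aa\indi\nn\inv$, where $\uu$ is a word on the strictly smaller alphabet $\SPBKL\nno$. The key structural fact is that none of the letters appearing in $\uu$ involves strand $\nn$, whereas the trailing inverse letter $\aa\indi\nn\inv$ does. When we left-reverse, we repeatedly rewrite a factor $\aa\kkk\kkkk\,\aa{p'}{n}\inv$ (some letter of $\uu$, then the current "index $n$" inverse letter on the right) using $f_\nno(\aa\kkk\kkkk,\aa{p'}n)\inv f_\nno(\aa{p'}n,\aa\kkk\kkkk)$ --- wait, more precisely by $f_n(x,y)\inv f_n(y,x)$ with $x=\aa\kkk\kkkk$ and $y=\aa{p'}n$. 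The plan is to track the single "active" inverse letter of type $\aa{..}n\inv$ as it migrates leftward through $\uu$, showing it never disappears and its first index never increases.

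First I would set up an induction on $\abs\uu$. The base case $\uu=\ew$ is trivial: $D(\aa\indi\nn\inv)=\aa\indi\nn$, wait, we need $D(\uu\aa\indi\nn\inv)\inv$ begins with $\aa\indii\nn\inv$; for $\uu=\ew$ the word $\aa\indi\nn\inv$ is already of the form $D\inv N$ with $D=\aa\indi\nn$ and $N=\ew$, so $D\inv = \aa\indi\nn\inv$ and indeed $\indii=\indi$ works. For the inductive step, write $\uu = x\,\uu'$ with $x=\aa\kkk\kkkk$ a letter of $\SPBKL\nno$ (so $\kkk<\kkkk\le\nno$). By the inductive hypothesis applied to $\uu'$, we have $\uu'\aa\indi\nn\inv \rev D(\uu'\aa\indi\nn\inv)\inv N(\uu'\aa\indi\nn\inv)$ with $D(\uu'\aa\indi\nn\inv)$ beginning with $\aa{q'}n$ for some $q'\le\indi$, say $D(\uu'\aa\indi\nn\inv)=\aa{q'}n\, D'$ where $D'$ is an $\SPBKL\nno$-word (it must be: every letter of a left denominator that "uses" strand $n$ must be the first one, since subsequent reversals only touch the already-produced denominator against further letters, and strand $n$ appears only once). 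Hence $x\uu'\aa\indi\nn\inv \rev x\,\aa{q'}n\inv\, D'\inv N(\uu'\aa\indi\nn\inv)$, and it remains to reverse the single factor $x\,\aa{q'}n\inv = \aa\kkk\kkkk\,\aa{q'}n\inv$.

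Now I would do the one-step analysis of $f_n(\aa\kkk\kkkk,\aa{q'}n)\inv f_n(\aa{q'}n,\aa\kkk\kkkk)$, going through the cases of Lemma~\ref{L:Comp}. Since $\kkkk\le\nno < n$ we have $\kkkk\ne n$ and $\kkk\ne n$, so the cases "$\aa\indi\indii=\aa\indiii\indiv$", "$\indi=\indiii$ with $\indii>\indiv$", "$\indii=\indiv$" are excluded on the $n$-index side except where the matching is against the endpoint $n$ itself; concretely the relevant outcomes for $f_n(\aa\kkk\kkkk,\aa{q'}n)$ are: $\aa\kkk\nn$ when $\kkkk=q'$; $\aa{q'}\kkkk$ when $\kkk=q'$ (impossible here since then $q'<\kkkk<n$, fine, this is the "otherwise" or a sub-case) --- I would carefully enumerate which gives $\aa{q'}n$ back as the last letter of $D$. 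The crucial observation, which the paper flags explicitly just before Lemma~\ref{L:Reversing}, is: the last letter of $f_n(\aa\kkk\kkkk,\aa{q'}n)$ is of the form $\aa\indv\nn$ with $\indv\le\kkk$ (applying that remark with $\indii=\kkkk<n=\indiv$, which holds since $\kkkk\le\nno$). Therefore the new left denominator still ends... no --- $D(\ww)\inv$ is read with the denominator reversed and inverted, so the *first* letter of $D\inv$ is the inverse of the *last* letter of $D$; since reversing produces $f_n(\aa\kkk\kkkk,\aa{q'}n)\inv$ as a prefix of the denominator-part-inverse, the first letter of the new $D(\uu\aa\indi\nn\inv)\inv$ is the inverse of the last letter of $f_n(\aa\kkk\kkkk,\aa{q'}n)$, namely $\aa\indv\nn\inv$ with $\indv\le\kkk$. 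But I also need $\indv\le\indi$, which follows since I can additionally track that $\indv\le q'$ as well in the relevant case, or simply note $\indv\le\kkk\le\nno$ is not enough --- so I must split: if $\kkkk=q'$ then $f_n(\aa\kkk\kkkk,\aa{q'}n)=\aa\kkk\nn$ and $\indv=\kkk<\kkkk=q'\le\indi$; otherwise the reversal of $\aa\kkk\kkkk\aa{q'}n\inv$ leaves the $\aa{q'}n\inv$ essentially in place (possibly with $q'$ unchanged or decreased), so $\indv\le q'\le\indi$. Either way $D(\uu\aa\indi\nn\inv)\inv$ begins with $\aa\indv\nn\inv$ and $\indv\le\indi$; non-emptiness is then automatic.

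**Main obstacle.** The delicate point is the bookkeeping that the left denominator of $\uu\aa\indi\nn\inv$ retains the shape $\aa{q'}n\,(\SPBKL\nno\text{-word})$ throughout the reversal --- i.e. that strand $\nn$ never re-enters except in that single leading position, so that after peeling $x$ off $\uu$ the one-step reversal $\aa\kkk\kkkk\,\aa{q'}n\inv \rev f_n(\aa\kkk\kkkk,\aa{q'}n)\inv f_n(\aa{q'}n,\aa\kkk\kkkk)$ really is the only place where the $n$-index interacts. This needs the confluence/uniqueness of left-reversing (guaranteed here because $f_n$ is a left lcm selector, as stated before Proposition~\ref{P:Div}) so that I may choose a convenient reversing order, plus the invariant, proved alongside the main induction, that in $N(\uu'\aa\indi\nn\inv)$ no letter of type $\aa{..}n$ survives --- which is exactly the content of the "remarkable property" of $f_n$ quoted in the paper: whenever a letter $\aa\indi\indii$ is reversed past $\aa\indiii\indiv$ with $\indii<\indiv$, the strand-$\indiv$ index stays on the denominator side and moves left, never leaking into the numerator. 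Once that invariant is in hand, the case analysis above is routine.
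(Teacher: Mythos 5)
Your proof is correct and follows essentially the same route as the paper's: both arguments come down to the identical case analysis of $f_n(\aa\kkk\kkkk,\aa{q'}\nn)$ (the paper's displayed formula~\eqref{E:Reversing}) together with the invariant that the leftmost negative letter remains of the form $\aa{..}\nn\inv$ with non-increasing first index, the only difference being that the paper inducts along the reversing sequence while you induct on $\abs\uu$ and invoke confluence of left-reversing. The one wrinkle is your parenthetical claim that the tail of the denominator is an $\SPBKL\nno$-word, which is neither obvious nor needed: all that matters is that the leading letter of $D(\uu\aa\indi\nn\inv)\inv$, once created at the extreme left of the word, can never be altered by later reversal steps, so the first letter of $f_n(x,\aa{q'}\nn)\inv$ survives to the end.
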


\begin{proof}
 Assume that $\ww_1,...,\ww_\ell$ is a reversing sequence from the word $\ww_1=\uu\aa\indi\nn\inv$ to the word $D(\ww_1)\inv N(\ww_1)$. For $\kk\in[1,\ell]$ we denote by $\yy_\kk$ the leftmost negative letter in $\ww_\kk$. Each reversing step consists in replacing a factor $\xx\yy\inv$ of $\ww_\kk$ by $f_\nn(\xx,\yy)\inv f_\nn(\yy,\xx)$. If for $\kk\in[1,\ell]$ the reversed factor of $\ww_\kk$ does not contains $\yy_\kk\inv$ then $\yy_\kkp$ equals $\yy_\kk$. Assume now that the reversed factor is $\xx\yy_\kk\inv$ with $\yy_\kk=\aa\indiii\nn$. Lemma~\ref{L:Comp} implies
 \[
 f_n(x,\yy_\kk)=f_n(\aa\ii\jj,\aa\indiii\nn)=
 \begin{cases}
  \aa\ii\nn&\text{for $\jj=\indiii$,}\\
  \aa\indiii\jj\aa\ii\nn&\text{for $\ii<\indiii<\jj$,}\\
  \aa\indiii\nn&\text{otherwise,}
 \end{cases}
 \]
 which gives in particular
 \begin{equation}
 \label{E:Reversing}
 \xx\yy_\kk\inv=\aa\ii\jj\aa\indiii\nn\inv\rev
  \begin{cases}
   \aa\ii\nn\inv...&\text{for $\ii<\indiii\leq\jj$,}\\
   \aa\indiii\nn\inv...&\text{otherwise.}
  \end{cases}
 \end{equation}                            
 It follows that $\yy_\kkp$ is equal to $\aa\indiv\nn$ for some $\indiv\leq\indiii$. Eventually we obtain that  $\uu\aa\indi\nn\inv$ left reverses to $\aa\indii\nn\inv...$ with the relation $\indii\leq\indi$ and so the desired property on~$D(\uu\aa\indi\nn)$ holds.
\end{proof}

\begin{proof}[Proof of Proposition~\ref{P:Equiv}]
Assume $(i)$. 
As the the $\BKL\nno$-tail of $\ff\nn(\br)$ is also a right divisor of $\ff\nn(\aa\indi\nn\,\br)$ the first statement of $(ii)$ holds. The second statement is Lemma~\ref{L:Barrier}. 
Let us prove that $(iii)$ implies $(i)$. By hypothesis  the last letter of $\br$ is $\aa\indiio\nno$ for some $\indii$. As the only $\SPBKL\nn$-letter that right divides~$\ff\nn(\aa\indi\nn\br)$ is $\ff\nn(\aa\indiio\nno)=\aa\indii\nn$, the $\BKL\nno$-tail of $\ff\nn(\aa\indi\nn\,\br)$ must be trivial.
 
 We now prove  $(ii)\Rightarrow(iii)$. Since the $\BKL\nno$-tail of~$\ff\nn(\br)$ is trivial, the letter~$\last\br$ must be of type $\aa{..}\nno$. We denote by $\ww$ the rotating normal from of $\br$.
 Let $\aa\indiii\indiv$ be an $\SPBKL\nn$-letter different from $\last\br$. 
 We will show that $\aa\indiii\indiv$ cannot be a right divisor of $\aa\indi\nn\,\br$.
 Assume first $\indiv\leq\nno$. By Lemma~\ref{L:UniqueLastLetter}, $\aa\indiii\indiv$ is not a right divisor of $\br$. Proposition~\ref{P:Div} implies that the word~$D(\ww\,\aa\indiii\indiv\inv)$ must be non empty. As the reversing of an $\SBKL\nno$-word is also an $\SBKL\nno$-word, there exists a letter~$\aa\indv\indvi$ with $\indvi<n$ such that 
 \[
 \aa\indi\nn\,\ww\,\aa\indiii\indiv\inv\rev\aa\indi\nn\,\aa\indv\indvi\inv...,
 \]
 holds. Clearly, the braid $\aa\indv\indvi$ is not a right divisor of $\aa\indi\nn$ (since we have $\indvi<\nn$). 
 Therefore, by Proposition~\ref{P:Div}, the left denominator of $\aa\indi\nn\,\ww\,\aa\indiii\indiv\inv$ is not empty, and we conclude that $\aa\indiii\indiv$ is not a right divisor of $\aa\indi\nn\br$.

 Assume now $\indiv=\nn$. Hypotheses on $\br$ plus Lemma~\ref{L:Barrier:Ladder} imply that $\ww$ is an $\aa\indi\nn$-ladder. Following Definition~\ref{D:Ladder}, we write 
 $$\ww=\vv_0\,\xx_1\,\vv_1\,...\,\vv_{\hho}\,\xx_\hh\,\vv_{\hh}.$$	
 By Lemma~\ref{L:Reversing}, there exist two  maps $\eta$ and $\mu$ from $\NN$ to itself such that 
 $$
 \ww\aa\indiii\nn\inv=\ww_\hh\aa{\eta(\hh)}\nn\inv\rev\www_\hh\aa{\mu(\hh)}\nn\inv\cdots\rev...\rev\ww_0\aa{\eta(0)}\nn\inv\cdots\rev\www_0\aa{\mu(0)}\nn\inv,
 $$
 where for all $\kk\in[0,\hh]$,
 \begin{align*}
 \ww_\kk&=\vv_0\,\xx_1\,\vv_1\,...\,\vv_{\kko}\,\xx_\kk\,\vv_\kk,\\
 \www_\kk&=\vv_0\,\xx_1\,\vv_1\,...\,\vv_{\kko}\,\xx_\kk.
 \end{align*}
 By construction $\ww_0$ is $\vv_0$ while $\www_0$ is the empty word.
 Lemma~\ref{L:Reversing} implies
 \begin{equation}
 \label{E:Inter1}
 \mu(0)\leq\eta(0)\leq\mu(1)\leq...\leq\mu(\hh)\leq\eta(\hh)=\indiii.
 \end{equation}
  Following Definition~\ref{D:Ladder} we write $\xx_\kk=\aa{\ii(\kk)}{\jj(\kk)}$.
 We will now prove by induction
 \begin{equation}
 \label{E:Inter2}
 \text{for all $\kk\in[0,\hho]$, $\mu(\kkp)\leq j(\kkp)\Rightarrow \eta(\kk)<j(\kk)$}
 \end{equation}
 Let $\kk\in[0,\hho]$ and assume $\mu(\kkp)\leq j(\kkp)$. Definition~\ref{D:Ladder} $(i)$ guarantees the relation $\ii(\kkp)<\jj(\kk)<\jj(\kkp)$.
 For $\mu(\kkp)\leq \ii(\kkp)$ we have 
 $$\eta(\kk)\leq\mu(\kkp)\leq \ii(\kkp)<\jj(\kk),$$
 and we are done in this case.
 The remaining case is $\mu(\kkp)>\ii(\kkp)$. 
 By relation~\eqref{E:Reversing}, with $\ii=\ii(\kkp)$, $\jj=\jj(\kkp)$ and $r=\eta(\kkp)$ we obtain 
 $$\xx_{\kkp}\,\aa{\mu(\kkp)}\nn\inv=\aa{\ii(\kkp)}{\jj(\kkp)}\,\aa{\mu(\kkp)}\nn\inv\rev \aa{\ii(\kkp)}\nn\inv \vv$$
 with some $\SBKL\nn$-word $\vv$.
 In particular, we have $\eta(\kk)=\ii(\kkp)<\jj(\kk)$ and~\eqref{E:Inter2} is established. 
 For $\kk=\hho$ the left hand member of \eqref{E:Inter2} is satisfied since $j(\hh)$ is equal to $\nno$ and $\indiii\leq\nno$ holds by definition of $\indiii$. 
 Properties~\eqref{E:Inter1} and \eqref{E:Inter2} imply $\mu(\kk)<\jj(\kk)$ for all $\kk\in[0,\hht]$.
 In particular we have $\mu(0)<j(0)=\indi$ together with $\ww\aa\indiii\nn\inv\rev\aa{\mu(0)}\nn\inv\cdots$. 
 As $\aa{\mu(0)}\nn$ can not be a right divisor of $\aa\indi\nn$ it follows that the left denominator of $\aa\indi\nn\,\ww\,\aa\indiii\nn\inv$ is also non empty and so that $\aa\indiii\nn$ is not a right divisor of $\aa\indi\nn\br$.
\end{proof}

As the reader can see, the case $\indi=\nno$ is excluded from Proposition~\ref{P:Equiv}.
It is the aim of the following result.

\begin{prop}
 \label{P:Equiv2}
 For $\br$ a non-trivial braid of $\BKL\nno$ there is equivalence between 
 
 $(i)$ the $\BKL\nno$-tail of $\ff\nn(\aa\nno\nn\br)$ is trivial,
 
 $(ii)$ the only $\SPBKL\nn$-letter right divising $\aa\nno\nn\br$ is $\last{\br}$ which is of type $\aa{..}\nno$.
\end{prop}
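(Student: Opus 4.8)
The plan is to treat the two implications separately. The implication $(i)\Rightarrow(ii)$ is immediate from Garside divisibility: by~\eqref{E:Phi} we have $\ff\nn(\aa\nno\nn)=\aa1\nn$, so $\ff\nn(\aa\nno\nn\br)=\aa1\nn\cdot\ff\nn(\br)$ and $\ff\nn(\br)$ is a right divisor of $\ff\nn(\aa\nno\nn\br)$. Hence the $\BKL\nno$-tail of $\ff\nn(\br)$, being a right divisor of $\ff\nn(\br)$ lying in $\BKL\nno$, is a right divisor of $\ff\nn(\aa\nno\nn\br)$ lying in $\BKL\nno$; it therefore right-divides the $\BKL\nno$-tail of $\ff\nn(\aa\nno\nn\br)$, which is trivial by $(i)$, and so the $\BKL\nno$-tail of $\ff\nn(\br)$ is trivial.

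For $(ii)\Rightarrow(i)$ I would first reformulate both tail conditions through the automorphism $\ff\nn$. A braid $\eta$ has trivial $\BKL\nno$-tail exactly when no letter of $\SPBKL\nno$ right-divides it, so, $\ff\nn$ being an automorphism of $\BKL\nn$, the braid $\ff\nn(\gamma)$ has trivial $\BKL\nno$-tail if and only if $\gamma$ is right-divisible by no element of $\ff\nn\inv(\SPBKL\nno)$. Reading $\ff\nn$, hence $\ff\nn\inv$, off~\eqref{E:Phi} --- one has $\ff\nn\inv(\aa\indi\indii)=\aa\indio\indiio$ for $\indi\ge2$ and $\ff\nn\inv(\aa1\indii)=\aa\indiio\nn$ --- one sees that $\ff\nn\inv(\SPBKL\nno)$ consists exactly of the letters $\aa\indi\indii$ with $\indii\le\nnt$ together with the letters $\aa\indi\nn$ with $\indi\le\nnt$. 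Consequently hypothesis $(ii)$ amounts to saying that $\br$ is right-divisible by no letter $\aa\indi\indii$ with $\indii\le\nnt$ (the conditions involving strand $\nn$ being automatic since $\br\in\BKL\nno$), and conclusion $(i)$ amounts to saying that $\aa\nno\nn\br$ is right-divisible neither by any letter $\aa\indi\indii$ with $\indii\le\nnt$ nor by any letter $\aa\indi\nn$ with $\indi\le\nnt$. It remains to deduce the latter statement from the former.

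I would carry this out by left-reversing. Let $\ww$ be the rotating normal form of $\br$; since $\br\in\BKL\nno$, it is an $\SPBKL\nno$-word with $\overline{\ww}=\br$. Fix one of the excluded letters $\xx$; by Proposition~\ref{P:Div} it suffices to prove $D(\aa\nno\nn\,\ww\,\xx\inv)\neq\ew$, which I would obtain by left-reversing the suffix $\ww\,\xx\inv$ first and then absorbing the prefix $\aa\nno\nn$. If $\xx=\aa\indi\indii$ with $\indii\le\nnt$, then $\xx\in\SPBKL\nno$, so $\ww\,\xx\inv$ is an $\SBKL\nno$-word whose left-reversal is again an $\SBKL\nno$-word, and $\br$ is not right-divisible by $\xx$ by the reformulated $(ii)$; hence $D(\ww\,\xx\inv)\neq\ew$ by Proposition~\ref{P:Div}, so $\ww\,\xx\inv$ left-reverses to a word beginning with some negative letter $\yy\inv$, $\yy\in\SPBKL\nno$. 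If instead $\xx=\aa\indi\nn$ with $\indi\le\nnt$, then Lemma~\ref{L:Reversing} with $\uu=\ww$ gives that $\ww\,\aa\indi\nn\inv$ left-reverses to a word beginning with $\aa\indiii\nn\inv$ for some $\indiii\le\indi\le\nnt<\nno$; set $\yy=\aa\indiii\nn$. In both cases $\yy\neq\aa\nno\nn$, so, from $\aa\nno\nn\,\ww\,\xx\inv\rev\aa\nno\nn\,\yy\inv\cdots$ and $f_\nn(\aa\nno\nn,\yy)\neq\ew$ (Lemma~\ref{L:Comp}), the word $\aa\nno\nn\,\ww\,\xx\inv$ left-reverses to a word beginning with a negative letter. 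Such a word cannot left-reverse to a positive word --- a left-reversing step only rewrites a positive-then-negative factor, so it never removes the leading letter of a word that begins with a negative letter --- so $D(\aa\nno\nn\,\ww\,\xx\inv)\neq\ew$, and $\xx$ does not right-divide $\aa\nno\nn\br$. As this holds for every excluded $\xx$, assertion $(i)$ follows.

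The one non-routine step is the reformulation: one must identify correctly the set $\ff\nn\inv(\SPBKL\nno)$ of forbidden right divisors --- in particular, that letters of type $\aa{..}\nno$ remain admissible while the letters $\aa\indi\nn$ with $\indi\le\nnt$ do not --- and it is exactly this that makes the case $\indi=\nno$ differ from the cases $\indi\in[2,\nnt]$ of Proposition~\ref{P:Equiv}, in which an $\aa\indi\nn$-barrier must additionally be present. With the reformulation in hand, the left-reversing manipulations are short and use only Lemmas~\ref{L:Comp} and~\ref{L:Reversing}.
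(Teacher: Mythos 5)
Your proof is correct, but the route you take for $(ii)\Rightarrow(i)$ is genuinely different from the paper's. The paper argues directly on representative words: since the only defining relations of the form $\aa\nno\nn\,x=y\,z$ with $x\in\SPBKL\nno$ and $(y,z)\neq(\aa\nno\nn,x)$ are commutations with letters $\aa\indi\indii$, $\indii<\nno$, every word equivalent to $\aa\nno\nn\,\ww$ has the shape $\uu\,\aa\nno\nn\,\vv$ with $\uu\vv\equiv\ww$; combined with Lemma~\ref{L:UniqueLastLetter} this forces every representative word of $\aa\nno\nn\br$ to end with $\last\br$, which is of type $\aa{..}\nno$, and the conclusion follows. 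You instead translate both tail conditions through the automorphism $\ff\nn$, reducing $(i)$ to the non-divisibility of $\aa\nno\nn\br$ by the letters of the explicit set $\ff\nn\inv(\SPBKL\nno)$, and then dispose of each forbidden letter by left-reversing via Proposition~\ref{P:Div}, Lemma~\ref{L:Comp} and Lemma~\ref{L:Reversing} --- that is, by the same machinery as the paper's proof of Proposition~\ref{P:Equiv}, with the ladder and barrier analysis stripped out. Your computation of $\ff\nn\inv(\SPBKL\nno)$ is right, and so is the observation that makes the barrier hypothesis superfluous here: the only letters of type $\aa{..}\nn$ to be excluded are the $\aa\indi\nn$ with $\indi\leq\nnt$, for which Lemma~\ref{L:Reversing} already yields a denominator beginning with some $\aa\indii\nn\inv$ with $\indii\leq\indi\leq\nnt$, hence distinct from $\aa\nno\nn\inv$; no information about where the witnessing index drop occurs inside $\ww$ is needed, unlike in Proposition~\ref{P:Equiv}, where one must rule out $\aa\indi\nn$ itself as a right divisor of $\aa\indi\nn\br$. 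The paper's argument is more elementary (no reversing at all) and in fact yields slightly more, namely that every representative word of $\aa\nno\nn\br$ ends with $\last\br$; yours has the merit of being uniform with the proof of Proposition~\ref{P:Equiv} and of making visible exactly why the case $\indi=\nno$ requires no barrier.
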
	

\begin{proof}
 $(ii)\Rightarrow (i)$ is similar as $(iii)\Rightarrow (i)$ of Proposition~\ref{P:Equiv2}. We now show that $(i)$ implies~$(ii)$. 
 Condition $(i)$ implies in particular that the $\BKL\nno$-tail of $\ff\nn(\br)$ is trivial. 
 It follows that the last letter of $\br$ is of type $\aa{..}\nno$.
 Let $\ww$ be the rotating normal form of $\br$ and $\aa\indiii\indiv$ be an $\SPBKL\nn$-letter different from $\last\br$. 
 For $\indiv\leq\nno$ we follow proof of Proposition~\ref{P:Equiv2} to obtain that $\aa\indiii\indiv$ is not a right divisor or the braid $\aa\nno\nn$.
 Assume now $\indiv=\nn$. 
 By Lemma~\ref{L:Reversing} there exists $\indii\leq \indiii$ such that $\ww\aa\indiii\nn\inv\rev\aa\indii\nn\inv\cdots$ holds and so we obtain $\aa\nno\nn\ww\aa\indiii\nn\inv\rev\aa\nno\nn\aa\indii\nn\inv\cdots$. 
 As, for $\indii\not=\nno$ the braid $\aa\indii\nno$ is not a right divisor of $\aa\nno\nn$ it is sufficient to show $\indii\not=\nno$ for concluding that $\aa\indiii\nn$ not right divides $\aa\nno\nn\br$.
 For $\indiii\leq\nnt$ it is obvious since $\indii\leq\indiii$ holds.
 Assume finally $\indiii=\nno$. We denote by $\aa\indi\nno$ the last letter of $\br$. By~\eqref{E:Reversing} we have $\aa\indi\nno\aa\nno\nn\inv\rev\aa\indi\nn\inv\cdots$ and then Lemma~\ref{L:Reversing} implies $\indii\leq\indi<\nno$, as expected.
\end{proof}

\begin{thrm}
\label{T:Main}
 A finite sequence $(\br_\brdi,...,\br_1)$ of braids in $\BKL\nno$ is the $\ff\nn$-splitting of a braid of $\BKL\nn$ if and only if
 
 $(i)$ for $\kk\geq 3$ and $\kk=\brdi$, the braid $\br_\kk$ is not trivial,
 
 $(ii)$ for $\kk\geq2$, the $\BKL\nno$-tail of $\ff\nn(\br_\kk)$ is trivial,
 
 $(iii)$ if, for $\kk\geq3$, we have $\last{\br}_\kk\not=\aa\nnt\nno$ then $\br_\kko$ contains an $\ff\nn(\last{\br}_\kk)$-barrier.
\end{thrm}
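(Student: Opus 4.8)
The statement is an equivalence, and I would prove the two implications separately.

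\emph{From $\ff\nn$-splitting to $(i)$–$(iii)$.} Let $(\br_\brdi,\dots,\br_1)$ be the $\ff\nn$-splitting of some $\br\in\BKL\nn$. Then $(i)$ is Lemma~\ref{L:LastLetter}$(ii)$. For $(ii)$, fix $\kk\geq2$ and apply \eqref{E2:P:Split} with parameter $\kk-1$: the braid $\ff\nn^{\brdi-\kk+1}(\br_\brdi)\cdots\ff\nn(\br_\kk)$ has trivial $\BKL\nno$-tail, and since $\ff\nn(\br_\kk)$ is its rightmost factor, any $\SPBKL\nno$-letter right-dividing $\ff\nn(\br_\kk)$ would right-divide the whole product, hence divide its (trivial) $\BKL\nno$-tail; so $\ff\nn(\br_\kk)$ has trivial $\BKL\nno$-tail. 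For $(iii)$, fix $\kk\geq3$ with $\last{\br}_\kk\neq\aa\nnt\nno$; since $\br_\kk\neq1$ by $(i)$, Lemma~\ref{L:LastLetter}$(i)$ gives that $\last{\br}_\kk$ is of type $\aa{..}\nno$, so $\ff\nn(\last{\br}_\kk)$ is a letter of type $\aa{..}\nn$ with first index in $[2,\nnt]$. Lemma~\ref{L:Barrier2} applied with index $\kk-1$ then says the rotating normal form of $\br_\kko$ contains a $\ff\nn(\last{\br}_\kk)$-barrier, and since containing such a barrier is a braid property (a braid invariant, not merely a word one, by the first lemma of Section~4), $\br_\kko$ itself contains one.

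\emph{From $(i)$–$(iii)$ to $\ff\nn$-splitting.} Suppose $(\br_\brdi,\dots,\br_1)$ satisfies $(i)$–$(iii)$, put $\br=\ff\nn^\brdio(\br_\brdi)\cdots\ff\nn(\br_2)\,\br_1$, and note $\br\neq1$ by $(i)$ and cancellativity in $\BKL\nn$. By the uniqueness statement in Proposition~\ref{P:Splitting} it suffices to check the three conditions characterizing an $\ff\nn$-splitting of $\br$: the product identity holds by definition of $\br$, $\br_\brdi\neq1$ is $(i)$, and it remains to establish Condition~\eqref{E2:P:Split}. For $1\leq m\leq\brdi$ set $P_m=\ff\nn^{\brdi-m}(\br_\brdi)\cdots\ff\nn(\br_{m+1})\,\br_m$, so that $P_\brdi=\br_\brdi$, $P_1=\br$, and $P_m=\ff\nn(P_{m+1})\,\br_m$; then \eqref{E2:P:Split} is equivalent to the family of assertions $(\star_m)$: ``the $\BKL\nno$-tail of $\ff\nn(P_m)$ is trivial'' for $m\in[2,\brdi]$. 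I would prove $(\star_m)$ by downward induction on $m$, maintaining along the way that $(\br_\brdi,\dots,\br_m)$ is the $\ff\nn$-splitting of $P_m$. The base case $m=\brdi$ holds since $(\star_\brdi)$ is $(ii)$ for $\kk=\brdi$ and $(\br_\brdi)$ is the $\ff\nn$-splitting of $\br_\brdi\in\BKL\nno$ (using $\br_\brdi\neq1$). For the maintained statement at a step $m<\brdi$: given $(\star_{m+1})$ and that $(\br_\brdi,\dots,\br_{m+1})$ is the $\ff\nn$-splitting of $P_{m+1}$, the recursive description of splittings reduces the claim for $P_m=\ff\nn(P_{m+1})\,\br_m$ to showing that $\br_m$ is the $\BKL\nno$-tail of $P_m$; and this follows from $(\star_{m+1})$, because appending a factor of $\BKL\nno$ on the right to a braid with trivial $\BKL\nno$-tail produces a braid whose $\BKL\nno$-tail is exactly that factor (a standard property of the $\BKL\nno$-tail operation).

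\emph{The crux: $(\star_m)$ for $m\in[2,\brdi-1]$.} Here $m+1\geq3$, so $\br_{m+1}\neq1$ by $(i)$, and by the maintained statement the last letter of $P_{m+1}$ is $\last{\br}_{m+1}$, which is of type $\aa{..}\nno$ by Lemma~\ref{L:LastLetter}$(i)$; combined with $(\star_{m+1})$, Lemma~\ref{L:UniqueLastLetter} shows the unique letter right-dividing $\ff\nn(P_{m+1})$ is $\aa\indi\nn:=\ff\nn(\last{\br}_{m+1})$. Writing $\ff\nn(P_{m+1})=\psi\,\aa\indi\nn$ we get $\ff\nn(P_m)=\ff\nn(\psi)\cdot\ff\nn(\aa\indi\nn\,\br_m)$. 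Since $\br_m\in\BKL\nno$, Proposition~\ref{P:Equiv} (when $\aa\indi\nn\neq\aa\nno\nn$) or Proposition~\ref{P:Equiv2} (when $\aa\indi\nn=\aa\nno\nn$) applies to $\aa\indi\nn\,\br_m$: its hypothesis that $\ff\nn(\br_m)$ has trivial $\BKL\nno$-tail is $(ii)$, and, in the first case, its hypothesis that $\br_m$ contains an $\aa\indi\nn$-barrier is $(iii)$ for index $m+1$ — using $\aa\indi\nn=\ff\nn(\last{\br}_{m+1})$ and the equivalence ``$\aa\indi\nn\neq\aa\nno\nn$'' $\iff$ ``$\last{\br}_{m+1}\neq\aa\nnt\nno$''. (The possibility $\br_m=1$, which can occur only for $m=2$, forces $\aa\indi\nn=\aa\nno\nn$ by $(iii)$ and then $\ff\nn(\aa\indi\nn\,\br_m)=\aa1\nn$ has trivial $\BKL\nno$-tail outright.) Hence the $\BKL\nno$-tail of $\ff\nn(\aa\indi\nn\,\br_m)$ is trivial.

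\emph{Removing the left factor, and the main obstacle.} It remains to pass from ``$\ff\nn(\aa\indi\nn\,\br_m)$ has trivial $\BKL\nno$-tail'' to ``$\ff\nn(P_m)=\ff\nn(\psi)\cdot\ff\nn(\aa\indi\nn\,\br_m)$ has trivial $\BKL\nno$-tail'', i.e. to show that the only letter right-dividing $\ff\nn(P_m)$ remains $\ff\nn(\last{\br}_m)$, which is of type $\aa{..}\nn$. This is the step I expect to be the real obstacle, since left-multiplication can in general create new $\SPBKL\nno$-right-divisors. I would argue by left-reversing, mimicking the proof of Proposition~\ref{P:Equiv}: by Proposition~\ref{P:Div}, a letter $\aa\indiii\indiv\neq\ff\nn(\last{\br}_m)$ right-divides $\ff\nn(P_m)$ iff $D\bigl(\ff\nn(P_m)\,\aa\indiii\indiv\inv\bigr)$ is empty; reversing the suffix $\ff\nn(\aa\indi\nn\,\br_m)\,\aa\indiii\indiv\inv$ first, triviality of the $\BKL\nno$-tail of $\ff\nn(\aa\indi\nn\,\br_m)$ together with Lemma~\ref{L:Reversing} yields a left denominator beginning with a negative letter of type $\aa{..}\nn\inv$; one then checks, re-running the reversing bookkeeping of Lemma~\ref{L:Reversing} and using the ladder structure of $\br_m$ supplied by Lemmas~\ref{L:Barrier2} and~\ref{L:Barrier:Ladder}, that reversing further through $\ff\nn(\psi)$ never absorbs or shortens this negative prefix, so the left denominator stays nonempty. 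This gives $(\star_m)$ for all $m\in[2,\brdi]$, hence \eqref{E2:P:Split}, completing the proof.
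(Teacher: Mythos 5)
Your forward implication and the global shape of the converse (downward induction on the index, with Propositions~\ref{P:Equiv} and~\ref{P:Equiv2} giving that the $\BKL\nno$-tail of $\ff\nn(\aa\indi\nn\,\br_m)$ is trivial) agree with the paper. The gap is exactly at the step you yourself flag as ``the real obstacle'': passing from $\ff\nn(\aa\indi\nn\,\br_m)$ to $\ff\nn(P_m)=\ff\nn(\psi)\cdot\ff\nn(\aa\indi\nn\,\br_m)$. Your plan --- re-run the reversing bookkeeping of Lemma~\ref{L:Reversing} through the prefix $\ff\nn(\psi)$, controlled by the ladder structure of $\br_m$ --- cannot work as stated. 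Lemma~\ref{L:Reversing} only governs reversing of an $\SPBKL\nno$-word against a single $\aa\indi\nn\inv$, whereas $\ff\nn(\psi)$ is a general $\SPBKL\nn$-word representing a complicated braid; and the ladder structure of $\br_m$ is already entirely consumed inside Proposition~\ref{P:Equiv} and says nothing about what happens once the negative letter migrates into $\ff\nn(\psi)$. The one piece of information that does control the prefix --- the induction hypothesis applied to the \emph{braid} $\ff\nn(P_{m+1})=\overline{\psi\,\aa\indi\nn}$ --- is never brought to bear on this step, so the assertion that ``the negative prefix is never absorbed'' is left unsupported and is, for an arbitrary prefix, simply false.

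The paper closes this step with a short braid-level argument instead of a word-level reversing analysis, and it runs the induction on the statement \eqref{E:ToProve} (``$\last{\br}_\kk$ is the \emph{only} $\SPBKL\nn$-letter right-dividing $\brbr_\kk$'', which by Lemma~\ref{L:UniqueLastLetter} is the usable form of your $(\star_m)$). Writing $\brbr_\kko=\overline{\ff\nn(\uu)\,\aa\indi\nn\,\vv}$ with $\ff\nn(\uu)\,\aa\indi\nn$ representing $\ff\nn(\brbr_\kk)$ and $\vv$ representing $\br_\kko$, take any letter $\yy\neq\last{\br}_\kko$ and reverse only the suffix: $\vv\,\yy\inv\rev\xx\inv\cdots$ for a single letter $\xx$ with $\xx\neq\aa\indi\nn$, using that $\yy$ does not right-divide $\aa\indi\nn\,\br_\kko$ (which is what Propositions~\ref{P:Equiv}/\ref{P:Equiv2} together with Lemma~\ref{L:UniqueLastLetter} actually deliver). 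Since by induction $\aa\indi\nn$ is the only letter right-dividing $\ff\nn(\brbr_\kk)$, the letter $\xx$ does not right-divide it, so Proposition~\ref{P:Div} gives $D(\ff\nn(\uu)\,\aa\indi\nn\,\xx\inv)\neq\varepsilon$, hence $D(\ff\nn(\uu)\,\aa\indi\nn\,\vv\,\yy\inv)\neq\varepsilon$ and $\yy$ does not right-divide $\brbr_\kko$. Only one negative letter ever crosses the boundary, and no reversing inside the prefix is needed. If you recast your crux step this way --- quantifying over all letters $\yy\neq\last{\br}_\kko$ and disposing of the prefix via Proposition~\ref{P:Div} and the uniqueness form of the induction hypothesis --- your proof becomes the paper's; as written, the decisive step is missing. (The rest --- the treatment of $\br_2=1$ and the identification of the last letters --- is fine.)
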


\begin{proof}
 Let $(\br_\brdi,...,\br_1)$ be the $\ff\nn$-splitting of some braid of $\BKL\nno$. 
 Condition~$(i)$ is a consequence of  Lemma~\ref{L:LastLetter}.$(ii)$. Condition~\eqref{E2:P:Split}  implies that the $\BKL\nno$-tail of
 $$
 \ff\nn^{\brdi-\kk}(\br_\brdi)\cdot...\cdot\ff\nn(\br_\kkp)
 $$
 is trivial for $\kk\geq1$. In particular the $\BKL\nno$-tail of $\ff\nn(\br_\kkp)$ must be trivial for $\kk\geq 1$, which implies $(ii)$. Condition $(iii)$ is Lemma~\ref{L:Barrier2}.
 
 Conversely, let us prove that a sequence $(\br_\brdi,...,\br_1)$ of braids of $\BKL\nno$ satisfying $(i),(ii)$ and $(iii)$ is the $\ff\nn$-splitting of some braid of $\BKL\nn$. 
 Condition~$(i)$ implies that $\br_\brdi$ is not trivial. 
 For $\kk\geq 2$ we denote by $\brbr_\kk$ the braid $\ff\nn^{\brdi-\kk}(\br_\brdi)\cdot...\cdot\ff\nn(\br_\kkp)\cdot\br_\kk$.
 For $\kk\geq 3$ and $\kk\geq2$ whenever $\br_2\not=1$, we first prove
 \begin{equation}
 \label{E:ToProve}
  \text{$\last{\br}_\kk$ is the only $\SPBKL\nn$-letter that right divides $\brbr_\kk$.}
 \end{equation}
 We note that Condition $(i)$ guarantees the existence of $\last{\br}_\kk$ for $\kk\geq 3$.
  For $\kk=\brdi$, Condition $(ii)$ implies that the $\BKL\nno$-tail of $\ff\nn(\br_\brdi)$ is trivial. 
  Hence, by
  Lemma~\ref{L:UniqueLastLetter} the only $\SPBKL\nno$-letter that right divides $\br_\brdi$ is $\last{\br}_\brdi$. 
 Since any right divisors of a braid of~$\BKL\nno$ lie in $\BKL\nno$, we have established \eqref{E:ToProve} for $\kk=\brdi$.
 Assume \eqref{E:ToProve} holds for $\kk\geq4$ or $\kk\geq3$ whenever $\br_2\not=1$ and let us prove it for $\kko$. 
 By Condition $(ii)$ there exists $\indi$ such that $\last\br_\kk$ is~$\aa\indio\nno$.
 We denote by $\uu\aa\indio\nno$ and $\vv$ two $\SPBKL\nn$-words representing $\brbr_\kk$ and $\br_\kko$ respectively.
 The braid $\brbr_\kko$ is then represented by $\ff\nn(\uu)\aa\indi\nn\vv$. 
 Let $\yy$ be an $\SPBKL\nn$-letter different from $\last\br_\kko$. 
 Proposition~\ref{P:Equiv} with Condition~$(iii)$ and Proposition~\ref{P:Equiv2} imply that $\yy$ is not a right divisor of $\aa\indi\nn\,\br_\kko$. 
 Therefore, by Proposition~\ref{P:Div} there exists  an $\SPBKL\nn$-letter $\xx$ different from $\aa\indi\nn$ such that $\ff\nn(\uu)\aa\indi\nn\vv\yy\inv\rev\ff\nn(\uu)\aa\indi\nn\xx\inv\cdots$. 
 The word $\ff\nn(\uu)\aa\indi\nn$ represents $\ff\nn(\brbr_\kk)$. 
 By induction hypothesis $\xx$ is not a right divisor of $\ff\nn(\brbr_\kk)$. 
 Then Proposition~\ref{P:Div} implies that $D(\ff\nn(\uu)\aa\indi\nn\xx\inv)$ is not empty. 
 It follows $D(\ff\nn(\uu)\aa\indi\nn\vv\yy\inv)\not=\varepsilon$ and so always by Proposition~\ref{P:Div}, the letter $\yy$ is not a right divisor of $\brbr_\kko$. 
 Eventually we have established~\eqref{E:ToProve} for $\kk\geq3$.

A direct consequence of \eqref{E:ToProve} and Condition $(ii)$ is that the only $\SPBKL\nn$-letter right divising $\ff\nn(\brbr_\kk)$ is of type $\aa{..}\nn$ and so the $\BKL\nno$-tail of he braid~$\brbr_\kk$ is trivial for $\kk\geq 3$ and for $\kk=2$ whenever $\br_\kk\not=1$. 
It remains to establish that the $\BKL\nno$ tail of $\ff\nn(\brbr_2)$ is also trivial whenever $\br_2$ is trivial.
Assume $\br_2=1$. 
Condition $(iii)$ implies $\last\br_3=\aa\nnt\nno$.
By \eqref{E:ToProve}, $\aa\nnt\nno$ is the only $\SPBKL\nn$-letter that right divides the braid~$\brbr_3$. 
Since $\brbr_2=\ff\nn(\brbr_3)$, the letter~$\ff\nn^2(\aa\nnt\nno)=\aa1\nn$ is the only letter right divising $\ff\nn(\brbr_2)$. In particular the $\BKL\nno$-tail of~$\ff\nn(\brbr_2)$ is trivial.
\end{proof}

Conditions $(i)$, $(ii)$ and $(iii)$ are easy to check if the braids $\br_1,...,\br_\brdi$ are given by their rotating normal forms.

\begin{coro}
\label{C:Main}
Let $(\ww_\brdi,...,\ww_1)$ be a finite sequence of $\SPBKL\nno$-words, then the word 
\begin{equation}
\label{E:C:Main}
\ff\nn^\brdio(\ww_\brdi)\cdot...\cdot\ff\nn(\ww_2)\cdot\ww_1,
\end{equation}
is $\nn$-rotating if the following conditions are satisfied

$(i)$ for $\kk\geq1$, the word $\ww_\kk$ is $(\nno)$-rotating,

$(ii)$ for $\kk\geq3$, the word $\ww_\kk$ ends by $\aa\indio\nno$ for some $\indi$,

$(iii)$ the word $\ww_2$ is either empty (except for $\brdi=2)$ or ends by $\aa\indio\nno$ for some~$\indi$,

$(iv)$ if, for $\kk\geq3$, the word $\ww_\kk$ ends by $\aa\indio\nno$ with $\indi\not=\nno$ then the word~$\ww_\kko$ contains an $\aa\indi\nn$-barrier.
\end{coro}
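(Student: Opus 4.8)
The plan is to deduce the statement from Theorem~\ref{T:Main} together with Definition~\ref{D:Rotating}. Put $\br_\kk=\overline{\ww_\kk}$ for $\kk\in[1,\brdi]$; since $\ww_\kk$ is $(\nno)$-rotating by $(i)$, we have $\ww_\kk=\rnf_\nno(\br_\kk)$, so that whenever $\ww_\kk\neq\varepsilon$ its last letter is $\last{\br}_\kk$. If $\brdi=1$, then \eqref{E:C:Main} is just $\ww_1$, which being $(\nno)$-rotating is the rotating normal form of a braid of $\BKL\nno$, hence, as noted after Definition~\ref{D:Rotating}, also its $\nn$-rotating normal form; thus \eqref{E:C:Main} is $\nn$-rotating, and we may assume $\brdi\geq2$. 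It then suffices to verify that $(\br_\brdi,\ldots,\br_1)$ satisfies conditions $(i)$, $(ii)$, $(iii)$ of Theorem~\ref{T:Main}: that theorem supplies a braid $\br\in\BKL\nn$ whose $\ff\nn$-splitting is $(\br_\brdi,\ldots,\br_1)$, and Definition~\ref{D:Rotating} gives $\rnf_\nn(\br)=\ff\nn^\brdio(\rnf_\nno(\br_\brdi))\cdots\rnf_\nno(\br_1)$, which, on substituting $\rnf_\nno(\br_\kk)=\ww_\kk$, is exactly the word \eqref{E:C:Main}.

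Conditions $(i)$ and $(iii)$ of Theorem~\ref{T:Main} are immediate from the hypotheses of the corollary. For $(i)$: if $\kk\geq3$ then $\ww_\kk\neq\varepsilon$ by $(ii)$, so $\br_\kk\neq1$; and for $\kk=\brdi$, either $\brdi\geq3$ (already covered) or $\brdi=2$, in which case the exclusion ``except for $\brdi=2$'' in condition $(iii)$ forbids $\ww_2=\varepsilon$, so again $\br_2\neq1$. For $(iii)$ of Theorem~\ref{T:Main}: let $\kk\geq3$ with $\last{\br}_\kk\neq\aa\nnt\nno$; by $(ii)$ the word $\ww_\kk$ ends with some $\aa\indio\nno$, and $\last{\br}_\kk=\aa\indio\nno\neq\aa\nnt\nno$ forces $\indi\neq\nno$. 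Condition $(iv)$ then yields that $\ww_\kko$ contains an $\aa\indi\nn$-barrier, and since containing an $\aa\indi\nn$-barrier is a property of the represented braid rather than of the word (Section~4), $\br_\kko$ contains an $\aa\indi\nn$-barrier. As $\ff\nn(\last{\br}_\kk)=\ff\nn(\aa\indio\nno)=\aa\indi\nn$ by \eqref{E:Phi}, this is precisely condition $(iii)$ of Theorem~\ref{T:Main}.

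The real work is condition $(ii)$ of Theorem~\ref{T:Main}: for $\kk\geq2$, the $\BKL\nno$-tail of $\ff\nn(\br_\kk)$ is trivial. If $\ww_\kk=\varepsilon$ (possible only when $\kk=2$, by $(iii)$), then $\ff\nn(\br_\kk)=1$ and there is nothing to prove. Otherwise $\ww_\kk$ ends with a letter $\aa\indio\nno$, whose second index is $\nno$, and the key claim is that $\aa\indio\nno$ is then the only $\SPBKL\nno$-letter right-dividing $\br_\kk$. When $\nn=3$ this holds trivially, since $\SPBKL2$ has a single element which must right-divide the non-trivial braid $\br_\kk$; so assume $\nn\geq4$. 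The last entry of the $\ff\nno$-splitting of $\br_\kk$ is the $\BKL\nnt$-tail of $\br_\kk$, and it must be trivial: were it non-trivial, Definition~\ref{D:Rotating} applied inside $\BKL\nno$ would make the last letter of $\rnf_\nno(\br_\kk)$ lie in $\SPBKL\nnt$, hence have second index at most $\nnt$, contradicting the shape $\aa\indio\nno$; so Lemma~\ref{L:UniqueLastLetter}, applied in $\BKL\nno$ to the non-trivial braid $\br_\kk$, gives the claim. Since every right divisor in $\BKL\nn$ of a braid of $\BKL\nno$ already lies in $\BKL\nno$ (a fact used in the proof of Theorem~\ref{T:Main}), $\aa\indio\nno$ is also the only $\SPBKL\nn$-letter right-dividing $\br_\kk$; and since $\ff\nn$ is a monoid automorphism permuting $\SPBKL\nn$, the braid $\ff\nn(\br_\kk)$ is right-divisible by exactly one $\SPBKL\nn$-letter, namely $\ff\nn(\aa\indio\nno)=\aa\indi\nn$, which is of type $\aa{..}\nn$. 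Lemma~\ref{L:UniqueLastLetter}, applied now in $\BKL\nn$, then shows that the $\BKL\nno$-tail of $\ff\nn(\br_\kk)$ is trivial. I expect this transfer to be the only delicate point — the passage from ``the last letter of the $(\nno)$-rotating form has second index $\nno$'' to ``trivial $\BKL\nnt$-tail'', then to ``unique right-dividing $\SPBKL\nno$-letter'', then across the automorphism $\ff\nn$ via Lemma~\ref{L:UniqueLastLetter} — while conditions $(i)$ and $(iii)$ and the concluding appeal to Theorem~\ref{T:Main} and Definition~\ref{D:Rotating} are routine bookkeeping.
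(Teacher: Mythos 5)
Your proof is correct and follows essentially the same route as the paper's: reduce to Theorem~\ref{T:Main} via Definition~\ref{D:Rotating}, dispose of conditions $(i)$ and $(iii)$ of that theorem directly from the hypotheses, and derive condition $(ii)$ from the observation that a last letter of type $\aa{..}\nno$ forces the $\BKL\nnt$-tail of $\br_\kk$ to be trivial. The paper packages that last step as a short contrapositive divisibility argument, whereas you pass twice through Lemma~\ref{L:UniqueLastLetter} and transport along the automorphism $\ff\nn$, but the underlying content is identical.
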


\begin{proof}
Assume that $(\ww_\brdi,...,\ww_1)$ satisfies Conditions $(i)$-$(iv)$ and let us prove that the word $\ww$ defined at \eqref{E:C:Main} is rotating.

We denote by $\br_\ii$ (resp. $\br$) the braid represented by $\ww_\ii$ (resp. $\ww$). By Condition $(i)$ and  Definition~\ref{D:Rotating}, the word $\ww$ is rotating if and only if $(\br_\brdi,...,\br_1)$ is a $\ff\nn$-splitting. Conditions $(ii)$ and $(iii)$ imply Condition $(i)$ of Theorem~\ref{T:Main}. Theorem~\ref{T:Main}.$(iii)$ is a consequence of $(ii)$ and $(iv)$. We remark that the $\BKL\nno$-tail of a braid $\brbr$ is represented by a suffix of the rotating word of $\brbr$. 
If the $\BKL\nno$-tail of $\ff\nn(\br_\kk)$ is not trivial, then there exists $\aa\indi\indii$ with $1\leq\indi<\indii<\nn$ that right divides $\ff\nn(\br_\kk)$. 
As $\br_\kk$ lies in $\BKL\nno$, we have $\indi\not=1$ and therefore $\br_\kk$ is right divisible by $\aa\indio\indiio$ with $\indiio\leq\nnt$. Assume the $\BKL\nno$-tail of $\ww_\kk$ is not trivial for $\kk\geq 2$. The previous remark implies that $\ww_\kk$ must end with a letter $\aa\ii\jj$ satisfying $\jj\leq\nnt$, which is in contradiction with Conditions $(ii)$ and $(iii)$.
\end{proof}

It is not true that any decomposition of an $\nn$-rotating word as in~\eqref{E:C:Main} satisfies Conditions $(i)-(iv)$ of Corollary~\ref{E:C:Main}. However we have the following result.

\begin{prop}
\label{P:Main}
 For every $\nn$-rotating word $\ww$ with $\nn\geq 3$ there exists a unique sequence $(\ww_\brdi,...,\ww_1)$ of $(\nno)$-rotating words such that $\ww$ decompose as in \eqref{E:C:Main} and Conditions $(ii)-(iv)$ of \ref{C:Main} hold.
\end{prop}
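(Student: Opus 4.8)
The plan is to show that the sequence is forced: it must be $(\rnf_\nno(\br_\brdi),\ldots,\rnf_\nno(\br_1))$, where $(\br_\brdi,\ldots,\br_1)$ is the $\ff\nn$-splitting of $\br:=\overline\ww$, and that this sequence does satisfy the requirements. One may assume $\br\ne1$, the case $\ww=\varepsilon$ being handled by $\brdi=1$, $\ww_1=\varepsilon$. For existence: since $\ww$ is $\nn$-rotating we have $\ww=\rnf_\nn(\br)$; let $(\br_\brdi,\ldots,\br_1)$ be the $\ff\nn$-splitting of $\br$ furnished by Proposition~\ref{P:Splitting}, and set $\ww_\kk=\rnf_\nno(\br_\kk)$ for $1\le\kk\le\brdi$. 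Each $\ww_\kk$ is $(\nno)$-rotating, and Definition~\ref{D:Rotating} gives precisely $\ww=\ff\nn^\brdio(\ww_\brdi)\cdot\ldots\cdot\ff\nn(\ww_2)\cdot\ww_1$, the decomposition~\eqref{E:C:Main}. It then remains to check Conditions~$(ii)$--$(iv)$ of Corollary~\ref{C:Main} for this sequence, and to prove uniqueness.

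Conditions~$(ii)$ and~$(iii)$ are read off from Lemma~\ref{L:LastLetter}, using that $\last{\br}_\kk$ is by definition the last letter of $\rnf_\nno(\br_\kk)=\ww_\kk$. For $\kk\ge3$: Lemma~\ref{L:LastLetter}$(ii)$ gives $\br_\kk\ne1$, and Lemma~\ref{L:LastLetter}$(i)$ then forces $\last{\br}_\kk$ to be of type $\aa{..}\nno$; this is Condition~$(ii)$. For $\kk=2$ with $\brdi\ge3$: either $\br_2=1$, so $\ww_2=\varepsilon$, or $\br_2\ne1$ and Lemma~\ref{L:LastLetter}$(i)$ applies once more. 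When $\brdi=2$: Lemma~\ref{L:LastLetter}$(ii)$ gives $\br_2=\br_\brdi\ne1$ (so $\ww_2\ne\varepsilon$, matching the parenthetical exception in~$(iii)$) while Lemma~\ref{L:LastLetter}$(i)$ at $\kk=\brdi$ gives the last letter of $\ww_2$. For Condition~$(iv)$: if $\kk\ge3$ and $\ww_\kk$ ends with $\aa\indio\nno$ with $\indi\ne\nno$, then $\last{\br}_\kk=\aa\indio\nno\ne\aa\nnt\nno$, so Lemma~\ref{L:Barrier2} gives that $\rnf_\nno(\br_\kko)=\ww_\kko$ contains a $\ff\nn(\aa\indio\nno)$-barrier, which via $\ff\nn(\aa\indio\nno)=\aa\indi\nn$ is an $\aa\indi\nn$-barrier. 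This settles existence.

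For uniqueness, let $(\ww_\brdi,\ldots,\ww_1)$ be any sequence of $(\nno)$-rotating words realizing~\eqref{E:C:Main} and satisfying~$(ii)$--$(iv)$. Being $(\nno)$-rotating, the $\ww_\kk$ also satisfy Condition~$(i)$ of Corollary~\ref{C:Main}, so Conditions $(i)$--$(iv)$ all hold. The proof of Corollary~\ref{C:Main} shows, under these conditions, that the braid sequence $(\overline{\ww_\brdi},\ldots,\overline{\ww_1})$ fulfils the three hypotheses of Theorem~\ref{T:Main}; hence, by Theorem~\ref{T:Main}, it is the $\ff\nn$-splitting of the braid it encodes, namely $\ff\nn^\brdio(\overline{\ww_\brdi})\cdot\ldots\cdot\overline{\ww_1}=\overline\ww=\br$. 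By the uniqueness clause of Proposition~\ref{P:Splitting}, the length $\brdi$ and every braid $\overline{\ww_\kk}$ are then determined by $\br$, so $\overline{\ww_\kk}=\br_\kk$ with $(\br_\brdi,\ldots,\br_1)$ the $\ff\nn$-splitting of $\br$. Finally, each $\ww_\kk$ is an $(\nno)$-rotating word representing $\br_\kk$, and as the $(\nno)$-rotating normal form is injective with $\rnf_\nno(\brbr)$ representing $\brbr$ for every $\brbr\in\BKL\nno$, we get $\ww_\kk=\rnf_\nno(\br_\kk)$. The sequence is therefore unique and equals the one constructed above.

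I expect the crux to be the uniqueness step, where one has to extract from the proof of Corollary~\ref{C:Main} the stronger assertion that $(\overline{\ww_\brdi},\ldots,\overline{\ww_1})$ is precisely the $\ff\nn$-splitting of $\overline\ww$ --- not merely that $\ww$ is $\nn$-rotating, which by itself does not pin down where the concatenation~\eqref{E:C:Main} is cut. Once this is in hand, the uniqueness of the $\ff\nn$-splitting (Proposition~\ref{P:Splitting}) and injectivity of $\rnf_\nno$ finish the job. A minor further nuisance is the index bookkeeping in the boundary case $\brdi=2$ of Condition~$(iii)$ and in the identity $\ff\nn(\aa\indio\nno)=\aa\indi\nn$ used for Condition~$(iv)$.
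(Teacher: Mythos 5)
Your proof is correct, but the uniqueness argument follows a genuinely different route from the paper's. The paper proves uniqueness by a purely syntactic comparison: given two sequences realizing \eqref{E:C:Main} and satisfying $(ii)$--$(iv)$ of Corollary~\ref{C:Main}, it takes the minimal index $\kk$ where they differ, observes that one block is then a proper suffix of the other, and derives a contradiction because the surplus prefix would have to end with $\ff\nn(\aa\indio\nno)=\aa\indi\nn$ or $\ff\nn^2(\aa\nnt\nno)=\aa1\nn$, letters which cannot occur in an $\SPBKL\nno$-word. You instead lift everything to the braid level: you extract from the \emph{proof} (not the statement) of Corollary~\ref{C:Main} that conditions $(i)$--$(iv)$ force the braid sequence $(\overline{\ww_\brdi},\ldots,\overline{\ww_1})$ to satisfy the hypotheses of Theorem~\ref{T:Main}, hence to be the $\ff\nn$-splitting of $\overline\ww$, and then conclude via the uniqueness clause of Proposition~\ref{P:Splitting} together with the fact that each braid has exactly one $(\nno)$-rotating representative. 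You correctly identify the one delicate point of this route: the statement of Corollary~\ref{C:Main} alone only yields that $\ww$ is rotating, which does not pin down the cut points of the concatenation, so citing the intermediate assertion inside its proof is genuinely necessary. What each approach buys: the paper's argument is self-contained at the word level and never re-enters earlier proofs, while yours is conceptually cleaner (all uniqueness is delegated to the uniqueness of the $\ff\nn$-splitting) at the cost of relying on an unstated strengthening of Corollary~\ref{C:Main}. Your existence argument coincides with the paper's one-line justification, usefully made explicit (including the appeal to Lemma~\ref{L:Barrier2} for condition $(iv)$, which the paper's citation of Lemma~\ref{L:LastLetter} alone does not cover).
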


\begin{proof}
By definition of a rotating normal word and by Lemma~\ref{L:LastLetter} such a sequence exists. 
Let us prove now the unicity. 
Assume $\ww$ is a $\nn$-rotating normal word and that $(\ww_\brdi,...,\ww_1)$ and $(\www_\brdii,...,\www_1)$ are two different sequences of ($\nno$)-rotating normal words satisfying Conditions $(ii)$ and $(iii)$ of Corollary~\ref{C:Main}. 
Let $\kk$ be the minimal integer satisfying $\ww_\kk\not=\ww'_\kk$. 
Since the sum of the word lengths of the two sequences are the same, we have $\kk\leq\min\{\brdi,\brdii\}$. 
Without lost of generality, we may assume that $\www_\kk$ is a proper suffix of $\ww_\kk$, \ie,  $\ww_\kk=\uu\cdot\www_\kk$.
By Conditions $(ii)$ and $(iii)$ of Corollary~\ref{C:Main}, the last letter~$\xx$ of $\uu$ comes from the last letter of $\www_\kkp$ or $\www_\kkpp$. Hence the letter $\xx$ is equal to $\aa\indio\nno$ for some $\indi$ and $\ww_\kk$ admits either $\ff\nn(\aa\indio\nno)\www_\kk=\aa\indi\nn\www_\kk$ or $\ff\nn^2(\aa\indio\nno)\www_\kk=\aa1\indip\www_\kk$ as suffix. The first case is impossible since $\ww_\kk$ is an $\SPBKL\nno$-word. The second case may occur only for $\kk=1$ and $\www_2=\varepsilon$.
As $\www_2$ is empty, the last letter of $\www_3$, which is $\xx$, is equal to $\aa\nnt\nno$.
This implies that $\ww_\kk$ admits $\aa1\nn\uu$ as suffix which is also impossible since it is an $\SPBKL\nno$-word.
\end{proof}

A direct consequence of Corollary~\ref{C:Main} and Proposition~\ref{P:Main} is

\begin{thrm}
\label{T:Main2}
 An $\SPBKL\nn$-word $\ww$ is rotating if and only if it can be expressed as in~\eqref{E:C:Main} subject to Conditions $(i)-(iv)$ of Corollary~\ref{C:Main}.
\end{thrm}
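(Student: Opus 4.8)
The plan is to derive Theorem~\ref{T:Main2} as a purely formal consequence of the two preceding results, Corollary~\ref{C:Main} and Proposition~\ref{P:Main}, with no new braid-theoretic input required. The statement has two directions, and both are already essentially packaged.

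For the ``only if'' direction, suppose $\ww$ is an $\nn$-rotating $\SPBKL\nn$-word. I would invoke Proposition~\ref{P:Main} directly: it asserts the existence of a sequence $(\ww_\brdi,...,\ww_1)$ of $(\nno)$-rotating words realizing the decomposition~\eqref{E:C:Main} and satisfying Conditions $(ii)$, $(iii)$, $(iv)$ of Corollary~\ref{C:Main}. The only thing not literally stated in Proposition~\ref{P:Main} is Condition $(i)$ of Corollary~\ref{C:Main}, namely that each $\ww_\kk$ is $(\nno)$-rotating --- but this is exactly the defining property of the sequence produced by Proposition~\ref{P:Main}, so Condition $(i)$ holds as well. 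Hence $\ww$ admits an expression of the form~\eqref{E:C:Main} subject to all of Conditions $(i)$--$(iv)$.

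For the ``if'' direction, suppose $\ww$ can be written as in~\eqref{E:C:Main} with some sequence $(\ww_\brdi,...,\ww_1)$ of $\SPBKL\nno$-words satisfying Conditions $(i)$--$(iv)$ of Corollary~\ref{C:Main}. Then Corollary~\ref{C:Main} applies verbatim and yields that $\ww$ is $\nn$-rotating. Combining the two directions gives the claimed equivalence.

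I do not expect any real obstacle here: the theorem is a two-line corollary whose entire content has been front-loaded into Corollary~\ref{C:Main} (sufficiency of Conditions $(i)$--$(iv)$) and Proposition~\ref{P:Main} (necessity, via existence of an adapted decomposition). The only point requiring a word of care is making explicit that the sequence furnished by Proposition~\ref{P:Main} consists of $(\nno)$-rotating words, so that Condition $(i)$ of Corollary~\ref{C:Main} --- which Proposition~\ref{P:Main} does not name explicitly but does guarantee --- is in force; once that is noted, the proof is complete.
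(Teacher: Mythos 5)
Your proposal is correct and matches the paper exactly: the paper presents Theorem~\ref{T:Main2} as ``a direct consequence of Corollary~\ref{C:Main} and Proposition~\ref{P:Main},'' which is precisely your two-direction argument, and your remark that Condition $(i)$ is built into the statement of Proposition~\ref{P:Main} (the sequence it produces consists of $(\nno)$-rotating words) is the right observation to make the deduction airtight.
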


\section{Regularity}

In this section we will show that the language of $\nn$-rotating words, denoted by~$R_\nn$ is regular, \ie, there exists a finite state automaton recognizing the $\nn$-rotating words. 
As the rotating normal form is defined using right division it is more natural for an automaton to read word from the right. For $\ww=\xx_0\cdot...\cdot\xx_\kk$ an $\SPBKL\nn$-word we will denote by $\Pi(\ww)$ the word $\xx_\kk\cdot...\cdot\xx_0$. By Theorem~1.2.8 of~\cite{Epstein1992} the language $R_\nn$ is regular if and only if the language~$\Pi(R_\nn)$ is. In this section we will construct an automaton recognizing $\Pi(R_\nn)$.

For us a \emph{finite state automaton} is a quintuplet $(S\cup\{\otimes\},A,\mu,Y,i)$ where~$S$ is the finite set of \emph{states}, $A$ is a finite \emph{alphabet}, $\mu:S\times A\to S$ is the \emph{transition function}, $Y\subseteq S$ is \emph{acceptating states} and $i$ is the \emph{initial state}. In this paper each automaton is equipped with an undraw dead state $\otimes$ and all states except the dead one is accepting, \ie, $Y=S$ always holds. Therefore an automaton will be briefly denoted by $\mathcal{A}=(S,A,\mu,i)$. To describe $\mathcal{A}$ it is enough to describe $\mu$ on $(s,x)\in S\times A$ where $\mu(s,x)\not=\otimes$ and $s\not=\otimes$. By example an automaton recognizing the language $R_2$ is $\mathcal{A}_2=(\{1\},\{\aa12\},\mu_2,1)$ with $\mu_2(1,\aa12)=1$.
The corresponding automaton diagram is :

\begin{center}
\begin{tikzpicture}[x=0.05cm,y=0.05cm]
\sstate{A}{(0,0)}{1}
\myloop{A}{\aaa12}{90}{-6}
\initstate{A}{180}
\end{tikzpicture}
\end{center}
The horizontal arrow points to the initial state.

\begin{prop}
 An $\SPBKL3$-word $x_b^{e_b}\cdot...\cdot\aa13^{e_3}\,\aa23^{e_2}\,\aa12^{e_1}$ where 
 $$
 x_b=\begin{cases}
      \aa12 & \text{if $b\equiv 1\mod 3$,}\\
      \aa23 & \text{if $b\equiv 2\mod 3$,}\\
      \aa13 & \text{if $b\equiv 3\mod 3$.}
     \end{cases}
 $$
 is rotating  if and only if $e_\kk\not=0$ for all $\kk\geq3$.
\end{prop}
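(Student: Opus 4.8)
The statement characterizes which words of the very special form
$x_b^{e_b}\cdots\aa13^{e_3}\aa23^{e_2}\aa12^{e_1}$ are $3$-rotating. My plan is to apply Theorem~\ref{T:Main2} (equivalently Corollary~\ref{C:Main} together with Proposition~\ref{P:Main}): such a word is rotating if and only if it decomposes as in~\eqref{E:C:Main} subject to Conditions $(i)$–$(iv)$ of Corollary~\ref{C:Main}. The key observation is that for $n=3$ we have $\nno=2$, so every $(\nno)$-rotating word is simply a power $\aa12^{e}$, and the Garside map satisfies $\ff3(\aa12)=\aa23$, $\ff3^2(\aa12)=\aa13$, $\ff3^3(\aa12)=\aa12$. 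Hence $\ff3^{\kk-1}(\aa12^{e_\kk})$ is exactly $x_\kk^{e_\kk}$ with $x_\kk$ as in the statement, and the given word is precisely the candidate decomposition \eqref{E:C:Main} with $\ww_\kk=\aa12^{e_\kk}$.

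First I would check the "only if" direction. If the word is $3$-rotating, then by Proposition~\ref{P:Main} it has a decomposition as in~\eqref{E:C:Main} with $(\nno)$-rotating pieces satisfying $(ii)$–$(iv)$ of Corollary~\ref{C:Main}; since each piece is a power of $\aa12$ the decomposition is forced to be the one exhibited, so $b$ and the $e_\kk$ are determined. Condition $(i)$ of Theorem~\ref{T:Main} (via Lemma~\ref{L:LastLetter}$(ii)$) then gives $\br_\kk\neq 1$, i.e. $e_\kk\neq 0$, for $\kk\geq 3$ and $\kk=b$; for $n=3$ these two ranges together are exactly $\kk\geq 3$. So $e_\kk\neq 0$ for all $\kk\geq 3$, as claimed.

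Next the "if" direction: assume $e_\kk\neq 0$ for all $\kk\geq 3$, and verify Conditions $(i)$–$(iv)$ of Corollary~\ref{C:Main} for the sequence $(\ww_b,\ldots,\ww_1)=(\aa12^{e_b},\ldots,\aa12^{e_1})$. Condition $(i)$ holds since each $\aa12^{e_\kk}$ is $2$-rotating. Condition $(ii)$: for $\kk\geq 3$, $e_\kk\neq 0$ forces $\ww_\kk=\aa12^{e_\kk}$ to be nonempty and to end in $\aa12=\aa{1}{2}$, which is of the required form $\aa{p-1}{2}$ with $p=2$ (i.e. $\nno=2$). Condition $(iii)$ about $\ww_2$: here I must observe that for $n=3$ there is no constraint to check beyond "empty or ending in $\aa12$", and $\aa12^{e_2}$ always meets this (the empty/nonempty dichotomy is allowed, and $b=2$ is the exceptional case where $\ww_2$ may be nonempty). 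Condition $(iv)$: when $\ww_\kk$ ends in $\aa{p-1}{2}$ with $p\neq\nno=2$; but $p$ here is forced to equal $2$, so the hypothesis of $(iv)$ is never met and the condition is vacuous. (This matches the earlier remark in the paper that there is no $\aa\indi\nn$-barrier for $\nn\leq 3$.) Hence Corollary~\ref{C:Main} applies and the word is $3$-rotating.

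**Expected main obstacle.** The content is essentially bookkeeping, so the only real care is in matching the index conventions: checking that for $n=3$ the union of the ranges "$\kk\geq 3$" and "$\kk=b$" in Lemma~\ref{L:LastLetter}$(ii)$ / Theorem~\ref{T:Main}$(i)$ collapses to just "$\kk\geq 3$", that "$\aa12$ ends with a letter of type $\aa{..}{\nno}=\aa{..}{2}$" is correctly read with $p=2$, and—most importantly—that Condition $(iv)$ is genuinely vacuous because $p=\nno$ always holds here. One should also note why $\www_2=\varepsilon$ in Proposition~\ref{P:Main}'s uniqueness poses no problem: for $n=3$, $\aa1\indip=\aa13$ never arises as a suffix of an $\SPBKL2$-word either, so the decomposition really is unique and forced. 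After these conventions are pinned down, the proof is a direct application of Theorem~\ref{T:Main2}.
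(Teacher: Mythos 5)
Your proposal is correct and follows essentially the same route as the paper: the paper likewise sets $\ww_\kk=\aa12^{e_\kk}$, identifies the given word with the decomposition~\eqref{E:C:Main}, observes that Condition $(iv)$ of Corollary~\ref{C:Main} is vacuous for $n=3$, and reduces the statement to Conditions $(ii)$ and $(iii)$, i.e.\ to $e_\kk\neq0$ for $\kk\geq3$. Your treatment is if anything slightly more careful, since you invoke Proposition~\ref{P:Main} explicitly to justify the \emph{only if} direction, which the paper leaves implicit.
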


\begin{proof}
 The $2$-rotating words are powers of $\aa12$. Let $\ww$ be the word of the statement. Defining $\ww_\kk$ to be $\aa12^{e_k}$, we obtain
 $$
 \ww=\ff\nn^\brdio(\ww_\brdi)\cdot...\cdot\ff\nn(\ww_2)\cdot\ww_1.
 $$
 As there is no barrier in $\BKL3$, the word $\ww$ is rotating if and only if it satisfies Conditions $(ii)$ and $(iii)$ of Corollary~\ref{C:Main}, \ie, the exponent $e_\kk$ is not $0$ for $\kk\geq 3$.
\end{proof}

As a consequence the following automaton recognizes the language $\Pi(R_3)$:

\begin{center}
\begin{tikzpicture}[x=0.05cm,y=0.05cm,every path/.style={->,>=latex,line width=0.8,label distance=0cm}]
\sstate{A1}{(240:20)}{1};
\sstate{A2}{(120:20)}{2};
\sstate{A0}{(0:20)}{3};
\sstate{I}{(-40,0)}{0};
\initstate{I}{220};
\myloop{I}{\aaa12}{120}{-9};
\myloop{A1}{\aaa23}{240}{-8};
\myloop{A2}{\aaa13}{120}{-9};
\myloop{A0}{\aaa12}{0}{-6};
\path (I) edge node[above left=-3pt] {$\scriptstyle\aaa13$} (A2);
\path (I) edge node[below left=-3pt] {$\scriptstyle\aaa23$} (A1);
\path (A1) edge [bend left] node[left=-2pt] {$\scriptstyle\aaa13$} (A2);
\path (A2) edge [bend left] node[above right=-3pt] {$\scriptstyle\aaa12$}(A0);
\path (A0) edge [bend left] node[below right=-3pt] {$\scriptstyle\aaa23$}(A1);
\end{tikzpicture}
\end{center}

Unfortunately, for $\nn\geq4$ there is no so simple characterization of $\nn$-rotating words. We will describe an inductive construction for an automaton recognizing language $\Pi\left(R_n\right)$. The process will be illustrated on $\nn=4$. The first step is to focus on $\nn$-rotating words ending with a letter of type $\aa{..}\nn$.

\begin{defi}
 We denote by $R_\nn^\ast$ the language of $\nn$-rotating words which are empty or ends with a letter of type $\aa\indi\nn$ for some $\indi$.
\end{defi}

Before constructing an automaton $\mathcal{A}_\nn$ recognizing the language $\Pi\left(R_\nn\right)$, we construct by induction on $n\geq 3$  an automaton $\mathcal{A}_\nn^\ast$ for the language $\Pi\left(R_\nn^\ast\right)$.

\begin{defi}
 A \emph{partial automaton} is a quadruplet $P=(S,A,\mu,I)$ where~$S$, $A$ and $\mu$ are defined as for an automaton and $I:A\to S$ is a map. The closure of a partial automaton $P$ is the automaton $\mathcal{A}(P)=(S\cup\{\circ\},A,\mu^c,\circ)$ given by
 $$
 \mu^c(s,x)= \begin{cases}
 I(x) & \text{if $s=\circ$,}\\
\mu(s,x) & \text{otherwise.}
 \end{cases}
$$
\end{defi}

A partial automaton is represented as an automaton excepted for the function $I$. 
For each $x\in A$ we draw an arrow attached to state $I(x)$ and labelled~$x$.
We say that a partial automaton recognizes a given language if its closure does.

\begin{figure}[h!]

\begin{center}
\begin{tikzpicture}[x=0.05cm,y=0.05cm,every path/.style={->,>=latex,line width=0.8,label distance=0cm}]
 \sstate{A1}{(240:20)}{1};
\sstate{A2}{(120:20)}{2};
\sstate{A0}{(0:20)}{3};
\myloop{A1}{\aaa23}{240}{-8};
\myloop{A2}{\aaa13}{120}{-9};
\myloop{A0}{\aaa12}{0}{-6};
\initstatel{A1}{180}{\aaa23}
\initstatel{A2}{180}{\aaa13}
\path (A1) edge [bend left] node[left=-2pt] {$\scriptstyle\aaa13$} (A2);
\path (A2) edge [bend left] node[above right=-3pt] {$\scriptstyle\aaa12$}(A0);
\path (A0) edge [bend left] node[below right=-3pt] {$\scriptstyle\aaa23$}(A1);
\begin{scope}[shift={(120,0)}]

\sstate{A1}{(240:20)}{1};
\sstate{A2}{(120:20)}{2};
\sstate{A0}{(0:20)}{3};
\sstate{I}{(-40,0)}{\circ};
\initstate{I}{180};
\myloop{A1}{\aaa23}{240}{-8};
\myloop{A2}{\aaa13}{120}{-9};
\myloop{A0}{\aaa12}{0}{-6};
\path (I) edge node[above left=-3pt] {$\scriptstyle\aaa13$} (A2);
\path (I) edge node[below left=-3pt] {$\scriptstyle\aaa23$} (A1);
\path (A1) edge [bend left] node[left=-2pt] {$\scriptstyle\aaa13$} (A2);
\path (A2) edge [bend left] node[above right=-3pt] {$\scriptstyle\aaa12$}(A0);
\path (A0) edge [bend left] node[below right=-3pt] {$\scriptstyle\aaa23$}(A1);
\end{scope}
\end{tikzpicture}
\end{center}
\caption{The partial automaton $P_3$ and the corresponding closure  which recognizes the language $\Pi(R_3^\ast)$.}
\label{F:P3}
\end{figure}
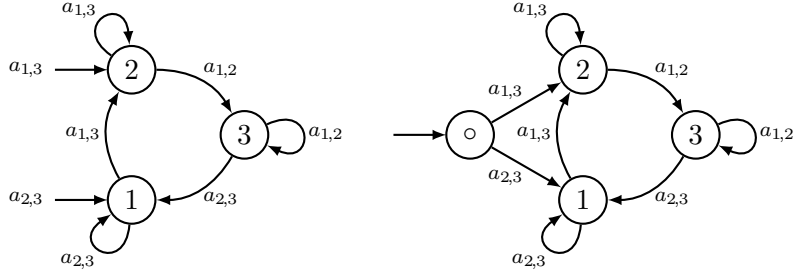

We will now show how to construct by induction a partial automaton $P_\nn$ recognizing $\Pi\left(R_\nn^\ast\right)$ for $\nn\geq 3$. For $\nn=3$ this is already done by Figure~\ref{F:P3}. For the sequel we assume $n\geq 4$ and that $P_\nno=(S_\nno,\SPBKL\nno,\mu_\nno,I_\nno)$ is a given partial automaton which recognizes the language $\Pi\left(R_\nno\right)$.

We define $S_\nn^0$ to be the set 
$$S_n^0=\{0\}\times (S_\nno\setminus\{\otimes\})\times\mathcal{P}(\{\aa2\nn,...,\aa\nnt\nn\}).$$ 
A state in~$S_n^0$ is then written $(0,s,m)$. For $x=\aa\ii\jj\in\SPBKL\nno$ we denote by $\bar\xx$ the set $\{\aa\indi\nn\,|\,\ii<\indi<\jj\}$.

 \begin{defi}
\label{D:A:Mem}
We define $P_n^0=(S_n^0\cup\{\otimes\},\SPBKL\nno,\mu_\nn^0,I_\nn^0)$ to be the partial automaton where for all $x\in \SPBKL\nno$, 
 $$
I_n^0(x)=\begin{cases}
       (0,I_\nno(x),\bar\xx)&\text{if $I_\nno(x)\not=\otimes$,}\\
       \otimes &\text{if $I_\nno(x)=\otimes$.}
      \end{cases}
$$
and for all $(0,s,m)\in S_\nn^0$ and for all $x \in \SPBKL\nno$, 
$$
 \mu_n^0((0,s,m),x)=\begin{cases}
              (0,\mu_\nno(s,x),m\cup\bar\xx)&\text{if $\mu_\nno(s,x)\not=\otimes$,}\\
	\otimes&\text{if $\mu_\nno(s,x)=\otimes$.}
           \end{cases}
 $$
\end{defi}
\begin{prop}
\label{P:A:Mem}
The partial automaton $P_\nn^0$ recognizes the language $\Pi\left(R_\nno^\ast\right)$. Moreover an accepted $\SPBKL\nn$-word $\Pi(\ww)$ contains an $\aa\indi\nn$-barrier if and only if $P_{\nn,0}$ has state~$(s,m)$ with $\aa\indi\nn\in m$ after reading $\Pi(\ww)$.
\end{prop}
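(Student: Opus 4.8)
The plan is to read off both assertions from one structural observation: up to the inert first coordinate $0$, the partial automaton $P_\nn^0$ is obtained from $P_\nno^\ast$ by taking the product of its state set with the power set $\mathcal{P}(\{\aa2\nn,\dots,\aa\nnt\nn\})$, where the new coordinate merely accumulates, via $\cup$, the sets $\bar\xx$ of the letters read and never causes a transition to fail.

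First I would prove the first assertion by a morphism argument. Let $\pi$ be the map sending a state $(0,s,m)\in S_\nn^0$ to $s\in S_\nno^\ast$ and $\otimes$ to $\otimes$. Comparing Definition~\ref{D:A:Mem} with the data of $P_\nno^\ast$, one checks at once that $\pi(I_\nn^0(x))=I_\nno^\ast(x)$, that $\pi(\mu_\nn^0(\sigma,x))=\mu_\nno^\ast(\pi(\sigma),x)$ for all $\sigma\in S_\nn^0$ and $x\in\SPBKL\nno$, and --- crucially --- that $I_\nn^0(x)=\otimes$ (resp.\ $\mu_\nn^0(\sigma,x)=\otimes$) precisely when $I_\nno^\ast(x)=\otimes$ (resp.\ $\mu_\nno^\ast(\pi(\sigma),x)=\otimes$). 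A one-line induction on the length of a word $\Pi(\ww)$ over $\SPBKL\nno$ then shows that the closure of $P_\nn^0$ lands in $\otimes$ after reading $\Pi(\ww)$ if and only if the closure of $P_\nno^\ast$ does. Since in both partial automata every non-dead state is accepting, $P_\nn^0$ and $P_\nno^\ast$ recognize the same language, which by the induction hypothesis of the construction is $\Pi(R_\nno^\ast)$.

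Next I would track the third coordinate. I claim that, whenever the closure of $P_\nn^0$ has not reached $\otimes$ after reading a non-empty word $\Pi(\ww)$, its state is $(0,s,m)$ with $m$ equal to the union of the sets $\bar\xx$ taken over all letters $\xx$ of $\ww$ (equivalently, of $\Pi(\ww)$, which has the same letters). This follows by induction on the length of the prefix: the first letter $x$ read puts the automaton in $I_\nn^0(x)=(0,I_\nno^\ast(x),\bar x)$, and every subsequent transition replaces the current set $m$ by $m\cup\bar x$; since $\cup$ is monotone nothing is ever removed, so after reading all of $\Pi(\ww)$ the third coordinate is exactly the announced union. Consequently $\aa\indi\nn\in m$ holds if and only if $\aa\indi\nn\in\bar{\aa\ii\jj}$ for some letter $\aa\ii\jj$ of $\ww$, that is, by the definition $\bar{\aa\ii\jj}=\{\aa\indi\nn:\ii<\indi<\jj\}$, if and only if some letter of $\ww$ is an $\aa\indi\nn$-barrier, i.e.\ if and only if $\Pi(\ww)$ contains an $\aa\indi\nn$-barrier. (For $\ww=\ew$ the closure is still in its initial state $\circ$, which is not of the form $(0,s,m)$; this matches the fact that the empty word contains no barrier, so the stated equivalence holds vacuously.)

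I do not expect any genuine difficulty here: the proposition is a bookkeeping statement about a product construction. The only points requiring care are the closure conventions --- the first letter of a word is routed through the initial map $I_\nn^0$, all later letters through $\mu_\nn^0$, and the ambient dead state $\otimes$ is undrawn --- and the explicit remark that, for a single letter, $\bar{\aa\ii\jj}$ records exactly which $\aa{..}\nn$-barriers that letter realizes, so that accumulating these sets with $\cup$ along the word records precisely the set of barriers occurring in the word.
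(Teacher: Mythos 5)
Your proposal is correct and follows essentially the same route as the paper: the paper also observes that the state of the closure of $P_\nn^0$ after $\kk$ letters is $\otimes$ exactly when that of $P_\nno^\ast$ is, and otherwise equals $(0,s_\kk,m_\kk)$, then shows by induction that $m_\ell=\bar{\ww_1}\cup\dots\cup\bar{\ww_\ell}$ and concludes from the definition of $\bar{\cdot}$. Your write-up is if anything slightly more careful (explicit two-way preservation of the dead state, and the empty-word case), but there is no substantive difference in method.
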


\begin{proof}
 Let $\ww$ be an $\SPBKL\nn$-word of length $\ell$, $\mathcal{A}$ and $\mathcal{A}'$ be the closure of $P_\nno$ and $P_\nn^0$ respectively. 
 We denote by $s_\kk$ (resp. $(s'_\kk,m_\kk)$) the state of automaton~$\mathcal{A}$ (resp. $\mathcal{A}'$) after reading the $k$-th letter of $\Pi(\ww)$. If $\ww$ does not contains an $\aa\indi\nn$-barrier then $(s'_\kk,m_\kk)$ is equal to $(s_\kk,\emptyset)$ for all $\kk\in[1,\ell]$. Hence $\Pi(\ww)$ is accepted or not by the two automata and in particular $m_\ell$ is the empty set.
 Assume now $\ww$ contains an $\aa\indi\nn$-barrier. 
 Let $\ell'$ be the first occurrence on such a barrier in~$\Pi(\ww)$.
 By construction of $\mu_\nn$ we have $(s'_\kk,m_\kk)$ with $\aa\indi\nn\in m_\kk$ for $\kk\geq\ell'$ except if $s_\kk=\otimes$. 
 As~$\mathcal{A}$ (resp. $\mathcal{A}'$) recognizes the word $\Pi(\ww)$ if and only if $s_\ell$ (resp. $s'_\ell)$ is different from~$\otimes$, the word $\ww$ is recognized or not by both automata. 
 Moreover, in this case $m_\ell$ contains $\aa\indi\nn$.
\end{proof}

As the only $\aa\indi4$-barrier in $\SPBKL4$ is $\aa13$, the partial automaton  $P_4^0$ is obtained from $P_3$ by connecting edges labelled $\aa13$ to a copy of $P_3$, as illustrated on figure~\ref{F:A:Mem1}

\begin{figure}[h!]
\label{F:A:Mem1}
\begin{center}
  \begin{tikzpicture}[x=0.05cm,y=0.05cm,every path/.style={->,>=latex,line width=0.8,label distance=0cm}]
\state{A1}{(240:20)}{0,1}{\emptyset};
\state{A2}{(120:20)}{0,2}{\emptyset};
\state{A0}{(0:20)}{0,3}{\emptyset};
\initstatel{A1}{180}{\aaa23};
\initstatelg{A2}{180}{\aaa13};
\myloop{A1}{\aaa23}{240}{-8};
\myloopg{A2}{}{120}{-9};
\myloop{A0}{\aaa12}{180}{-6};
\path[color=gray] (A1) edge [bend left] node[left=-2pt] {} (A2);
\path (A2) edge [bend left] node[below left=-3pt] {$\scriptstyle\aaa12$}(A0);
\path (A0) edge [bend left] node[below right=-3pt] {$\scriptstyle\aaa23$}(A1);
\begin{scope}[shift={(60,0)}]
\state{B1}{(240:20)}{0,1}{\aaa24};
\state{B2}{(120:20)}{0,2}{\aaa24};
\state{B0}{(0:20)}{0,3}{\aaa24};
\myloop{B1}{\aaa23}{240}{-8};
\myloop{B2}{\aaa13}{120}{-9};
\myloop{B0}{\aaa12}{0}{-6};
\path (B1) edge [bend left] node[right=-2pt] {$\scriptstyle\aaa13$} (B2);
\path (B2) edge [bend left] node[above right=-3pt] {$\scriptstyle\aaa12$}(B0);
\path (B0) edge [bend left] node[below right=-3pt] {$\scriptstyle\aaa23$}(B1);
\end{scope}
\initstateld{B2}{165}{\aaa13};
\draw[dashed] (A2) to [out=20,in=192] node[above=-2pt,pos=0.2]{$\scriptstyle\aaa13$} (B2); 
\draw[dashed] (A1) to [out=-35,in=-130] node[below,pos=0.25]{$\scriptstyle\aaa13$} (35,-10) to [out=50,in=220]  (B2); 
\begin{scope}[shift={(140,-15)}]
  \state{A1}{(0,0)}{0,1}{\emptyset};
  \state{A2B}{(0,30)}{0,2}{\aaa24};
  \state{A0B}{(30,30)}{0,3}{\aaa24};
  \state{A1B}{(30,0)}{0,1}{\aaa24};
  \initstatel{A1}{180}{\aaa23};
  \initstatel{A2B}{180}{\aaa13};
  \myloop{A1}{\aaa23}{225}{-9};
  \myloop{A2B}{\aaa13}{135}{-10};
  \myloop{A0B}{\aaa12}{45}{-10};  
  \myloop{A1B}{\aaa23}{315}{-9};

  \path (A1) edge node[left=-3pt] {$\scriptstyle\aaa13$} (A2B);
  \path (A2B) edge node[above=-3pt] {$\scriptstyle\aaa12$} (A0B);
  \path (A0B) edge node[right=-3pt] {$\scriptstyle\aaa23$} (A1B);
  \path (A1B) edge node[above right=-3pt] {$\scriptstyle\aaa13$} (A2B);
\end{scope}
\end{tikzpicture}
\end{center}
\caption{The partial automaton $P_4^0$. Obsolete transitions from $P_3$ are in gray. New added transitions are dashed. The right partial automaton is $P_4^0$ without inaccessible states.}
\end{figure}
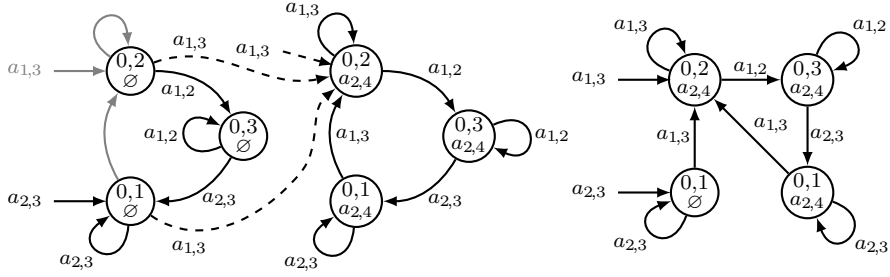

For $t=(0,s,m)\in S_n^0$ we define $\ff\nn^\kk(t)$ to be $(k,s,m)$.
We also define $S_n^k$ to be $\ff\nn^\kk\left(S_n^0\right)$ and  
$$
P_n^k=\left(S_n^\kk,\ff\nn^\kk\left(\SPBKL\nno\right),\mu_\nn^\kk,I_\nn^\kk\right)
$$
to be the partial automaton given by $I_n^k(\ff\nn^\kk(x))=\ff\nn^\kk\left(I_n^0(x)\right)$
and $$\mu_n^k((\kk,s,m),\ff\nn^\kk(x))=\ff\nn^\kk\left(\mu_n^0((0,s,m),x)\right)$$ with the convention $\ff\nn^\kk(\otimes)=\otimes$. In other words, $P_n^k$ is obtained from~$P_n^0$ by replacing the letter $x$ by $\ff\nn^\kk(x)$ and state $(0,s,m)$ by $(k,s,m)$. We obtain immediately that $P_n^k$ recognizes the word $\ff\nn^\kk\left(\Pi(\ww)\right)$ if and only if $P_n^0$ recognizes $\Pi(\ww)$.

We can now construct the partial automaton $P_n$ by plugging together $\nn$ partial automaton $P_n^k$ for $\kk\in[0,\nno]$ together. 

\begin{defi}
 We define $P_n=(S_\nn^\ast\cup\{\otimes\},\SPBKL\nn,\mu_\nn^\ast,I_\nn)$, with $S_\nn^\ast=S_\nn^0\sqcup...\sqcup S_\nn^k$ to be the partial automaton given by
 $$
 I_\nn(x)=\begin{cases}
           I_n^1(\aa\indio\nno) &\text{if $\xx=\aa\indi\nn$ with $\indi\not=1$,}\\
           I_n^2(\aa\nnt\nno) & \text{if $\xx=\aa1\nn$,}\\
           \otimes & \text{otherwise.}
           \end{cases}
 $$
 and with transition function
 $$
 \mu_\nn^\ast((k,s,m),\ff\nn^\kk(\xx))=\begin{cases}
                                   \mu_n^k((k,s,m),\ff\nn^\kk(\xx))&\text{if $\xx\in\SPBKL\nno$,}\\
                                   I_n^\kkp(\ff\nn^\kk(\xx))&\text{if $\xx=\aa\nno\nn$}\\
                                   I_n^\kkp(\ff\nn^\kk(\xx))&\text{if $\xx=\aa\indi\nn$ with $2\leq\indi\leq\nnt$}\\
                                   &\text{and $\aa\indi\nn\in m$,}\\
                                   \otimes&\text{otherwise}
                                  \end{cases}
 $$
 with the convention $I_n^n=I_n^0$.
 \end{defi}

We summarize the construction of the partial automaton $P_n$ on the following diagram. 

$$
 \begin{tikzpicture}[x=0.05cm,y=0.05cm,every path/.style={->,>=latex,line width=0.8,label distance=0cm}]
  \ssstate{P0}{(180:30)}{P_n^0};
  \ssstate{P1}{(120:30)}{P_n^1};
  \ssstate{P2}{(60:30)}{P_n^2};
  \ssstate{Pk}{(0:30)}{P_n^k};
  \ssstate{Pkp}{(300:30)}{P_n^\kkp};
  \ssstate{Pno}{(240:30)}{P_n^\nno};
  \myloop{P0}{\SPBKL\nno}{180}{-6};
  \myloop{P1}{\phi_n\left(\SPBKL{n-1}\right)}{120}{-8};
  \myloop{P2}{\phi_n^2\left(\SPBKL{n-1}\right)}{60}{-8};
  \myloop{Pk}{\phi_n^k\left(\SPBKL{n-1}\right)}{0}{-6};
  \myloop{Pkp}{\phi_n^{k+1}\left(\SPBKL{n-1}\right)}{300}{-8};
  \myloop{Pno}{\phi_n^{n-1}\left(\SPBKL{n-1}\right)}{240}{-8};
  \draw (P0) to [out=75,in=225] node[above left=-2pt] {$\scriptstyle T_0$}(P1);
  \draw (P1) to [out=15,in=165] node[above=-1pt] {$\scriptstyle T_1$}(P2);
  \draw[dashed] (P2) to [out=315,in=105] (Pk);
  \draw (Pk) to [out=255,in=45] node[below right=-2pt] {$\scriptstyle T_k$}(Pkp);
  \draw[dashed] (Pkp) to [out=195,in=345] (Pno);
  \draw (Pno) to [out=135,in=285] node[below left=-2pt] {$\scriptstyle T_\nno$} (P0);
  \draw[-] (0,0) node {$P_\nn$};
  \draw[<-,>=latex,line width=0.8] (P1.center) ++ (180:12pt) --++ (180:20pt) node[left]{$\substack{\aa\indi\nn\\\scriptscriptstyle\indi\not=1}$};
  \draw[<-,>=latex,line width=0.8] (P2.center) ++ (0:12pt) --++ (0:20pt) node[right]{$\scriptstyle\aa1n$};
 \end{tikzpicture}
 $$
 An arrow labelled $T_\kk$ represents the set of transitions $\mu_\nn^\ast((\kk,s,m,\ff\nn^\kk(\aa\indi\nn))$.

\begin{lemm}
\label{L:Automata}
 The partial automaton $P_\nn$ recognizes the language $\Pi\left(R_\nn^\ast\right)$.
\end{lemm}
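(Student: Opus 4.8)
The proof goes by induction on $\nn\ge 3$. The base case $\nn=3$ is exactly Figure~\ref{F:P3}, and is readily verified against Proposition~\ref{P:RNF3}: a $3$-rotating word lies in $R_3^\ast$ precisely when its exponent $e_1$ vanishes, and the closure of $P_3$ accepts exactly the mirror words of such words. So fix $\nn\ge 4$ and assume that $P_\nno^\ast$ recognizes $\Pi(R_\nno^\ast)$.

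Two ingredients feed the inductive step. First, a twisted form of Proposition~\ref{P:A:Mem}: since $P_\nn^\kk$ is literally $P_\nn^0$ with its letters renamed through $\ff\nn^\kk$ and its states through $\fff\nn^\kk$, the memory coordinate staying unchanged, the partial automaton $P_\nn^\kk$ recognizes $\Pi\bigl(\ff\nn^\kk(R_\nno^\ast)\bigr)$ and, on an accepted input $\ff\nn^\kk(\Pi(\vv))$ with $\vv\in R_\nno^\ast$, halts in a state $(\kk,s,m)$ where $m$ is exactly the set of letters $\aa\indi\nn$ for which $\vv$ contains an $\aa\indi\nn$-barrier. Second, a combinatorial description of $R_\nn^\ast$ itself: combining Theorem~\ref{T:Main2} and Corollary~\ref{C:Main} with the elementary facts that $\ff\nn$ carries letters of type $\aa{..}\nno$ to letters of type $\aa{..}\nn$ and that $\ff\nn^2(\aa\nnt\nno)=\aa1\nn$, a non-empty $\SPBKL\nn$-word $\ww$ lies in $R_\nn^\ast$ if and only if it can be written $\ww=\ff\nn^\brdio(\ww_\brdi)\cdots\ff\nn(\ww_2)$ with $\brdi\ge 2$, $\ww_\brdi\ne\ew$, each $\ww_\kk$ an $(\nno)$-rotating word, and the sequence $(\ww_\brdi,\ldots,\ww_2,\ew)$ satisfying Conditions~$(i)$--$(iv)$ of Corollary~\ref{C:Main}; the trailing block $\ww_1$ of the decomposition furnished by Corollary~\ref{C:Main} is forced to be empty exactly because $\ww$ ends with a letter of type $\aa{..}\nn$. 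Writing $c=\min\{\kk\ge 2 : \ww_\kk\ne\ew\}$, necessarily $c\in\{2,3\}$, and $\Pi(\ww)=\ff\nn^{c-1}(\Pi(\ww_c))\cdot\ff\nn^c(\Pi(\ww_{c+1}))\cdots\ff\nn^\brdio(\Pi(\ww_\brdi))$.

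The heart of the argument is a block-by-block matching between accepting runs of $\mathcal{A}(P_\nn^\ast)$ on $\Pi(\ww)$ and such decompositions (the empty word being handled trivially, since $\mathcal{A}(P_\nn^\ast)$ accepts it at $\circ$ and $\ew\in R_\nn^\ast$). In the forward direction, the first letter of $\Pi(\ww)$ is the image under $\ff\nn^{c-1}$ of the last letter of $\ww_c$; this image is of type $\aa{..}\nn$ with first index different from $1$ when $c=2$, and is $\aa1\nn$ when $c=3$, so the initial map $I_\nn^\ast$ enters the copy $P_\nn^{c-1}$ at the correct state. While inside $P_\nn^{c-1+j}$ the run reads $\ff\nn^{c-1+j}(\Pi(\ww_{c+j}))$, which is accepted because $\ww_{c+j}\in R_\nno^\ast$, and halts with memory $m$ equal to the set of letters $\aa\indi\nn$ for which $\ww_{c+j}$ contains an $\aa\indi\nn$-barrier. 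The next letter of $\Pi(\ww)$ is the image under $\ff\nn^{c-1+j}$ of $\ff\nn$ applied to the last letter of $\ww_{c+j+1}$, and Condition~$(iv)$ of Corollary~\ref{C:Main} is exactly what lets the matching $T$-transition of $\mu_\nn^\ast$ fire: through its $\aa\nno\nn$-branch when that last letter is $\aa\nnt\nno$, and through the branch demanding the relevant barrier to lie in $m$ otherwise; the cyclic plugging of the automata $P_\nn^\kk$ accommodates arbitrarily large $\brdi$. Iterating consumes all of $\Pi(\ww)$ and ends at an accepting state. Conversely, an accepting run on a non-empty $\ww$ must first leave the state $\circ$ through a letter of type $\aa{..}\nn$, then traverse a cyclic chain $P_\nn^{k_1},P_\nn^{k_1+1},\dots$ with $k_1\in\{1,2\}$, staying in $P_\nn^{k_1+j}$ for a non-empty block $B_j=\ff\nn^{k_1+j}(\Pi(\vv_{j+1}))$ in which $\vv_{j+1}$ is an $(\nno)$-rotating word ending with a letter of type $\aa{..}\nno$ (since its entry letter is of type $\aa{..}\nn$ and, just before the next $T$-transition, the run occupies an accepting state of $P_\nn^{k_1+j}$). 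Rewriting $\ww=\ff\nn^{k_1+m-1}(\vv_m)\cdots\ff\nn^{k_1}(\vv_1)$ and putting $\ww_1=\ew$, together with $\ww_2=\ew$ when $k_1=2$, and $\ww_{k_1+j}=\vv_{j+1}$, the conditions that made each $T$-transition fire turn out to be precisely Conditions~$(ii)$--$(iv)$ of Corollary~\ref{C:Main} for $(\ww_\brdi,\ldots,\ww_1)$, so $\ww$ is $\nn$-rotating by Corollary~\ref{C:Main}, and, ending with a letter of type $\aa{..}\nn$, it lies in $R_\nn^\ast$.

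The one genuine difficulty I anticipate is bookkeeping: keeping consistent track of the mirror operator $\Pi$, of the twists $\ff\nn^\kk$ taken modulo $\nn$ because of the cyclic plugging, and of the two possible entry copies $P_\nn^1$ and $P_\nn^2$ (according as $\ww_2$ is non-empty or empty), together with a careful verification that the memory $m$ maintained inside each $P_\nn^\kk$ records exactly the information on encountered barriers that Condition~$(iv)$ of Corollary~\ref{C:Main} needs at the adjacent block boundary. The case $k_1=2$, in which reading $\aa1\nn$ as the very first letter forces the last letter of $\vv_1$ to be $\aa\nnt\nno$ and makes Condition~$(iv)$ vacuous at the first boundary, deserves separate attention.
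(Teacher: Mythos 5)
Your proof is correct and follows essentially the same route as the paper's: decompose $\Pi(\ww)$ into blocks $\ff\nn^{\kk}(\Pi(\ww_{\kk+1}))$, use the twisted copies $P_\nn^\kk$ with their barrier memory to handle each block and Condition~$(iv)$ at the block boundaries, and conclude via Corollary~\ref{C:Main}. You merely spell out the two directions of the correspondence between accepting runs and admissible decompositions more explicitly than the paper does.
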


\begin{proof}
 Let $\mathcal{A}$ be the closure of $P_\nn$ and $\ww$ be a non empty $\SPBKL\nn$-word. 
 There exists a unique sequence $(\ww_b,...,\ww_1)$ of $\SPBKL\nno$-words such that $\ww_b\not=\varepsilon$, $\ww$ is equal to 
 $$
 \ff\nn^\brdio(\ww_\kk)\cdot...\cdot\ff\nn(\ww_2)\cdot\ww_1
 $$ and for all $\ii$, the word $\ff\nn^\ii(\ww_\ii)$ is the maximal suffix of $\ff\nn^\kko(\ww_\kk)\cdot...\cdot\ff\nn^\ii(\ww_i)$ belonging to $\ff\nn^\ii\left(\SPBKL\nno\right)$.
 By definition of $I_n$, the word $\Pi(w)$ is accepted by~$P_\nn$ only if $\ww$ ends by a letter $\aa\indi\nn$ for some $\indi$. 
 We assume now that $\ww$ is such a word. 
 Thus the first integer $\jj$ such that $\ww_\jj$ is non empty is $2$ or $3$. 
 More precisely, we have $\jj=2$ if $\indi>1$ and $\jj=3$ if $\indi=1$ holds. 
 In both cases, the reading of $\Pi(\ww)$ starts by a state coming form $P_\nn^\jj$. 
 The automaton reaches a state different from one of $P_\nn^\jj$ if it goes to the state $\otimes$ or if it reads a letter outside of $\ff\nn^\jjo(\SPBKL\nn)$, \ie, a letter of $\ff\nn^\jj(\ww_\jjp)$. This is a general principle : after reading a letter of $\ff\nn^\iio(\ww_\ii)$ the automaton $\mathcal{A}$ is in state $(t,s,m)$ with $t=\ii\mod\nn$. By construction of $P_\nn^\tt$, the word $\ff\nn^\iio(\ww_\ii)$ provides an accepted state if and only if $\ww_\ii$ is a word of $\Pi(R_\nno)$. At this point we have shown that $\Pi(\ww)$ is accepted by $\mathcal{A}$ only if $\ww$ is empty or if $\ww$ satisfies $\last\www=\aa\indi\nn$ together with Conditions $(i)$, $(ii)$ and $(iii)$ of Corollary~\ref{C:Main}.
 Let $\ii$ be in $[\jj,\kko]$, and assume that $\mathcal{A}$ is in an acceptable state $(t,s,m)$ with $t=i\mod n$ after reading the word $\Pi(\ff\nn^\iio(\ww_\ii)\cdot...\cdot\ff\nn(\ww_2)\cdot\ww_1)$. 
 We denote by $\xx$ the letter~$\last{\ww}_\iip$.
 By construction of $\ww_\iip$ we have $\xx\not\in\ff\nn^\iio(\SPBKL\nno)$ and so $\xx=\ff\nn^\ii(\aa\indi\nn)$ for some $\indi$.
By definition of $\mu_\nn^\ast$ we have $\mu_\nn^\ast((t,s,m),\ff\nn^\ii(\aa\indi\nn))\not=\otimes$ if and only if $\indi=\nno$ of $\indi\in[2,\nnt]$ and $\aa\indi\nn\in m$. By construction of $P_\nn^\tt$, we have $\aa\indi\nn\in m$ if and only if $\ww_\ii$ contains an $\aa\indi\nn$, which corresponds to Condition $(iv)$ of Corollary~\ref{C:Main}.
Eventually, by Corollary~\ref{C:Main}, the word $\Pi(\ww)$ is accepted by $\mathcal{A}$ if and only if $\ww\in R_\nn^\ast$.
\end{proof}
 
 \begin{figure}[h!]
\begin{tikzpicture}[x=0.05cm,y=0.05cm,every path/.style={->,>=latex,line width=0.8,label distance=0cm}]
  \state{A1}{(0,0)}{1,1}{\emptyset};
  \state{A2B}{(0,30)}{1,2}{\aaa24};
  \state{A0B}{(30,30)}{1,3}{\aaa24};
  \state{A1B}{(30,0)}{1,1}{\aaa24};
  \myloop{A1}{\aaa34}{225}{-9};
  \myloop{A2B}{\aaa24}{135}{-10};
  \myloops{A0B}{\aaa23}{45}{30}{42};  
  \myloops{A1B}{\aaa34}{315}{30}{-13};
  \path (A1) edge node[left=-3pt] {$\scriptstyle\aaa24$} (A2B);
  \path (A2B) edge node[above=-3pt] {$\scriptstyle\aaa23$} (A0B);
  \path (A0B) edge node[right=-3pt] {$\scriptstyle\aaa34$} (A1B);
  \path (A1B) edge node[above right=-3pt] {$\scriptstyle\aaa24$} (A2B);
  \coordinate (CANB) at (70,0);
  \coordinate (CANB2) at (35,-20);
  \coordinate (CAB) at (70,30);
  \coordinate (CAB2) at (35,50);
 
 \begin{scope}[shift={(65,-65)}]
  \state{B1}{(0,30)}{2,1}{\emptyset};
  \state{B2B}{(30,30)}{2,2}{\aaa24};
  \state{B0B}{(30,0)}{2,3}{\aaa24};
  \state{B1B}{(0,0)}{2,1}{\aaa24};
  \myloop{B1}{\aaa14}{125}{-9};
  \myloop{B2B}{\aaa13}{45}{-10};
  \myloops{B0B}{\aaa34}{315}{42}{0};  
  \myloops{B1B}{\aaa14}{225}{-12}{0};
  \path (B1) edge node[above=-3pt] {$\scriptstyle\aaa13$} (B2B);
  \path (B2B) edge node[right=-3pt] {$\scriptstyle\aaa34$} (B0B);
  \path (B0B) edge node[below=-1pt] {$\scriptstyle\aaa14$} (B1B);
  \path (B1B) edge node[above left=-3pt] {$\scriptstyle\aaa13$} (B2B);
  \coordinate (CBNB) at (0,-40);
  \coordinate (CBNB2) at (-20,-5);
  \coordinate (CBB) at (30,-40);
  \coordinate (CBB2) at (50,-5);
 \end{scope}
 \begin{scope}[shift={(3,-130)}]
  \state{C1}{(30,30)}{3,1}{\emptyset};
  \state{C2B}{(30,0)}{3,2}{\aaa24};
  \state{C0B}{(0,0)}{3,3}{\aaa24};
  \state{C1B}{(0,30)}{3,1}{\aaa24};
  \myloop{C1}{\aaa12}{45}{-11};
  \myloop{C2B}{\aaa24}{315}{-10};
  \myloops{C0B}{\aaa14}{225}{0}{-13};  
  \myloops{C1B}{\aaa12}{135}{0}{43};
  \path (C1) edge node[right=-3pt] {$\scriptstyle\aaa24$} (C2B);
  \path (C2B) edge node[below=-1pt] {$\scriptstyle\aaa14$} (C0B);
  \path (C0B) edge node[left=-3pt] {$\scriptstyle\aaa12$} (C1B);
  \path (C1B) edge node[below left=-3pt] {$\scriptstyle\aaa24$} (C2B);
  \coordinate (CCNB) at (-40,30);
  \coordinate (CCNB2) at (-5,50);
  \coordinate (CCB) at (-40,0);
  \coordinate (CCB2) at (-5,-20);
 \end{scope}
  \begin{scope}[shift={(-65,-65)}]
  \state{D1}{(30,0)}{0,1}{\emptyset};
  \state{D2B}{(0,0)}{0,2}{\aaa24};
  \state{D0B}{(0,30)}{0,3}{\aaa24};
  \state{D1B}{(30,30)}{0,1}{\aaa24};
  \myloop{D1}{\aaa23}{315}{-9};
  \myloop{D2B}{\aaa13}{225}{-10};
  \myloops{D0B}{\aaa12}{135}{-12}{29};  
  \myloops{D1B}{\aaa23}{45}{42}{29};
  \path (D1) edge node[below=-1pt] {$\scriptstyle\aaa13$} (D2B);
  \path (D2B) edge node[left=-3pt] {$\scriptstyle\aaa12$} (D0B);
  \path (D0B) edge node[above=-3pt] {$\scriptstyle\aaa23$} (D1B);
  \path (D1B) edge node[below right=-3pt] {$\scriptstyle\aaa13$} (D2B);
  \coordinate (CDNB) at (30,70);
  \coordinate (CDNB2) at (50,35);
  \coordinate (CDB) at (0,70);
  \coordinate (CDB2) at (-20,35);
 \end{scope}

 \draw[-] (A1) to [out=315,in=180] (CANB2);
 \draw[-] (A2B) to [out=295,in=180] (CANB2);
 \draw[-] (CANB2) to [out=0,in=180] (CANB);
 \draw[-] (A1B) to (CANB);
 \draw[-] (A0B) to [out=315,in=180] (CANB);
 \draw(CANB)  to [out=0,in=45,looseness=1.6] node[below,pos=0]{$\scriptstyle\aaa14$} (B1);

 \draw[-] (A2B) to [out=45,in=180] (CAB2) to [out=0,in=180] (CAB);
 \draw[-] (A0B) to (CAB);
 \draw[-] (A1B) to [out=45,in=180] (CAB);
 \draw(CAB) to [out=0,in=90,looseness=1.3] node[above=-2pt,pos=0]{$\scriptstyle\aaa13$} (B2B);
 
 \draw[-] (B1) to [out=225,in=90] (CBNB2);
 \draw[-] (B2B) to [out=205,in=90] (CBNB2);
 \draw[-] (CBNB2) to [out=270,in=90] (CBNB);
 \draw[-] (B1B) to (CBNB);
 \draw[-] (B0B) to [out=225,in=90] (CBNB);
 \draw(CBNB)  to [out=270,in=315,looseness=1.6] node[left=-2,pos=0]{$\scriptstyle\aaa12$} (C1);
 
 \draw[-] (B2B) to [out=315,in=90] (CBB2) to [out=270,in=90] (CBB);
 \draw[-] (B0B) to (CBB);
 \draw[-] (B1B) to [out=315,in=90] (CBB);
 \draw(CBB) to [out=270,in=0,looseness=1.3] node[right=-1pt,pos=0]{$\scriptstyle\aaa24$} (C2B);
 
 \draw[-] (C1) to [out=125,in=0] (CCNB2);
 \draw[-] (C2B) to [out=115,in=0] (CCNB2);
 \draw[-] (CCNB2) to [out=180,in=0] (CCNB);
 \draw[-] (C1B) to (CCNB);
 \draw[-] (C0B) to [out=135,in=0] (CCNB);
 \draw(CCNB) to [out=180,in=225,looseness=1.6] node[above,pos=0]{$\scriptstyle\aaa23$} (D1);
 
 \draw[-] (C2B) to [out=225,in=0] (CCB2) to [out=180,in=0] (CCB);
 \draw[-] (C0B) to (CCB);
 \draw[-] (C1B) to [out=225,in=0] (CCB);
 \draw(CCB) to [out=180,in=270,looseness=1.3] node[below,pos=0]{$\scriptstyle\aaa13$} (D2B);
 
 \draw[-] (D1) to [out=35,in=270] (CDNB2);
 \draw[-] (D2B) to [out=25,in=270] (CDNB2);
 \draw[-] (CDNB2) to [out=90,in=270] (CDNB);
 \draw[-] (D1B) to (CDNB);
 \draw[-] (D0B) to [out=25,in=270] (CDNB);
 \draw(CDNB) to [out=90,in=135,looseness=1.6] node[right=-2,pos=0]{$\scriptstyle\aaa34$} (A1);
 
 \draw[-] (D2B) to [out=135,in=270] (CDB2) to [out=90,in=270] (CDB);
 \draw[-] (D0B) to (CDB);
 \draw[-] (D1B) to [out=135,in=270] (CDB);
 \draw(CDB) to [out=90,in=180,looseness=1.3] node[left=-2,pos=0]{$\scriptstyle\aaa24$} (A2B);
 \initstatea{A2B}{90}{\aaa24};
 \initstateb{A1}{270}{\aaa34};
 \initstatel{B1}{180}{\aaa14}
 \end{tikzpicture}
 \caption{Partial automaton recognizing the language $\Pi\left(R_4^\ast\right)$.}
 \end{figure}
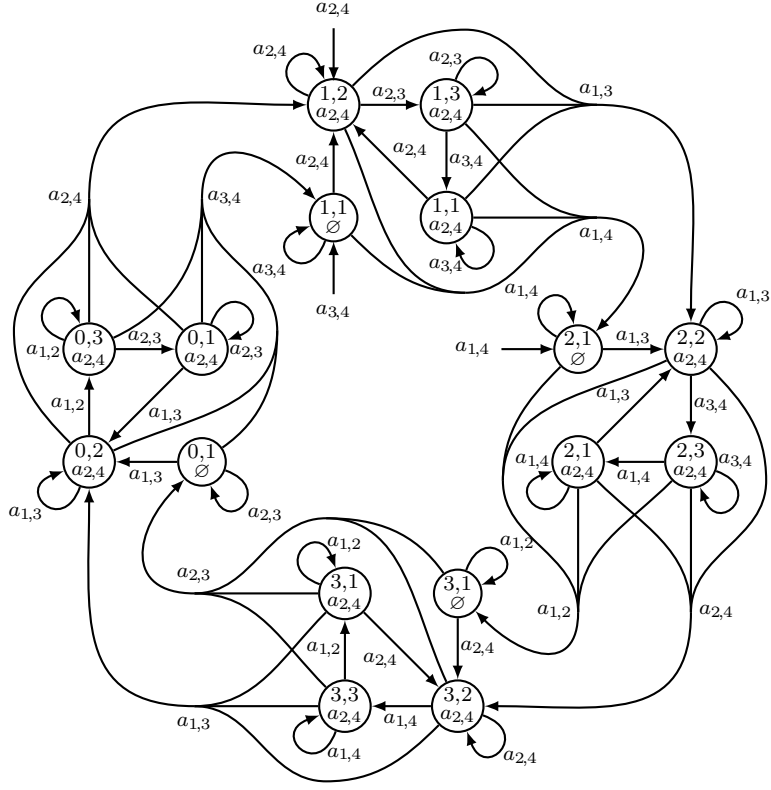

Assume that an automaton $\mathcal{A}_{\nno}=(S_\nno\cup\{\otimes\},\SPBKL\nno,\mu_\nno,i)$ recognizing the language~$\Pi(R_\nno)$ for $\nn\geq4$ is given. Using the partial automaton $P_\nn=(S_\nn^\ast\cup\{\otimes\},\SPBKL\nn,\mu_\nn^\ast,I_\nn)$ we construct the automaton $\mathcal{A}_\nn=(S_\nn\cup\{\otimes\},\SPBKL\nn,\mu_\nn,i)$ defined by $S_\nn=S_\nno\sqcup S_\nn^\ast$ and
$$
\mu_\nn(s,x)=\begin{cases}
              \mu_\nno(s,x)&\text{if $s\in S_\nno$ and $x\in\SPBKL\nno$,}\\
              I_\nn(x)&\text{if $s\in S_\nno$ and $x\in\SPBKL\nn\setminus\SPBKL\nno$,}\\
             \mu_\nn^\ast(s,x) & \text{if $s\in S_\nn^\ast$.}
             \end{cases}
$$

\begin{prop}
\label{P:Automata}
If $\mathcal{A}_\nno$ recognizes $\Pi(R_\nno)$, the automaton $\mathcal{A}_\nn$ recognizes the language $\Pi(R_\nn)$.
\end{prop}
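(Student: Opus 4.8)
The plan is to recognize $\mathcal{A}_\nn$ as the automaton that reads its input first through $\mathcal{A}_\nno$ and then, past the first letter lying outside $\SPBKL\nno$, through the closure of $P_\nn^\ast$. Fix an $\SPBKL\nn$-word $\ww$ and factor it as $\ww=\ww^\ast\ww_1$, where $\ww_1$ is the longest suffix of $\ww$ that is an $\SPBKL\nno$-word; then $\ww^\ast$ is either empty or ends with a letter of type $\aa{..}\nn$, and $\Pi(\ww)=\Pi(\ww_1)\,\Pi(\ww^\ast)$. By the definition of $\mu_\nn$, while reading $\Pi(\ww_1)$ the automaton $\mathcal{A}_\nn$ stays in $S_\nno$ and behaves exactly like $\mathcal{A}_\nno$ from its initial state $i$; the moment it reads the first letter of $\Pi(\ww^\ast)$ (which is not in $\SPBKL\nno$) it passes through $I_\nn^\ast$ into $S_\nn^\ast$ and thereafter stays in $S_\nn^\ast$, behaving exactly like the closure $\mathcal{A}(P_\nn^\ast)$ reading $\Pi(\ww^\ast)$ from its initial state $\circ$. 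Since every non-dead state is accepting, it follows that $\mathcal{A}_\nn$ accepts $\Pi(\ww)$ if and only if $\mathcal{A}_\nno$ accepts $\Pi(\ww_1)$ and $\mathcal{A}(P_\nn^\ast)$ accepts $\Pi(\ww^\ast)$; by the hypothesis on $\mathcal{A}_\nno$ and by Lemma~\ref{L:Automata}, this amounts to $\ww_1\in R_\nno$ and $\ww^\ast\in R_\nn^\ast$. So the proposition reduces to proving that $\ww\in R_\nn$ holds if and only if both $\ww_1\in R_\nno$ and $\ww^\ast\in R_\nn^\ast$ hold.

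For the left-to-right implication, suppose $\ww\in R_\nn$ and apply Proposition~\ref{P:Main} to write $\ww$ as in \eqref{E:C:Main}, say $\ww=\ff\nn^\brdio(\www_\brdi)\cdots\ff\nn(\www_2)\cdot\www_1$ with $\www_\brdi\ne\varepsilon$, all blocks $(\nno)$-rotating, and Conditions $(i)$--$(iv)$ of Corollary~\ref{C:Main} satisfied. The key point is that $\ff\nn^\brdio(\www_\brdi)\cdots\ff\nn(\www_2)$ is empty or ends with a letter of type $\aa{..}\nn$: if it is non-empty then, by Condition~$(ii)$ (which forces $\www_\kk\ne\varepsilon$ for $\kk\ge3$), its rightmost non-empty block is $\ff\nn(\www_2)$ or $\ff\nn^2(\www_3)$; in the first case $\www_2$ ends with $\aa\indio\nno$ by Condition~$(iii)$, hence $\ff\nn(\www_2)$ ends with $\ff\nn(\aa\indio\nno)=\aa\indi\nn$ by \eqref{E:Phi}; in the second case $\www_2=\varepsilon$ contains no barrier, so Condition~$(iv)$ forces $\www_3$ to end with $\aa\nnt\nno$, hence $\ff\nn^2(\www_3)$ ends with $\ff\nn^2(\aa\nnt\nno)=\aa1\nn$. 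Therefore $\www_1$ is exactly the longest $\SPBKL\nno$-suffix of $\ww$, i.e. $\www_1=\ww_1$ and $\ww^\ast=\ff\nn^\brdio(\www_\brdi)\cdots\ff\nn(\www_2)$. Now $\ww_1\in R_\nno$ by Condition~$(i)$; and replacing the bottom block $\www_1$ by $\varepsilon$ preserves Conditions $(i)$--$(iv)$ — these imposing nothing on the bottom block beyond being $(\nno)$-rotating — so Corollary~\ref{C:Main} shows that $\ff\nn^\brdio(\www_\brdi)\cdots\ff\nn(\www_2)\cdot\varepsilon=\ww^\ast$ is $\nn$-rotating; being empty or ending with a letter of type $\aa{..}\nn$, it lies in $R_\nn^\ast$.

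For the converse: if $\ww^\ast=\varepsilon$ then $\ww=\ww_1\in R_\nno\subseteq R_\nn$. If $\ww^\ast\ne\varepsilon$, then, being a non-empty word of $R_\nn^\ast$, it ends with a letter of type $\aa{..}\nn$, so the decomposition of $\ww^\ast$ given by Proposition~\ref{P:Main} has empty bottom block (otherwise $\ww^\ast$ would end with a letter of $\SPBKL\nno$), say $\ww^\ast=\ff\nn^\brdio(\www_\brdi)\cdots\ff\nn(\www_2)$ with $(\www_\brdi,\dots,\www_2,\varepsilon)$ satisfying Conditions $(i)$--$(iv)$ of Corollary~\ref{C:Main}. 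Replacing $\varepsilon$ by $\ww_1$ again preserves those conditions — now using $\ww_1\in R_\nno$ for Condition~$(i)$ — so Corollary~\ref{C:Main} gives $\ww=\ff\nn^\brdio(\www_\brdi)\cdots\ff\nn(\www_2)\cdot\ww_1=\ww^\ast\ww_1\in R_\nn$. This establishes the equivalence, hence the proposition.

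The transition-tracing of the first paragraph is routine, being immediate from the definitions of $\mathcal{A}_\nn$ and of the closure of a partial automaton. The delicate step — the one I would carry out most carefully — is showing that the $\aa{..}\nn$-part $\ww^\ast$ of a rotating word is again in $R_\nn^\ast$: there one must simultaneously establish that $\ww^\ast$ ends with a letter of type $\aa{..}\nn$ (via Condition~$(iii)$, and, in the degenerate case $\www_2=\varepsilon$, via Condition~$(iv)$ of Corollary~\ref{C:Main}) and observe that Conditions $(i)$--$(iv)$ of Corollary~\ref{C:Main} are insensitive to swapping the bottom block between $\varepsilon$ and a $(\nno)$-rotating word.
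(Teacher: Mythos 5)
Your proof is correct and follows essentially the same route as the paper's: factor $\ww$ as $\ww^\ast\ww_1$ with $\ww_1$ the maximal $\SPBKL\nno$-suffix, trace $\mathcal{A}_\nn$ through $\mathcal{A}_\nno$ and then the closure of $P_\nn^\ast$, and reduce everything to the equivalence ``$\ww\in R_\nn$ iff $\ww_1\in R_\nno$ and $\ww^\ast\in R_\nn^\ast$''. The only difference is that you justify this equivalence in detail via Proposition~\ref{P:Main} together with Corollary~\ref{C:Main} (correctly noting that the ``only if'' direction needs more than Corollary~\ref{C:Main}, which the paper cites alone), a welcome sharpening of rigor but not a different approach.
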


\begin{proof}
Let $\ww$ be an $\SPBKL\nn$-word, $\ww_1$ be the maximal suffix of $\ww$ which is an $\SPBKL\nno$-word and $\ww'$ be the corresponding prefix. By Corollary~\ref{C:Main}, the word $\ww$ is rotating if and only if $\ww_1$ and $\ww'$ are.
By construction of $\mathcal{A}_\nn$, the automaton is in acceptable state after reading $\Pi(\ww_1)$ if and only if $\ww_1$ is an $(\nno)$-rotating word. Hence $\ww$ is accepted only if $\ww_1$ is rotating. Assume that it is the case.
By Lemma~\ref{L:Automata} the automaton~$\mathcal{A}_\nn$ is always in an acceptable state after reading $\Pi(\ww')$ if and only if the word~$\ww'$ is rotating. Eventually the word $\Pi(\ww)$ is accepted by $\mathcal{A}$ if and only if $\ww_1$ and $\ww'$ are both rotating, which is equivalent to $\ww$ is rotating.
\end{proof}

By Proposition~\ref{P:Automata}, the language $\Pi(R_\nn)$ is regular and so we obtain:

\begin{thrm}
 The language of $\nn$-rotating words $R_n$ is regular.
\end{thrm}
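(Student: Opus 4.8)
The plan is to prove, by induction on $\nn\ge 2$, that there is a finite-state automaton $\mathcal{A}_\nn$ recognizing the mirror language $\Pi(\RNF\nn)$; the regularity of $\RNF\nn$ itself is then immediate from the fact (Theorem~1.2.8 of~\cite{Epstein1992}, recalled at the beginning of Section~\ref{S:Regular}) that a language is regular if and only if its mirror language is.

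For the base of the induction I would take $\nn=2$ and $\nn=3$. The automaton $\mathcal{A}_2=(\{1,\otimes\},\{\aa12\},\mu_2,\{1\},1,\otimes)$ with $\mu_2(1,\aa12)=1$ recognizes $\RNF2=\Pi(\RNF2)$, and the automaton $\mathcal{A}_3$ displayed after Proposition~\ref{P:RNF3} recognizes $\Pi(\RNF3)$ by that proposition. Both cases are kept as bases because the general inductive step was set up under the hypothesis $\nn\ge 4$.

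For the inductive step, suppose $\nn\ge 4$ and that a finite-state automaton $\mathcal{A}_\nno$ recognizing $\Pi(\RNF\nno)$ has been built. The partial automaton $P_\nn^\ast$, obtained by cyclically plugging the twisted copies $P_\nn^0,\dots,P_\nn^\nno$ of the barrier-tracking automaton $P_\nn^0$, recognizes $\Pi(R_\nn^\ast)$ by Lemma~\ref{L:Automata}; plugging $\mathcal{A}_\nno$ onto $P_\nn^\ast$ as in the construction preceding Proposition~\ref{P:Automata} then yields an automaton $\mathcal{A}_\nn$ recognizing $\Pi(\RNF\nn)$, by that proposition. One also checks that $\mathcal{A}_\nn$ genuinely has finitely many states: its state set is $S_\nn=S_\nno\sqcup S_\nn^\ast$, where $S_\nn^\ast$ is a disjoint union of $\nn$ copies of $S_\nn^0=\{0\}\times S_\nno^\ast\times\mathcal{P}(\{\aa2\nn,\dots,\aa\nnt\nn\})$, so that $\abs{S_\nn}\le\abs{S_\nno}+\nn\cdot 2^{\nn-3}\,\abs{S_\nno^\ast}$ is finite. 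Hence $\Pi(\RNF\nn)$ is regular for every $\nn\ge 2$.

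Applying Theorem~1.2.8 of~\cite{Epstein1992} to $\Pi(\RNF\nn)$, whose mirror language is $\RNF\nn$, we conclude that $\RNF\nn$ is regular. There is no real obstacle left at this stage: the substantive ingredients — the syntactic characterization of rotating words in Corollary~\ref{C:Main}, the encoding of $\aa{..}\nn$-barriers inside the states of $P_\nn^0$, and the cyclic plugging that forces Condition~$(iv)$ of Corollary~\ref{C:Main} — have all been established, and the theorem is just the formal assembly of these pieces, the only point worth a line being that the number of states stays finite along the induction.
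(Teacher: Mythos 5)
Your proposal is correct and follows essentially the same route as the paper: the theorem is obtained there as an immediate consequence of Proposition~\ref{P:Automata} (itself resting on Lemma~\ref{L:Automata} and the inductive construction of $\mathcal{A}_\nn$ from $\mathcal{A}_\nno$ and $P_\nn^\ast$), combined with the mirror-language equivalence from Theorem~1.2.8 of~\cite{Epstein1992}. Your added check that the state set stays finite along the induction is a harmless (and reasonable) explicit verification that the paper leaves implicit.
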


 \begin{figure}[t!]
\begin{tikzpicture}[x=0.05cm,y=0.05cm,every path/.style={->,>=latex,line width=0.8,label distance=0cm}]
\begin{scope}[shift={(15,85)}]
  \sstate{3}{(270:20)}{3};
\sstate{1}{(150:20)}{1};
\sstate{2}{(30:20)}{2};
\sstate{0}{(0,35)}{0};
\path (0) edge node[left,pos=0.2] {$\scriptstyle\aaa23$} (1);
\path (0) edge node[right,pos=0.2] {$\scriptstyle\aaa13$} (2);
\path (1) edge [bend left] node[above=-1pt] {$\scriptstyle\aaa13$} (2);
\path (2) edge [bend left] node[below right=-3pt] {$\scriptstyle\aaa12$} (3);
\path (3) edge [bend left] node[above right=-2pt,pos=0.8] {$\scriptstyle\aaa23$} (1);
\myloop{0}{\aaa12}{45}{-9};
\myloop{2}{\aaa13}{45}{-9};
\myloops{3}{\aaa12}{270}{9}{-28};
\myloops{1}{\aaa23}{135}{-25}{25};
\initstate{0}{135};
\coordinate (CT0) at (-37,20);
  \coordinate (CT1) at (-15,-30);
\coordinate (CT2) at (-35,-35);
\coordinate (CT3) at (-42,20);
\coordinate (CT4) at (45,-30);
  \end{scope}
  
  \state{A1}{(0,0)}{1,1}{\emptyset};
  \state{A2B}{(0,30)}{1,2}{\aaa24};
  \state{A0B}{(30,30)}{1,3}{\aaa24};
  \state{A1B}{(30,0)}{1,1}{\aaa24};
  \myloop{A1}{\aaa34}{225}{-9};
  \myloops{A2B}{\aaa24}{135}{-8}{46};
  \myloops{A0B}{\aaa23}{45}{30}{42};  
  \myloops{A1B}{\aaa34}{315}{30}{-13};
  \path (A1) edge node[left=-2pt,pos=0.7] {$\scriptstyle\aaa24$} (A2B);
  \path (A2B) edge node[above=-3pt] {$\scriptstyle\aaa23$} (A0B);
  \path (A0B) edge node[right=-3pt] {$\scriptstyle\aaa34$} (A1B);
  \path (A1B) edge node[above right=-3pt] {$\scriptstyle\aaa24$} (A2B);
  \coordinate (CANB) at (70,0);
  \coordinate (CANB2) at (35,-20);
  \coordinate (CAB) at (70,30);
  \coordinate (CAB2) at (35,50);
 
 \begin{scope}[shift={(65,-65)}]
  \state{B1}{(0,30)}{2,1}{\emptyset};
  \state{B2B}{(30,30)}{2,2}{\aaa24};
  \state{B0B}{(30,0)}{2,3}{\aaa24};
  \state{B1B}{(0,0)}{2,1}{\aaa24};
  \myloop{B1}{\aaa14}{125}{-9};
  \myloop{B2B}{\aaa13}{45}{-10};
  \myloops{B0B}{\aaa34}{315}{42}{0};  
  \myloops{B1B}{\aaa14}{225}{-12}{0};
  \path (B1) edge node[above=-3pt] {$\scriptstyle\aaa13$} (B2B);
  \path (B2B) edge node[right=-3pt] {$\scriptstyle\aaa34$} (B0B);
  \path (B0B) edge node[above=-3pt] {$\scriptstyle\aaa14$} (B1B);
  \path (B1B) edge node[above left=-3pt] {$\scriptstyle\aaa13$} (B2B);
  \coordinate (CBNB) at (0,-40);
  \coordinate (CBNB2) at (-20,-5);
  \coordinate (CBB) at (30,-40);
  \coordinate (CBB2) at (50,-5);
 \end{scope}
 \begin{scope}[shift={(3,-130)}]
  \state{C1}{(30,30)}{3,1}{\emptyset};
  \state{C2B}{(30,0)}{3,2}{\aaa24};
  \state{C0B}{(0,0)}{3,3}{\aaa24};
  \state{C1B}{(0,30)}{3,1}{\aaa24};
  \myloop{C1}{\aaa12}{45}{-11};
  \myloop{C2B}{\aaa24}{315}{-10};
  \myloops{C0B}{\aaa14}{225}{0}{-13};  
  \myloops{C1B}{\aaa12}{135}{0}{43};
  \path (C1) edge node[right=-3pt] {$\scriptstyle\aaa24$} (C2B);
  \path (C2B) edge node[below=-1pt] {$\scriptstyle\aaa14$} (C0B);
  \path (C0B) edge node[left=-3pt] {$\scriptstyle\aaa12$} (C1B);
  \path (C1B) edge node[below left=-3pt] {$\scriptstyle\aaa24$} (C2B);
  \coordinate (CCNB) at (-40,30);
  \coordinate (CCNB2) at (-5,50);
  \coordinate (CCB) at (-40,0);
  \coordinate (CCB2) at (-5,-20);
 \end{scope}
  \begin{scope}[shift={(-65,-65)}]
  \state{D1}{(30,0)}{0,1}{\emptyset};
  \state{D2B}{(0,0)}{0,2}{\aaa24};
  \state{D0B}{(0,30)}{0,3}{\aaa24};
  \state{D1B}{(30,30)}{0,1}{\aaa24};
  \myloop{D1}{\aaa23}{315}{-9};
  \myloop{D2B}{\aaa13}{225}{-10};
  \myloops{D0B}{\aaa12}{135}{-12}{29};  
  \myloops{D1B}{\aaa23}{45}{42}{29};
  \path (D1) edge node[below=-1pt] {$\scriptstyle\aaa13$} (D2B);
  \path (D2B) edge node[left=-3pt] {$\scriptstyle\aaa12$} (D0B);
  \path (D0B) edge node[below=-1pt] {$\scriptstyle\aaa23$} (D1B);
  \path (D1B) edge node[below right=-3pt] {$\scriptstyle\aaa13$} (D2B);
  \coordinate (CDNB) at (30,70);
  \coordinate (CDNB2) at (50,35);
  \coordinate (CDB) at (0,70);
  \coordinate (CDB2) at (-20,35);
 \end{scope}

 \draw[-] (A1) to [out=315,in=180] (CANB2);
 \draw[-] (A2B) to [out=295,in=180] (CANB2);
 \draw[-] (CANB2) to [out=0,in=180] (CANB);
 \draw[-] (A1B) to (CANB);
 \draw[-] (A0B) to [out=315,in=180] (CANB);
 \draw(CANB)  to [out=0,in=55,looseness=1.3] node[above right=-4,pos=0.2]{$\scriptstyle\aaa14$} (B1);

 \draw[-] (A2B) to [out=45,in=180] (CAB2) to [out=0,in=180] (CAB);
 \draw[-] (A0B) to (CAB);
 \draw[-] (A1B) to [out=45,in=180] (CAB);
 \draw(CAB) to [out=0,in=90,looseness=1.3] node[above=-2pt,pos=0.1]{$\scriptstyle\aaa13$} (B2B);
 
 \draw[-] (B1) to [out=225,in=90] (CBNB2);
 \draw[-] (B2B) to [out=205,in=90] (CBNB2);
 \draw[-] (CBNB2) to [out=270,in=90] (CBNB);
 \draw[-] (B1B) to (CBNB);
 \draw[-] (B0B) to [out=225,in=90] (CBNB);
 \draw(CBNB)  to [out=270,in=325,looseness=1.3] node[left=-2,pos=0]{$\scriptstyle\aaa12$} (C1);
 
 \draw[-] (B2B) to [out=315,in=90] (CBB2) to [out=270,in=90] (CBB);
 \draw[-] (B0B) to (CBB);
 \draw[-] (B1B) to [out=315,in=90] (CBB);
 \draw(CBB) to [out=270,in=0,looseness=1.3] node[right=-1pt,pos=0]{$\scriptstyle\aaa24$} (C2B);
 
 \draw[-] (C1) to [out=125,in=0] (CCNB2);
 \draw[-] (C2B) to [out=115,in=0] (CCNB2);
 \draw[-] (CCNB2) to [out=180,in=0] (CCNB);
 \draw[-] (C1B) to (CCNB);
 \draw[-] (C0B) to [out=135,in=0] (CCNB);
 \draw(CCNB) to [out=180,in=235,looseness=1.3] node[above,pos=0]{$\scriptstyle\aaa23$} (D1);
 
 \draw[-] (C2B) to [out=225,in=0] (CCB2) to [out=180,in=0] (CCB);
 \draw[-] (C0B) to (CCB);
 \draw[-] (C1B) to [out=225,in=0] (CCB);
 \draw(CCB) to [out=180,in=270,looseness=1.3] node[below,pos=0]{$\scriptstyle\aaa13$} (D2B);
 
 \draw[-] (D1) to [out=35,in=270] (CDNB2);
 \draw[-] (D2B) to [out=25,in=270] (CDNB2);
 \draw[-] (CDNB2) to [out=90,in=270] (CDNB);
 \draw[-] (D1B) to (CDNB);
 \draw[-] (D0B) to [out=25,in=270] (CDNB);
 \draw(CDNB) to [out=90,in=145,looseness=1.3] node[right=-2,pos=0]{$\scriptstyle\aaa34$} (A1);
 
 \draw[-] (D2B) to [out=135,in=270] (CDB2) to [out=90,in=270] (CDB);
 \draw[-] (D0B) to (CDB);
 \draw[-] (D1B) to [out=135,in=270] (CDB);
 \draw(CDB) to [out=90,in=180,looseness=1.3] node[left=-2,pos=0]{$\scriptstyle\aaa24$} (A2B);

 \draw[-] (3) to [out=200,in=90] (CT1);
 \draw[-] (2) to [out=240,in=90] (CT1);
 \draw[-] (1) to [out=240,in=90] (CT1);
 \draw[-] (0) to [out=190,in=90] (CT0);
 \draw[-] (CT0) to [out=270,in=90] (CT1);
 \draw(CT1) to node[left=-2,pos=0]{$\scriptstyle\aaa24$}(A2B);
 
 \draw[-] (3) to [out=170,in=90] (CT2);
 \draw[-] (2) to [out=225,in=90] (CT2);
 \draw[-] (1) to [out=225,in=90] (CT2);
 \draw[-] (0) to [out=180,in=90] (CT3) to [out=270,in=90] (CT2);
 \draw(CT2) to [out=270,in=125,looseness=1.5] node[left=-2,pos=0]{$\scriptstyle\aaa34$}(A1);
 
 \draw[-] (2) to [out=315,in=120] (CT4);
 \draw[-] (3) to [out=10,in=120] (CT4);
 \draw[-] (1) to [out=340,in=120] (CT4);
 \draw[-] (0) to [out=0,in=120,looseness=1.7] (CT4);
 \draw(CT4) to [out=300,in=75] node[right=-2,pos=0]{$\scriptstyle\aaa14$} (B1);
 \end{tikzpicture}
 \caption{Automaton $\mathcal{A}_4$ for the language $\Pi(R_4)$.}
 \end{figure}
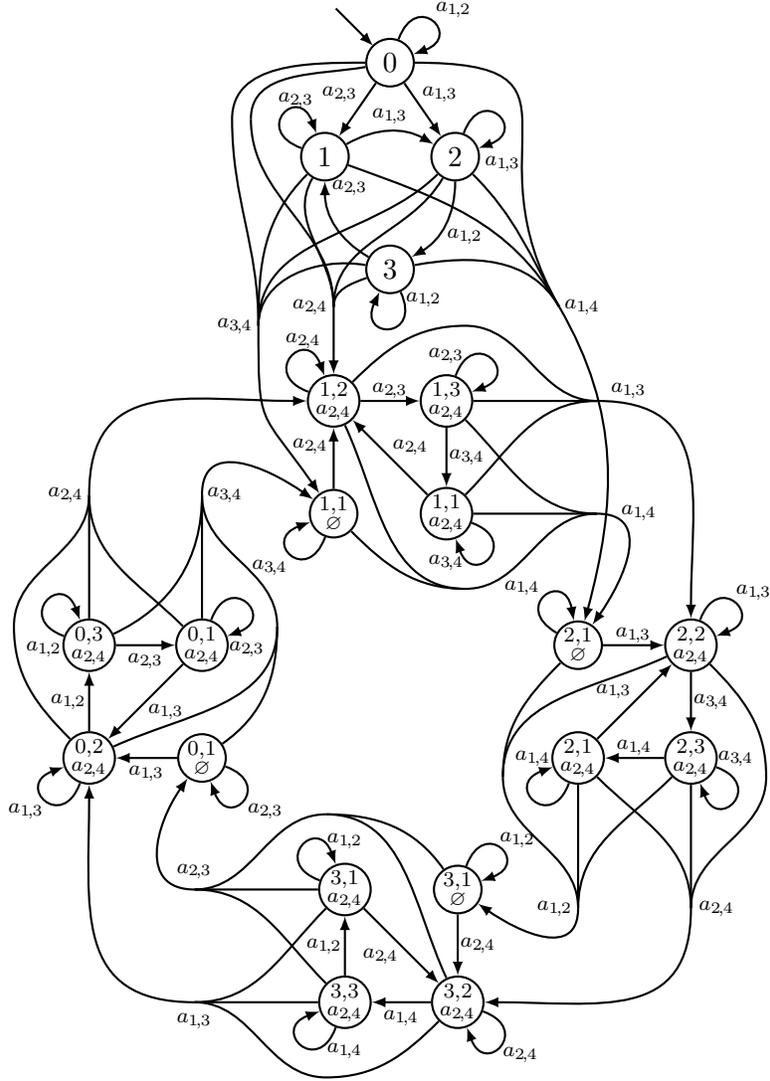
 
 \section*{Further work}

Using syntactical characterization of rotating words we have proved that the language of $\nn$-rotating words is regular.
For $W$ a finite state automaton, we denote by $L(W)$ the language recognized by $W$.
Following \cite{Campbell} and \cite{Epstein1992} we have the following definition:

\begin{defi}
Let $M$ be a monoid. 
A \emph{right automatic structure}, resp. \emph{left automatic structure}, on $M$ consists of a set $A$ of generators of $M$, a finite state automaon $W$ over $A$, and finite state automata~$M_x$ over $(A,A)$, for $x\in A\cup\{\varepsilon\}$, satisfying the following conditions:

$(i)$ the map $\pi:L(W)\to M$ is surjective.

$(ii)$ for $x\in A\cup\{\varepsilon\}$, we have $(u,v)\in L(M_x)$ if and only if $\overline{ux}=\overline{y}$, resp.  $\overline{xu}=\overline{y}$, and both $\uu$ and $\vv$ are elements of $L(W)$.

\end{defi}

Naturally we can ask if the rotating normal form provides an left or right automatic structure for the dual braid monoid $\BKL\nn$. Such a result may needs to obtain some syntactical properties on the word $\xx\,\ww$ or $\ww\,\xx$ where $\ww$ is an $\nn$-rotating word and $\xx$ is an $\SPBKL\nn$-generator. 
At this time no result have been obtained in this direction.
\bibliographystyle{ams-pln}
\bibliography{biblio}

\end{document}